\newtheorem{defi}{Definition}[section]
\newtheorem{sinobservacion}[defi]{}
\newenvironment{sinob}{\begin{sinobservacion} \rm}{\end{sinobservacion} }
\newtheorem{coro}[defi]{Corollary}
\newtheorem{notation}[defi]{Notation}
\newtheorem{lem}[defi]{Lemma}
\newtheorem{rem}[defi]{Remark}
\newtheorem{prop}[defi]{Proposition}
\newtheorem{teo}[defi]{Theorem}
\newtheorem{ej}[defi]{Example}
\newcommand{\A}{\mathcal{A}}
\newcommand{\C}{\mathbf{C}}
\newcommand{\K}{\mathbf{K}}
\newcommand{\D}{\mathbf{D}}
\newcommand{\benu}{\begin{enumerate}}
\newcommand{\enu}{\end{enumerate}}
\begin{document}
\title[The Auslander-Reiten quiver of the category of $m-$periodic complexes]
{The Auslander-Reiten quiver of the category of $m-$periodic complexes}
\author[C. Chaio]{Claudia Chaio}
\address[Claudia Chaio]{Centro Marplatense de Investigaciones Matem\'aticas, Facultad de Ciencias Exactas y
Naturales, Funes 3350, Universidad Nacional de Mar del Plata and CONICET,  Mar del
Plata, 7600, Argentina}
\email{claudia.chaio@gmail.com}

\author[A. Gonz\'alez Chaio]{Alfredo Gonz\'alez Chaio}
\address[Alfredo Gonz\'alez Chaio]{Centro Marplatense de Investigaciones Matem\'aticas, Facultad de Ciencias Exactas y
Naturales, Funes 3350, Universidad Nacional de Mar del Plata,  Mar del
Plata, 7600, Argentina}
\email{agonzalezchaio@gmail.com}

\author[I. Pratti]{Isabel Pratti}
\address[Isabel Pratti]{Centro Marplatense de Investigaciones Matem\'aticas, Facultad de Ciencias Exactas y
Naturales, Funes 3350, Universidad Nacional de Mar del Plata,  Mar del
Plata, 7600, Argentina}
\email{nilpratti@gmail.com}

\author[M. J. Souto Salorio]{Mar\'ia Jos\'e Souto Salorio}
\address[Mar\'ia Jos\'e Souto Salorio]{Campus Industrial de Ferrol, Facultade de Inform\'atica, Universidade da Coru\~{n}a, CP 15071, A Coru\~{n}a, Spain}
{\email{maria.souto.salorio@udc.es}

\date{\today}

\keywords{Complexes; Irreducible; Periodic category; Golois covering.}

\subjclass[2020]{16G70, 16G20, 16E10}

\maketitle

$$\text{Dedicated to Raymundo Bautista on his eightieth  birthday}$$

%Corresponding Author: Claudia Chaio.
%
%N/A (Data Availability Statement not applicable).

\begin{abstract}  Let $\mathcal{A}$  be an additive  $k-$category and $\mathbf{C}_{\equiv m}(\mathcal{A})$ be the category of $m-$periodic objects.  For any integer $m>1$, we study  conditions under which  the compression functor ${\mathcal F}_m: \C^{b}(\mathcal{A}) \rightarrow \mathbf{C}_{\equiv m}(\mathcal{A})$ preserves or reflects irreducible morphisms. Moreover, we  find  sufficient  conditions for the functor   ${\mathcal F}_m $ to be a Galois $G$-covering in the sense of \cite{BL}. If in addition $\A$ is a dualizing category  and $\mbox{mod}\, \A$ has finite global dimension then $\mathbf{C}_{\equiv m}(\mathcal{A})$ has almost split sequences. In particular, for a finite dimensional algebra $A$ with finite strong global dimension we determine how to build the Auslander-Reiten quiver of the category  $\C_{\equiv m}(\mbox{proj}\, A)$. Furthermore,  we study the behavior of sectional paths in $\C_{\equiv m}(\mbox{proj}\, A)$, whenever $A$ is any finite dimensional $k-$algebra over a field $k$.
\end{abstract}

%\maketitle

\section*{Introduction}

The category of $m-$periodic complexes, for $m \geq  2,$ has independent interest by itself  but also it is related  to the  orbit categories of the bounded derived category.  The connection  between the orbit categories of the derived categories of an algebra and the $m-$periodic complexes was established  by the compression functor.

In  \cite{PX}, L. Peng and  J. Xiao  showed a relationship  between  these two categories. In particular, the mentioned authors dealt with the $2-$periodic chain complexes over a finite dimensional  hereditary algebra $H$ and proved that the  root category  $\D^b(\mbox{mod}\, H)/[2]$,  studied by D. Happel in \cite{H}, inherits a triangulated structure from $\D^b(\mbox{mod}\, H).$

The orbit category of a triangulated category is not necessarily triangulated itself. However in 2005, B. Keller  devised a triangulated hull for certain orbit categories of the derived categories.  For  any  finite dimensional algebra $A$ of  finite global dimension,  the author defined a triangulated functor from the derived category to the triangulated hull  in such a way that the orbit category $\D^b(\mbox{mod}\, A)/[m]$  embeds into its  triangulated hull. The embedding is an equivalence in case that the orbit category admits a canonical triangle structure, see \cite{Ke}.

Several authors  studied the above problem and found that the category of $m-$periodic complexes is a good framework to deal with.  In particular, the natural functor from the category of bounded complexes to the $m-$periodic category, called the compression functor, plays an important role because when it is dense is precisely the case that the orbit category inherits a triangulated structure from the derived category $\D^b(\mbox{mod} \, A).$

For an additive category ${\mathcal A}$,  the category of $m-$periodic  complexes $\C_{\equiv m} ({\mathcal A})$, together with the relative homotopy category $\K_{\equiv m}({\mathcal A})$ and its derived category $\D_{\equiv m}({\mathcal A})$  in case that ${\mathcal A}$ is abelian, became to have more  and more interest by itself and also related to the compression functor, see between others \cite{F}, \cite{Sa}, \cite{S}, \cite{S-18} and \cite{Z}.

It is known that if ${\mathcal A}$ is  a hereditary additive category then the compression functor is dense. Moreover, the result is also true in case that  the algebra $A$ is derived equivalent to  a hereditary category, see \cite{Ke}, \cite{Sa}, \cite{S-18}. We observe that if this is the case, then  the  essential image of the compression functor is   equivalent to the category of $m-$periodic complexes.

Some  properties  concerning the Auslander–Reiten theory for the category of $m-$periodic complexes  when $H$ is a hereditary algebra were studied in \cite{B}, \cite{Ch},  \cite{CD} and \cite{RZ}.
In case that $H$ is a hereditary algebra, the almost split sequences   of   $\C_{\equiv m}(\mbox{proj}\, H)$  were studied in \cite{CD} considering the almost split sequences in the category of complexes over the hereditary algebra and the fact that the  compression functor is dense in that case.
Recently  in \cite{Ch},  the author got the Auslander-Reiten quiver for $\C_{\equiv m}(\mbox{proj}\, H)$ using  an alternative  approach.

In the general context of linear $k-$categories, the notion of Galois $G-$covering  was defined  in \cite{BL}  and requires the density condition.
The authors showed that a Galois $G$-covering between Krull-Schmidt additive categories preserves  almost split sequences. Inspired by this fact,  a natural question is under which conditions over an additive category $\A$ one can ensure that the compression functor ${\mathcal F}_m: \C^{b}(\mathcal{A}) \rightarrow \mathbf{C}_{\equiv m}(\mathcal{A})$ is  a Galois $G-$covering?

The Auslander-Reiten quiver of a category provides important information of the category by considering only those morphisms and vertices from which one can build all others. The construction of this quiver   is often hard and we need  to manage the irreducible morphisms and indecomposable objects of our category. The existence of Auslander-Reiten sequences which  allow us to construct the quiver is a previous step to have in mind.

For $A$  a  finite-dimensional $k$-algebra of finite global dimension,  it was proved in \cite{F}, Theorem 2.10,  that the triangulated hull of the orbit  category of $\D^b(\mbox{mod}\, A)/[m]$ admits Auslander-Reiten triangles.

On the other hand, the triangulated hull $R$  is  triangle equivalent to $\K_{\equiv m}(\mbox{proj}\, A)$ and also to $\D_{\equiv m}(\mbox{mod}\, A)$ (see Theorem 2.10 in  \cite{Z}). Therefore  $\K_{\equiv m}(\mbox{proj}\, A)$  admits  Auslander-Reiten triangles. We point out that this fact is equivalent to the existence of   almost split sequences in the category $\C_{\equiv m}(\mbox{proj}\, A)$ because the relative homotopy category is the stable category of $\C_{\equiv m}(\mbox{proj}\, A)$, see \cite{R}.

A generalization of  finite dimensional $k-$algebras is the notion of dualizing $k-$categories  introduced by M. Auslander and I. Reiten in \cite{AR}, where they proved that  the category $\mbox{proj}\, A$ is an example of a dualizing category. From \cite{BSZ} we know that  if $\mathcal{A}$ is a dualizing category, then the category $\mathbf{C}_{[1,n]}(\mathcal{A})$ has almost split sequences. Moreover, if $\mathrm{mod}\, \mathcal{A}$  has finite global dimension then $\mathbf{C}^{b}( \mathcal{A})$  has almost split sequences (see \cite{HZ}). Therefore, if we know that the compression functor ${\mathcal F}_m$  is  a Galois $G-$covering, the existence of almost split sequences in $\C^{b}(\mathcal{A}) $  guarantee the existence of almost split sequences in $\mathbf{C}_{\equiv m}(\mathcal{A}).$

In this work, we are interested on the construction of the so called Auslander-Reiten quiver  of the category of $m-$periodic complexes over a finite dimensional algebra with finite strong  global dimension.
The category of complexes of fixed size plays an important role in our arguments and we reserve special attention to the restriction of the compression functor to this category.
In the particular case that $A$ is an iterated tilted algebra,  we  deduce  that  the compression functor  is a Galois $G-$covering in the sense of \cite{BL}. Then we use this fact to obtain the Auslander-Reiten quiver of the category  $\C_{\equiv m}(\mbox{proj}\, A)$.
\vspace{.1in}

The work is organized as follows. We dedicate the first section to recall basic results on $m-$periodic complexes over an additive category ${\mathcal A}$ and also some properties of the compression functor  ${\mathcal F}_m :\C^b ({\mathcal A}) \rightarrow  \C_{\equiv m} ({\mathcal A})$, for any positive integer $m$. Furthermore, we collect  a few results from \cite{BL} concerning Galois $G-$coverings, which are useful to prove some statements in the next section. In particular,  we study properties of the compression functor ${\mathcal F}_m :\C^b (\mbox{proj}\,A) \rightarrow  \C_{\equiv m} (\mbox{proj}\,A)$ when $A$ is a finite dimensional algebra of finite global dimension.
In Section 2, we prove some  facts of the compression functor respect to indecomposable objects, irreducible morphisms and density, whenever ${\mathcal A}$ is an additive category.   Section 3,  is devoted to show conditions over $\A$ that allows us to ensure that  the compression functor is a Galois $G-$covering in the sense of \cite{BL}. In the end of this section,  we study  the dualizing case and prove that under extra hypothesis, the category of $m-$periodic complexes has almost split sequences  and that we can get them from almost split sequences of  $\C^b ({\mathcal A})$. More precisely, we can get them  from almost split sequences of  $\C_{[1,n]} ({\mathcal A})$.  In particular, we get the result for $\A= \mbox{proj}\, A$ in case that $A$ is an iterated tilted algebra. In Section 4, we show how to construct the Auslander-Reiten quiver of  $\C_{\equiv m} (\mbox{proj}\, A)$, for $A$ a finite dimensional algebra with finite strong global dimension. Finally, in the last section, we study the behaviour of sectional paths in $\C_{\equiv m} (\mbox{proj}\, A)$, whenever $A$ is any finite dimensional algebra.
\vspace{.1in}

\thanks {The  first three named authors thankfully acknowledge partial support from EXA/1057/22  from Universidad Nacional de Mar del Plata, Argentina. The fourth author thanks support from Proyecto del Ministerio espa\~nol de Ciencia e Innovaci\'on (MICINN) PID2020-113230RB-C21. The first author is a researcher from CONICET.}

\section{Preliminaries}

Let  $\mathcal{A}$ be an additive $k-$category over a commutative ring $k$
and  ${\mathcal{P}}$  be  the full subcategory consisting of all the  projective objects in $\mathcal{A}$. By Mod-$\A$ we denote the category which consists of all additive functors from $\A^{op}$ to the category of abelian groups. The Yoneda functor allows us  to see $\A$ as a full subcategory of the cocomplete abelian category Mod-$\A.$

We denote by $ \C(\mathcal{A})$ the additive $k-$category which consists of unbounded chain complexes and by $\C^b({\mathcal A})$ the full subcategory of $ \C(\mathcal{A})$ which  consists of the bounded complexes.

Let $A$ be a finite dimensional algebra and $\mbox{mod}\, A$ be the  category of finitely generated $A$-modules. We denote by $\mbox{proj}\, A$ the full subcategory of $\mbox{mod}\, A$ whose objects are the projective $A$-modules.

For the category of finitely generated modules over a finite dimensional algebra of finite global dimension, $\mbox{mod}\, A$, we  consider the Frobenius  category $\C^b({\mbox{proj}\, A})$ and the homotopy category $\K^b({\mbox{proj}\, A})$ which is triangle equivalent to the stable category.

With abuse of notation and by the triangle equivalences  given in \cite{H}, we identify the following triangulated categories
$$\underline{\C^b(\mbox{proj}\, A)}\simeq  \K^b(\mbox{proj}\, A) \simeq \D^b(\mbox{mod}\, A).$$

\begin{sinob}  {\bf The  $m-$periodic complexes.} Let $m$ be a positive integer.

By definition an $m-$periodic complex over $\mathcal{A}$ is a complex $X =(X^{i}, d_{X}^{i})$ such that $X^{i} = X^{j}$ and $d_{X}^{i}=d_{X}^{j}$ for all $i, j \in \mathbb{Z}$ where $i\equiv_{m} j$, that means the integer numbers $i$ and $j$ are congruent module $m.$ The objects $X^{i}$ are in $\mathcal{A}$ and the differentials $d_{X}^{i} : X^i \rightarrow X^{i+1}$ are morphisms in $\mathcal{A}$ such that $d_{X}^{i+1} d_{X}^{i} =0$.

If $X =(X^{i}, d_{X}^{i})$ and $Y =(Y^{i}, d_{Y}^{i})$ are two $m-$periodic complexes, a morphism $f: X \rightarrow Y$  is a sequence of morphisms $f^{i} : X^{i} \rightarrow Y^{i}$  of $\mathcal{A}$ such that $f^{i}=f^{j}$ for all $i, j \in \mathbb{Z}$ with $i\equiv_{m} j$ and such that
$d_i f^{i+1} = f^{i} d_{i+1}$  for all $i \in \mathbb{Z}$. We compose the morphisms from right to left.

We denote by $\mathbf{C}_{\equiv m}(\mathcal{A})$   the category of $m$-periodic complexes over ${\mathcal A}.$
The category $\mathbf{C}_{\equiv m}(\mathcal{A})$  is a Frobenius category and the stable category $\underline{\C_{\equiv m}({\mathcal A})}$ coincides with $\K_{\equiv m}({\mathcal A})$  the relative homotopy category   of  $m-$periodic complexes. If we consider the restriction to the full subcategory ${\mathcal{P}}$ of $\mathcal{A}$,  we get the category  $\C_{\equiv m}({\mathcal{P}})$ and also the category  $\K_{\equiv m}({\mathcal{P}})$  (see \cite{PX}).

For each integer $s$ there is a canonical shift functor $[s]$ on  $\C_{\equiv m}({\mathcal A})$ which maps an $m-$periodic complex $(X,d_X)$  to $(X[s],\delta)$ where $\delta ^i=(-1)^sd_X^{s+i}.$

In the particular case that  $\mathcal{A}$ is the category of finitely generated modules over a finite dimensional algebra of finite global dimension, $\mbox{mod}\, A$, we can consider the  $m-$periodic derived category of $A$, $\D_{\equiv m}(\mbox{mod}\, A),$  that is, the localization of the relative homotopy category with respect to quasi-isomorphisms. Moreover, the categories $\K_{\equiv m}(\mbox{proj}\, A)$ and $\D_{\equiv m}(\mbox{mod}\, A)$ are  triangulated  equivalent and the $m-$periodic derived category is invariant under derived equivalences (see \cite{Z} and \cite{S-18}).
\end{sinob}

With abuse of notation, throughout this paper, for a finite dimensional algebra $A$ with finite global dimension we identify  the following triangulated categories:

$$\underline{\C_{\equiv m}(\mbox{proj}\, A)}\simeq  \K_{\equiv m}(\mbox{proj}\, A)\simeq \D_{\equiv m}(\mbox{mod}\, A).$$

\begin{sinob}{\bf The compression functor.}
By \cite{PX}, we know that  there is an exact functor, named the compression functor, ${\mathcal F}_m: \C^b (\mathcal{A}) \rightarrow  \C_{\equiv m} (\mathcal{A})$ that induces an exact functor between the homotopy categories $\K^b (\mathcal{A})$ and $\K_{\equiv m} (\mathcal{A})$. Moreover, it also induces an exact functor from the homotopy category $\K^b({\mathcal{P}})$ and the relative homotopy category $\K_{\equiv m}({\mathcal{P}}).$

With abuse of notation, we also denote by ${\mathcal F}_m $ the compression functor between the homotopy categories.
\vspace{.05in}

We recall the definition of the compression functor ${\mathcal F}_m: \mathbf{C^{b}}(\mathcal{A}) \rightarrow \mathbf{C}_{\equiv m}(\mathcal{A})$ given in \cite{PX}.

\begin{defi} For $X =(X^{i}, d_{X}^{i}) $ a complex in $\mathbf{C^{b}}(\mathcal{A})$, we set ${\mathcal F}_{m}(X) =({\mathcal F}_{m}(X)^{i}, d_{{\mathcal F}_{m}(X)}^{i})$ where ${\mathcal F}_{m}(X)^{i}= \oplus_{_{t \in \mathbb{Z}}} X ^{i+tm}$ and  $d_{{\mathcal F}_{m}(X)}^{i}= (d_{st}^{i})_{s, t \in \mathbb{Z}}$ such that $d_{st}^{i}: X^{i+sm} \rightarrow X^{(i+1)+tm}$ with $d_{st}^{i}=0$ for $s \neq t$ and $d_{ss}^{i}=d_{X}^{i+ms}$ for all $s\in \mathbb{Z}$.

For a morphism $f=(f^{i}):X \rightarrow Y$ in $\mathbf{C^{b}}(\mathcal{A})$, we set ${\mathcal F}_{m}(f)=(g^{i}): \  {\mathcal F}_{m}(X) \rightarrow {\mathcal F}_{m}(Y)$ where $g^{i}=(g_{st}^{i})$ such that $g_{st}^{i}: X^{i+sm} \rightarrow Y^{i+tm}$  with $g_{st}^{i}=0$ for $s \neq t$ and $g_{ss}^{i}=f^{i+ms}$ for all $s\in \mathbb{Z}$.
\end{defi}

In a more general context,  if $\mathcal{A}$ has coproducts then we can consider  the  compression  functor $\Delta: \C({\mathcal A})\rightarrow \C_{\equiv m}({\mathcal A})$  given by
 $\Delta(X)=\oplus_ {j\in {\mathbb Z}} X[mj ]$ for a complex $(X, d_X)\, \in \C({\mathcal A}).$ When we restrict to bounded complexes we get that
   ${\mathcal F}_m : \C^b ({\mathcal A})\rightarrow  \C ({\mathcal A})\rightarrow \C_{\equiv m}({\mathcal A})$ is the composition of the  inclusion functor     $\C^b({\mathcal A})\rightarrow  \C({\mathcal A})$ with $\Delta$.

We observe that  the inclusion functor ${\bigtriangledown}: \C_{\equiv m}({\mathcal A})\rightarrow  \C({\mathcal A})$ is the right adjoint of $\Delta.$ That means,
$$ \mbox{Hom}_{ \C_{\equiv m}({\mathcal A})}(\Delta(X), \Delta (Y))
 \simeq    \mbox{Hom}_ {\C ({\mathcal A})} (X, {\bigtriangledown} \Delta (Y)).$$
\vspace{.05in}

Now,  we focus on the case that $\A=\mbox{proj}\, A$  for   $A$  a finite dimensional algebra of  finite global dimension.

The following result shows the link between the compression functor and the orbit categories, see \cite{Ke}.

\begin{teo}\label{tritodas}  Let $A$ be a finite dimensional algebra with finite global dimension  and $m\geq 1$. The compression functor ${\mathcal F}_m: \  \K^b (\emph{proj}\, A)\rightarrow    \K_{\equiv m}(\emph{proj}\, A)$ yields   an embedding  from its essential image $\K^b (\emph{proj}\, A)/[m]$ into its triangulated hull $\K_{\equiv m}(\emph{proj}\, A).$

Moreover, the following statements are equivalent.
\begin{enumerate}
\item[(a)] The category $ \K^b (\emph{proj}\, A)/[m]$  has a canonical  triangulated structure.
\item[(b)] The embedding $i: \K^b (\emph{proj}\, A)/[m]{\longrightarrow }\K_{\equiv m}(\emph{proj}\, A)$ is an equivalence.
\item[(c)] The compression functor ${\mathcal F}_m: \  \K^b (\emph{proj}\, A)\rightarrow    \K_{\equiv m}(\emph{proj}\, A)$ is dense.
\end{enumerate}
Furthermore, if one of the above Statements holds for one choice of $m$ then it holds for all positive integer $m.$
\end{teo}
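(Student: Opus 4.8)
The plan is to factor $\mathcal{F}_m$ through the orbit category, identify it with the canonical functor to the triangulated hull, and then read off the three equivalences, leaving the independence of $m$ for last.

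First I would verify directly from the definition of the compression functor that $\mathcal{F}_m(X[m]) \cong \mathcal{F}_m(X)$ naturally: compressing $X[m]$ produces $\oplus_{t} X^{i+m+tm}$, which is merely a reindexing of $\oplus_{t} X^{i+tm}$, so the differentials and the action on morphisms match. Hence $\mathcal{F}_m$ coequalizes $[m]$ with the identity and factors as $\K^b(\mbox{proj}\, A) \to \K^b(\mbox{proj}\, A)/[m] \xrightarrow{\ i\ } \K_{\equiv m}(\mbox{proj}\, A)$. To see that $i$ is fully faithful — the asserted embedding — I would compute morphism spaces through the adjunction $\Delta \dashv {\bigtriangledown}$ recalled above. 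For bounded $X,Y$ one has ${\bigtriangledown}\Delta Y = \oplus_{j} Y[mj]$, so at the chain level $\mbox{Hom}_{\C_{\equiv m}}(\mathcal{F}_m X, \mathcal{F}_m Y) \cong \oplus_{j} \mbox{Hom}_{\C}(X, Y[mj])$, a finite sum since $X$ and $Y$ are bounded. The finite global dimension hypothesis, through the identifications $\underline{\C^b(\mbox{proj}\, A)} \simeq \K^b(\mbox{proj}\, A) \simeq \D^b(\mbox{mod}\, A)$ and $\underline{\C_{\equiv m}(\mbox{proj}\, A)} \simeq \K_{\equiv m}(\mbox{proj}\, A)$, lets this descend to the stable (homotopy) level, giving $\mbox{Hom}_{\K_{\equiv m}}(\mathcal{F}_m X, \mathcal{F}_m Y) \cong \oplus_{j} \mbox{Hom}_{\K^b}(X, Y[mj]) = \mbox{Hom}_{\K^b/[m]}(X,Y)$, which is exactly full faithfulness of $i$.

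Next I would dispatch the equivalences. The implications (b)$\Leftrightarrow$(c) are formal: a fully faithful functor is an equivalence precisely when it is dense, and the essential image of $\mathcal{F}_m$ is $\K^b(\mbox{proj}\, A)/[m]$, so density of $\mathcal{F}_m$ coincides with essential surjectivity of $i$. For (a)$\Leftrightarrow$(b) I would invoke the universal property of the triangulated hull from \cite{Ke}: the hull is the universal triangulated category receiving a triangle functor from $\D^b(\mbox{mod}\, A)$ that identifies $[m]$ with the identity. If $\K^b(\mbox{proj}\, A)/[m]$ carries a canonical triangulated structure, the projection is such a triangle functor and universality forces $i$ to be an equivalence; conversely, an equivalence $i$ transports the triangulated structure of $\K_{\equiv m}(\mbox{proj}\, A)$ back to the orbit category, and compatibility with the projection makes it the canonical one.

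The hard part will be the final clause, independence of the choice of $m$. Here I would pass to dg enhancements and replace the orbit category by Keller's dg orbit category, whose triangulated hull is $\mbox{per}(B)$ for an explicit dg algebra $B$ built from the $[m]$-orbit of $A$; the canonical functor is an equivalence onto its closure exactly when $B$ is quasi-isomorphic to its zeroth cohomology. The point is to show that this cohomological condition does not depend on $m$: it is equivalent to an intrinsic property of $\D^b(\mbox{mod}\, A)$ — that $A$ be derived equivalent to a hereditary abelian category, which as recalled in the introduction is what forces density — and this property makes no reference to $m$. Once density for a single $m$ has been translated into this $m$-free condition, density for every positive integer $m$ follows at once. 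Controlling the dg model and checking that the relevant self-extension groups vanish uniformly is the delicate step; everything else is formal.
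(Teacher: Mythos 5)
Your treatment of the embedding and of the equivalences (a)$\Leftrightarrow$(b)$\Leftrightarrow$(c) is essentially correct, and it is more self-contained than the paper's proof, which at these points simply cites \cite[Theorem 4.3]{S-18} and \cite{Z} for the embedding and for the identification of the triangulated hull with $\K_{\equiv m}(\mbox{proj}\,A)$, and then notes, exactly as you do, that (b)$\Leftrightarrow$(c) is formal for a fully faithful functor. Your Hom-computation via the adjunction $\Delta\dashv\bigtriangledown$ is the same argument the paper later records as Proposition \ref{precov}. One imprecision: the descent of the isomorphism $\mbox{Hom}_{\C_{\equiv m}}(\mathcal{F}_m X,\mathcal{F}_m Y)\cong\oplus_j\mbox{Hom}_{\C^b}(X,Y[mj])$ from the chain level to the homotopy level is not a consequence of finite global dimension or of the stable-category identifications; it holds because nullhomotopies decompose along the same direct sum as morphisms do (this is the precovering argument at the level of $\K$, cf. \cite[Proposition 3.9]{Sa}). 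The finite global dimension hypothesis enters elsewhere, namely in guaranteeing $\K^b(\mbox{proj}\,A)\simeq\D^b(\mbox{mod}\,A)$ so that Keller's and Zhao's results apply.

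The genuine gap is in the final clause, the independence of $m$. Your plan reduces it to the claim that density (equivalently, triangulatedness of the orbit category) for a \emph{single} $m$ is equivalent to an $m$-free property of $A$, namely being derived equivalent to a hereditary abelian category. The implication you would need, ``dense for one $m$ implies $A$ is piecewise hereditary,'' is precisely the hard theorem here: the paper's introduction (and \cite{Ke}, \cite{S-18}) only provides the easy converse, that piecewise hereditary algebras have dense compression functors for every $m$. Your dg-orbit-category sketch names the delicate step (``checking that the relevant self-extension groups vanish uniformly'') but does not carry it out, and it is not formal; it amounts to re-proving the result of Stai that the paper cites at exactly this point (\cite[Theorem 1]{S}, resting ultimately on \cite[Proposition 5.4, Theorem 5.5]{S-18} and the Happel--Zacharia characterization of piecewise hereditary algebras by finite strong global dimension). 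As written, your proposal asserts the key $m$-free equivalence rather than proving or precisely citing it, so the last statement of the theorem is not established.
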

\begin{proof} By \cite[Theorem 4.3]{S-18} and also \cite{Z}, we know that  the compression functor provides   an embedding $i$  from its essential image $\K^b (\mbox{proj}\, A)/[m]$ to its triangulated hull $\K_{\equiv m}(\mbox{proj}\, A)$.
%Moreover, this fact  was  also studied in \cite{Z}.
 Moreover, by  \cite{Z}, we know that the triangulated hull of $\K^b (\mbox{proj}\, A)/[m]$  is triangle equivalent to the category $\K_{\equiv m}(\mbox{proj}\, A).$  Then we get that Statement (a) is equivalent to Statement (b).

On the other hand, we  have  that the embedding $i: \K^b (\mbox{proj}\, A)/[m]{\longrightarrow }\K_{\equiv m}(\mbox{proj}\, A)$ is an equivalence   if and only if $i$   is dense. Moreover,  $i$   is dense  if and only if
 the compression functor ${\mathcal F}_m$ is dense. Then  Statement (b) is equivalent to Statement (c).

By  \cite[Theorem 1]{S},  we know that if the orbit category $\K^b (\mbox{proj}\, A)/[m]$ is triangulated for one choice of $m$ then it is triangulated for each $m$, proving the result.
\end{proof}
\end{sinob}

\begin{sinob}{\bf Galois $G$-covering.} \label{pre-cov}
We recall  several notions and results stated in \cite{BL} that shall be fundamental to prove some results throughout the next sections. Here  a linear category stands for an additive skeletal $k-$linear category.
\vspace{.05in}

We recall  the definition of $G$-precovering given in \cite{BL}, (also see \cite[1.6]{A}).
\vspace{.05in}

\begin{defi} \label{pre-covering} Let $\mathcal{A}$ and $\mathcal{B}$ be linear categories with $G$ a group acting on $\mathcal{A}$. A functor
$F : \mathcal{A} \rightarrow  \mathcal{B}$ is called a $G$-precovering provided that $F$ has a $G$-stabilizer $\delta$ such that, for
any $X, Y \in \mathcal{A}_{0}$, the following two maps are isomorphisms:

$$F_{X,Y} :\oplus \mathcal{A}(X, g . Y )\rightarrow \mathcal{B}(F(X), F(Y)): (u_g)_{g \in G} \rightarrow \Sigma_{g \in G}F(u_g)\delta_{(g, Y)}.$$

$$F^{X,Y} :\oplus \mathcal{A}(g .X, Y )\rightarrow  \mathcal{B}(F(X), F(Y)): (v_g)_{g \in G} \rightarrow \Sigma_{g \in G}\delta^{-1}_{(g, X)} F(v_g).$$
\end{defi}

In the above definition, as observed by H. Asashiba in \cite[1.6]{A}, it is sufficient to require that all  $F_{X,Y}$ or all $F^{X,Y}$ be isomorphisms.
\vspace{.05in}

Next, we state the definition of an irreducible morphism in an additive category.

\begin{defi} \label{irred} Let $\mathcal{A}$ be an additive category
and $f:X\rightarrow Y$ a morphism in $\mathcal{A}$. The morphism
$f$ is said to be a {\em section} (respectively, {\em a
retraction}) if and only if there is a morphism $h:Y \rightarrow
X$ such that $hf = 1_X$ (respectively,  $fh = 1_Y$). Whenever one of
these conditions hold,  $f$ is said to be a {\em split} morphism.

The morphism $f$  is said to be {\em irreducible} if
it is not a split morphism and, for any factorization $f = hg$, we
have that either $g$ is a section or $h$ is a retraction.
\end{defi}

The next result is fundamental to prove that the compression functor ${\mathcal F}_m$ reflects irreducible morphisms.  Although the isomorphisms that we use to prove the result are well-known and moreover, since we can find a proof in Proposition 3.9 (2), in  the pre-print  \cite{Sa}, for the convenience of the reader, bellow we state and present a proof that the functor ${\mathcal F}_m$ is a $G-$precovering with $G$ the cyclic group generated by  $[m]$.
\vspace{.1in}

 First observe that   the cyclic group generated by $[m],$   acts admissibly on $ \C^b ({\mathcal A}).$  In fact,
\begin{enumerate}
\item the action is free because $X\neq X[mj]$ for any indecomposable complex   in $ \C^b ({\mathcal A})$ and
 \item it  is locally bounded since  $\mbox{Hom}(X,Y[mj])=0$ for all but finitely many integers $j$ and for any indecomposable complexes $X$ and $Y$ in $\C^b ({\mathcal A}).$
 \end{enumerate}

 \begin{prop} \label{precov} Let ${\mathcal A}$ be an additive $k-$category. The functor ${\mathcal F}_m : \C^b ({\mathcal A}) \rightarrow \C_{\equiv m}({\mathcal A})$ is a $G-$precovering  with $G$ the cyclic group generated by $[m]$ and  $\delta =id$ a $G-$stabilizer.
\end{prop}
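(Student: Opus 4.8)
The plan is to unwind the definition of $G$-precovering (Definition \ref{pre-covering}) with $G = \langle [m] \rangle$ and $\delta = \mathrm{id}$, and verify directly that the map
$$({\mathcal F}_m)_{X,Y} : \bigoplus_{j \in \mathbb{Z}} \C^b({\mathcal A})(X, X[mj]) \To \C_{\equiv m}({\mathcal A})({\mathcal F}_m(X), {\mathcal F}_m(Y)), \quad (u_j)_j \mapsto \sum_{j} {\mathcal F}_m(u_j)$$
is an isomorphism for all $X, Y \in \C^b({\mathcal A})$. By the remark of Asashiba recorded after Definition \ref{pre-covering}, it suffices to treat the maps $({\mathcal F}_m)_{X,Y}$ alone (the $({\mathcal F}_m)^{X,Y}$ case then follows formally). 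Since both sides are additive in $X$ and $Y$, I would first reduce to the case where $X$ and $Y$ are indecomposable; then the local-boundedness observation just made guarantees the direct sum on the left is finite, so there are no convergence issues.

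The conceptual heart of the argument is the adjunction already displayed in the excerpt: identifying ${\mathcal F}_m$ on bounded complexes with the restriction of $\Delta$, one has
$$\mbox{Hom}_{\C_{\equiv m}({\mathcal A})}(\Delta(X), \Delta(Y)) \simeq \mbox{Hom}_{\C({\mathcal A})}(X, {\bigtriangledown}\Delta(Y)),$$
and since ${\bigtriangledown}\Delta(Y) = \bigoplus_{j} Y[mj]$ as an object of $\C({\mathcal A})$, the right-hand side is $\bigoplus_j \mbox{Hom}_{\C({\mathcal A})}(X, Y[mj])$, which for bounded $X$ agrees with $\bigoplus_j \C^b({\mathcal A})(X, Y[mj])$. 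So the desired isomorphism is essentially the hom-set adjunction, and the real work is to check that the canonical adjunction isomorphism is exactly the map $(u_j)_j \mapsto \sum_j {\mathcal F}_m(u_j)$ appearing in the precovering definition, i.e. that the abstract adjunction unit is compatible with the component-wise matrix description of ${\mathcal F}_m$ given in Definition 1.5. I would make this explicit by writing, for a morphism $h : {\mathcal F}_m(X) \to {\mathcal F}_m(Y)$ in degree $i$, its matrix $(h^i_{st})$ with $h^i_{st} : X^{i+sm} \to Y^{i+tm}$, and reading off from the $m$-periodicity constraint $h^i = h^j$ for $i \equiv_m j$ that the whole morphism is determined by the single column $s = 0$, whose entries $(h^i_{0t})_t$ assemble precisely into the family $({\mathcal F}_m(u_j))$ with $u_j \in \C^b({\mathcal A})(X, Y[mj])$.

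Concretely, I expect the proof to proceed by constructing a two-sided inverse by hand. Given $h = (h^i) : {\mathcal F}_m(X) \to {\mathcal F}_m(Y)$, define $u_j \in \C^b({\mathcal A})(X, Y[mj])$ by $(u_j)^i := h^i_{0,j}$ in each degree $i$, and then verify two things: first, that each such $u_j$ is genuinely a chain map in $\C^b({\mathcal A})$ (this follows by extracting the appropriate matrix entry of the commutativity relation $d_{{\mathcal F}_m(Y)} \circ h = h \circ d_{{\mathcal F}_m(X)}$, after accounting for the sign twist in the shift $[mj]$); and second, that only finitely many $u_j$ are nonzero, which is immediate from boundedness of $X$ and $Y$. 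Then I would check that $(u_j)_j \mapsto \sum_j {\mathcal F}_m(u_j)$ recovers $h$, and conversely that starting from a family and passing to the matrix returns the same family, using that ${\mathcal F}_m(u_j)$ contributes exactly the $t = s + j$ off-diagonal in the block matrix. The main obstacle, and the step deserving the most care, is the bookkeeping of indices and signs: matching the matrix entry $h^i_{st}$ with the correct degree and shift of the corresponding $u_{t-s}$, and confirming that $s$-periodicity of the entries of $h$ is exactly equivalent to these entries being independent of $s$ (depending only on $t-s$), so that no information is lost or double-counted. Everything else is routine additivity and the finiteness already secured by local boundedness.
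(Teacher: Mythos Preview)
Your proposal is correct and takes essentially the same approach as the paper: the paper's proof is exactly the adjunction chain you describe as the ``conceptual heart'', using boundedness of $X$ and $Y$ to ensure the direct sum is finite, and nothing more. Your further verification that this isomorphism coincides with the specific precovering map $(u_j)\mapsto\sum_j{\mathcal F}_m(u_j)$, and the alternative matrix-level construction of a two-sided inverse, go beyond what the paper records (and the reduction to indecomposables is unnecessary, since boundedness alone gives finiteness for arbitrary $X,Y$), but the core argument is identical.
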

 \begin{proof}
Consider $X$ and $Y$ in $\C^b({\mathcal A}).$  Since  the complexes are bounded then  there is a finite set $I \subset \mathbb{Z}$ such that $\mbox{Hom}_ {\C^b ({\mathcal A})} (X, Y[mj])=0$ for all integer $j\notin I$.

Using this fact we have the following  isomorphisms:

 $$
 \begin{array}{lcl}
  \mbox{Hom}_{ \C_{\equiv m}({\mathcal A})}({\mathcal F}_m (X), {\mathcal F}_m (Y))
 &=&  \mbox{Hom}_ {\C ({\mathcal A})} (X,   {\mathcal F}_m (Y))\\
 &=&   \mbox{Hom}_ {\C ({\mathcal A})} (X, \oplus_ {j\in \mathbb{Z} } Y[mj] ) \\
  &=&  \oplus_ {j\in I}  \mbox{Hom}_ {\C^b ({\mathcal A})} (X, Y[mj])\\
  &=&  \oplus_ {j\in  \mathbb{Z} }  \mbox{Hom}_ {\C^b ({\mathcal A})} (X, Y[mj])
\end{array}$$

\noindent proving that  the functor ${\mathcal F}_m$ is a $G-$precovering, (see  Definition \ref{pre-covering}).
\end{proof}

As a consequence of Proposition \ref{precov}, we get the following corollary  which is fundamental to prove the irreducibility of the morphisms in different categories.

We first observe that in \cite[Lemma 2.7]{BL} the complexes  $X, Y$ and $Z$ in $\C^b ({\mathcal A})$ are not necessarily indecomposable.

\begin{coro}\emph{(\cite{BL}, Lemma 2.7)} \label{section}   Let ${\mathcal A}$ be an additive category and   $f:X\rightarrow Y$ in $\C^b ({\mathcal A})$ be  a morphism.
The following conditions hold.
\begin{enumerate}
\item  If $g:X\rightarrow Z$ or $g:Z\rightarrow Y$  is a morphism in $\C^b ({\mathcal A}) $  then $g$ factorizes through $f$ if and only if
${\mathcal F}_m(g) $ factorizes through ${\mathcal F}_m(f).$
\item The morphism $f$ is a section, retraction, or isomorphism if an only if ${\mathcal F}_m(f) $ is a section, retraction, or isomorphism, respectively.
\end{enumerate}
\end{coro}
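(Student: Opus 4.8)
The plan is to derive the Corollary directly from Proposition \ref{precov}, which tells us that $\mathcal{F}_m$ is a $G$-precovering with $\delta = \mathrm{id}$ and $G$ the cyclic group generated by $[m]$. The key object to exploit is the isomorphism
$$ \mbox{Hom}_{\C_{\equiv m}({\mathcal A})}({\mathcal F}_m(X), {\mathcal F}_m(Y)) \simeq \oplus_{j\in\mathbb{Z}} \mbox{Hom}_{\C^b({\mathcal A})}(X, Y[mj]) $$
established in the proof of the Proposition, together with its analogue for the source argument. The main point is that, because $\delta = \mathrm{id}$, a morphism in $\C^b({\mathcal A})$ is carried by $\mathcal{F}_m$ precisely to the summand indexed by $j=0$ under this decomposition, and the precovering isomorphisms are compatible with composition in the way recorded by the $G$-stabilizer axioms.

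For part (1), consider first the case $g\colon X\to Z$. I would show that $g$ factorizes through $f$ as $g = h\circ f$ for some $h\colon Y\to Z$ if and only if $\mathcal{F}_m(g)$ factorizes through $\mathcal{F}_m(f)$. The forward direction is immediate by functoriality: from $g = h f$ we get $\mathcal{F}_m(g) = \mathcal{F}_m(h)\mathcal{F}_m(f)$. For the converse, suppose $\mathcal{F}_m(g) = H\circ \mathcal{F}_m(f)$ for some $H\colon \mathcal{F}_m(Y)\to \mathcal{F}_m(Z)$ in $\C_{\equiv m}({\mathcal A})$. Using the precovering isomorphism $F_{Y,Z}$, I would write $H = \sum_{j\in\mathbb{Z}} \mathcal{F}_m(h_j)$ where $h_j\colon Y\to Z[mj]$ lies in the corresponding summand. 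Composing with $\mathcal{F}_m(f)$ and matching summands under the decomposition of $\mbox{Hom}_{\C_{\equiv m}({\mathcal A})}(\mathcal{F}_m(X),\mathcal{F}_m(Z))$, the equation $\mathcal{F}_m(g)=H\mathcal{F}_m(f)$ forces $g = h_0 f$ in the $j=0$ component, so $h_0\colon Y\to Z$ is the required factorization. The case $g\colon Z\to Y$ is dual, using $F^{X,Y}$ (or the second displayed isomorphism of Definition \ref{pre-covering}) on the source side instead.

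For part (2), I would apply part (1) with suitable choices. A morphism $f\colon X\to Y$ is a section exactly when $1_X$ factorizes through $f$, i.e. there exists $h$ with $hf=1_X$; applying part (1) to $g=1_X$ and using that $\mathcal{F}_m(1_X)=1_{\mathcal{F}_m(X)}$ gives that $f$ is a section if and only if $\mathcal{F}_m(f)$ is a section. Dually, $f$ is a retraction iff $1_Y$ factorizes through $f$, giving the retraction statement. The isomorphism claim then follows since an isomorphism is precisely a morphism that is both a section and a retraction, and $\mathcal{F}_m$ respects both properties.

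The step I expect to be the main obstacle is the converse in part (1): one must argue carefully that the summand-matching under the precovering isomorphisms genuinely isolates the $j=0$ component. The subtlety is that $\mathcal{F}_m(f)$ and the arbitrary periodic morphism $H$ both decompose into infinitely many (but finitely many nonzero) pieces, and composition mixes the indices additively; the $G$-stabilizer being the identity is exactly what guarantees the composite of the $j=0$ piece of $\mathcal{F}_m(f)$ with the $j=0$ piece of $H$lands again in the $j=0$ summand, so that reading off that component recovers a genuine factorization in $\C^b({\mathcal A})$. I would therefore be explicit about the additive grading by $j$ and invoke the compatibility of the isomorphisms $F_{X,Y}$, $F_{Y,Z}$, $F_{X,Z}$ with composition that is built into the precovering definition.
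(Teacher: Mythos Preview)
Your proposal is correct and follows essentially the same approach as the paper: the paper simply records this as an instance of \cite[Lemma~2.7]{BL} once Proposition~\ref{precov} has established that ${\mathcal F}_m$ is a $G$-precovering, and your argument is precisely the proof of that lemma specialized to this setting. Your worry about index mixing is unfounded, since composing ${\mathcal F}_m(h_j)$ with ${\mathcal F}_m(f)$ yields ${\mathcal F}_m(h_j f)$ with $h_j f\colon X\to Z[mj]$ sitting squarely in the $j$-th summand, so the $j=0$ component is isolated cleanly.
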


A functor $F : \mathcal{A}\rightarrow \mathcal{B}$ between linear categories is called almost dense if each indecomposable object in $\mathcal{B}$ is isomorphic to an object lying in the image of $F$.
\vspace{.05in}

Finally, we recall the definition of a Galois $G$-covering stated in \cite[Definition 2.8]{BL}.

\begin{defi}\label{Gal} Consider $ \mathcal{A}$ and $\mathcal{B}$ linear categories with $G$ a group acting admissibly on $\mathcal{A}$. Following \cite[Definition 2.8]{BL} we say that a $G$-precovering $F : \mathcal{A} \rightarrow \mathcal{B}$ is called a Galois $G$-covering provided that the following
conditions are verified.
\begin{enumerate}
\item[(a)] The functor $F$ is almost dense.
\item[(b)] If $X\in  \mathcal{A}$ is indecomposable, then $F(X)$ is indecomposable.
\item[(c)] If $X, Y\in  \mathcal{A}$ are indecomposable with $F(X) = F(Y)$, then there exists some $g \in  G$
such that $Y = g .X$.
\end{enumerate}
\end{defi}
\end{sinob}

\begin{sinob}{\bf The category of complexes of fixed size.} \label{notation}  Let ${\mathcal A}$ be an additive category. The category of complexes of fixed sized, precisely of size $n$ with $n \geq 2$,  is the full subcategory of  $\C^b(\mathcal{A})$    whose objects are the complexes with cells zero
outside of the natural interval $[1,n].$ We denote by  $\C_{[1,n]}(\mathcal{A})$ this category.

If $0<n\leq s$ we can consider  the full and faithful functor  $\phi_{n,s}: \,  \C_{[1,n]} ({\mathcal A})\rightarrow  \C_{[1,s]} ({\mathcal A})$    that sends each $X \in \C_{[1,n]}({\mathcal A}) $  to $$  X^{1} {\rightarrow} X^{2} \rightarrow \dots    {\rightarrow} X^{n}\stackrel{}{\rightarrow }0 \stackrel{s-n}{\cdots}{\rightarrow }0$$  in  $\C_{[1,s]}({\mathcal A}).$ Therefore we can consider $\C_{[1,n]} ({\mathcal A})$ as a full subcategory of $\C_{[1,s]} ({\mathcal A}).$

Without loss of generality, if $n\leq s$ we shall identify  a complex $X\in \C_{[1,n]}({\mathcal A}) $ with $\phi_{n,s}(X) \in \C_{[1,s]}({\mathcal A}).$
\end{sinob}

\section{The compression functor of bounded complexes}

Throughout all this section, we consider $\mathcal{A}$ an additive category.

\subsection{The compression functor and the indecomposable complexes}
We start  this section showing that  the compression functor provides an equivalence between the category $\C_{[1,n]}({\mathcal A})$ and a full subcategory of $\C_{\equiv s}({\mathcal A}),$ in the case that $0<n\leq s$.
\vspace{.05in}

First, we fix some useful notation.

 \begin{notation} \label{X_m}
\emph{We denote by $ \mathcal{X}_{s}'$ the full subcategory of $\C_{\equiv s}({\mathcal A})$ which consists of the $s-$periodic complexes  $Z'$ as follow}

$$Z': \cdots \rightarrow Z^{s-1}\stackrel{}{\rightarrow }Z^s \stackrel{0}{\rightarrow } Z^{1} {\rightarrow} Z^{2} \rightarrow \dots    {\rightarrow} Z^{s} \stackrel{0}{\rightarrow }Z^{1} {\rightarrow} Z^{2} \rightarrow \dots    {\rightarrow} Z^{s}   \stackrel{0}{\rightarrow }Z^{1} {\rightarrow}   \dots $$

\noindent \emph{with differential $d^{sj}=0$ for any $j\in {\mathbb Z}$.}
\vspace{.1in}

\emph{Whenever it is convenience,  we use  $\mathcal{X}_{n, s}' $ to denote  the full subcategory of $\mathcal{X}_{s} '$ which consists of the complexes  $Z' \in  \mathcal{X}_{s}'$ such that
$Z^{t+sj}=0$ for any $t\in \{n+1, \dots , s\}$ and  $j\in {\mathbb Z}.$
 Note that $ \mathcal{X}_{s, s} ' = \mathcal{X}_{s} ' .$}
\vspace{.05in}

\emph{By  $(-)' : \,  \C_{[1,s]}({\mathcal A})\rightarrow   \mathcal{X}_{s} '  $ we denote  the functor that sends  $Z\in \C_{[1,s]}({\mathcal A})$  to  $Z' \in  \mathcal{X}_{s} '.$}
\end{notation}

 \begin{rem} \label{ff}
\emph{ The  functor $(-)':\,  \C_{[1,s]}({\mathcal A})\rightarrow   \mathcal{X}_{s}'$  is an equivalence because it is  essentially surjective and also a full and faithful functor. In fact, consider $X$ and $Y$  complexes  in $ \C_{[1,s]}  ({\mathcal A})$. Observe that   $f:X\rightarrow Y$ is a morphism of complexes if and only if  $f':X'\rightarrow Y'$  is a morphism in $\C_{\equiv s}({\mathcal A}).$   This means,
 $\mbox{Hom}_{ \C_{\equiv s}({\mathcal A})}(X', Y')= \mbox{Hom}_ {\C_{[1,n]}({\mathcal A})}(X, Y)$  and therefore the functor  $(-)'$
 is full and faithful.}
 \end{rem}

 By Remark \ref{ff}, we get the following result.

\begin{prop}   \label{m>n}  Consider  $s\geq n.$ The compression functor ${\mathcal F}_s : \C_{[1,n]} ({\mathcal A}) \rightarrow \C_{\equiv s}({\mathcal A})$ is  a full and faithful functor and   its essential image is the full subcategory  $ \mathcal{X}_{n,s}' $ of $\C_{\equiv s}({\mathcal A}).$ Then we get the factorization ${\mathcal F}_s : \C_{[1,n]} ({\mathcal A}) \stackrel{\simeq}\longrightarrow  \mathcal{X}_{n,s}'  \stackrel{i}\rightarrow  \C_{\equiv s}({\mathcal A}).$ Moreover,  ${\mathcal F}_s$ preserves and reflects indecomposable complexes.
\end{prop}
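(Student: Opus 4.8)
The plan is to reduce the whole statement to the equivalence $(-)'$ of Remark~\ref{ff}, by observing that on $\C_{[1,n]}({\mathcal A})$ the compression functor ${\mathcal F}_s$ is just the periodic repetition functor. First I would check the functorial identity ${\mathcal F}_s|_{\C_{[1,n]}({\mathcal A})} = (-)'\circ \phi_{n,s}$, where $\phi_{n,s}\colon \C_{[1,n]}({\mathcal A})\to \C_{[1,s]}({\mathcal A})$ is the zero-padding inclusion of~\ref{notation}. For $X\in \C_{[1,n]}({\mathcal A})$ the term ${\mathcal F}_s(X)^i=\oplus_{t\in {\mathbb Z}}X^{i+ts}$ collapses to a single summand: since $X$ is supported in $[1,n]$ and $n\leq s$, at most one index $i+ts$ can lie in $[1,n]$, so only $t=0$ contributes when $i\in\{1,\dots,n\}$, while the whole sum is zero when $i\in\{n+1,\dots,s\}$. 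Inspecting the differential $d^i_{tt}=d_X^{i+ts}$, the relevant boundary block $d_X^{s}\colon X^{s}\to X^{s+1}$ vanishes because $X^{s+1}=0$; hence the junction differential $d^{sj}$ is zero and ${\mathcal F}_s(X)$ is exactly the $s$-periodic repetition of $X^1\to\cdots\to X^n\to 0\to\cdots\to 0$, that is, ${\mathcal F}_s(X)=(\phi_{n,s}(X))'\in \mathcal{X}_{n,s}'$. The identical bookkeeping on morphisms gives the functorial identity.

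Granting this, full-faithfulness and the essential image are formal. The inclusion $\phi_{n,s}$ is full and faithful by~\ref{notation} and $(-)'\colon \C_{[1,s]}({\mathcal A})\to\mathcal{X}_{s}'$ is an equivalence by Remark~\ref{ff}, so the composite ${\mathcal F}_s$ is full and faithful. The essential image of $\phi_{n,s}$ consists of the complexes of $\C_{[1,s]}({\mathcal A})$ vanishing in degrees $n+1,\dots,s$; transporting this along the equivalence $(-)'$ yields precisely $\mathcal{X}_{n,s}'$, and conversely every $Z'\in\mathcal{X}_{n,s}'$ is of the form $(\phi_{n,s}(X))'={\mathcal F}_s(X)$. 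This gives the claimed factorization ${\mathcal F}_s\colon \C_{[1,n]}({\mathcal A})\stackrel{\simeq}{\to}\mathcal{X}_{n,s}'\stackrel{i}{\to}\C_{\equiv s}({\mathcal A})$.

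For the last assertion, reflecting indecomposables is immediate: ${\mathcal F}_s$ is additive and faithful, so a nontrivial decomposition $X=A\oplus B$ produces the nontrivial decomposition ${\mathcal F}_s(X)={\mathcal F}_s(A)\oplus {\mathcal F}_s(B)$, with both summands nonzero since ${\mathcal F}_s$ annihilates no nonzero object. The delicate direction, and the main obstacle, is preservation: a fully faithful functor need not detect a decomposition of ${\mathcal F}_s(X)$ taken in the ambient category $\C_{\equiv s}({\mathcal A})$, since a priori the summands need not lie in $\mathcal{X}_{n,s}'$. I would resolve this by proving the small lemma that $\mathcal{X}_{n,s}'$ is closed under direct summands in $\C_{\equiv s}({\mathcal A})$: if $Z\in\mathcal{X}_{n,s}'$ and $Z\cong Z_1\oplus Z_2$ with structural maps $i_k,p_k$, then each $Z_k^i$ is a summand of $Z^i$, hence zero whenever $Z^i=0$, and the junction differential $d_{Z_k}^{sj}=p_k^{sj+1}d_Z^{sj}i_k^{sj}$ is a restriction of $d_Z^{sj}=0$, so $Z_k\in\mathcal{X}_{n,s}'$. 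Phrasing indecomposability through direct sums rather than idempotents keeps this valid without assuming $\A$ is idempotent-complete. With summand-closure, indecomposability in $\mathcal{X}_{n,s}'$ coincides with indecomposability in $\C_{\equiv s}({\mathcal A})$, and since ${\mathcal F}_s$ is an equivalence onto $\mathcal{X}_{n,s}'$ it transports indecomposability in both directions, finishing the proof.
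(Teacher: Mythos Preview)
Your proof is correct and follows essentially the same route as the paper: identify ${\mathcal F}_s|_{\C_{[1,n]}({\mathcal A})}=(-)'\circ\phi_{n,s}$, invoke Remark~\ref{ff} for full-faithfulness and the essential image, and then argue that $\mathcal{X}_{n,s}'$ is closed under direct summands in $\C_{\equiv s}({\mathcal A})$ to handle preservation of indecomposables. The only minor difference is in this last step: the paper splits into the cases $s>n$ (where vanishing of the degrees $n+1,\dots,s$ forces the summand into $\mathcal{X}_{n,s}'$) and $s=n$ (where it uses that the section $u^1$ is a monomorphism to deduce $d_W^n=0$), while you give a uniform argument via the formula $d_{Z_k}^{sj}=p_k^{sj+1}d_Z^{sj}i_k^{sj}$; both are equivalent, and your version is arguably cleaner.
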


\begin{proof}
Consider  $Z \in \C_{[1,n]} ({\mathcal A})$   and $Z' \in  \mathcal{X}_{n,s}'$ the $s-$periodic complex defined in \ref{X_m}.

We affirm  that  ${\mathcal F}_s (Z) =Z'.$ In fact,    for each ${t \in {\mathbb Z}}$  we can consider    ${j_0\in {\mathbb Z}}$   the   unique integer such that $t\in [sj_0+1, sj_0+s].$ Then we have that  ${\mathcal F}_s (Z)^t= \oplus_ {j\in {\mathbb Z}} Z^{t-sj}=
Z^{t-sj_0}=(Z')^t .$ We infer that  the essential image of  ${\mathcal F}_s $ is the full subcategory  $ \mathcal{X}_{n,s}' $ into $\C_{\equiv s}({\mathcal A}).$

By Remark \ref{ff}, we  get that  $(-)' \circ \phi_{n,s}:  \C_{[1,n]}({\mathcal A})\rightarrow   \mathcal{X}_{n,s}'$ is an equivalence of categories.
 Then the functor ${\mathcal F}_s$ is the composition
$\C_{[1,n]} ({\mathcal A}) \stackrel{(-)' \circ \phi_{n,s}}\longrightarrow  \mathcal{X}_{n,s}'  \stackrel{i}\rightarrow  \C_{\equiv s}({\mathcal A})$
\noindent where $i$ is the inclusion morphism.

Finally, consider  $X$  a complex  in $ \C_{[1,n]}  ({\mathcal A})$. Note that  $W$ is  a direct summand of
 ${\mathcal F}_s (X)=X' $  in $ \C_{\equiv s}({\mathcal A})$ if and only if  $W\in  \mathcal{X}_n '.$  Indeed,  if $s>n$  we conclude the result using the fact that  $(X')^j =0$  for each $j\in \{n+1, \dots , s\}.$   If $s=n$  since there is a section $u:W\rightarrow X'$ then $d_W^n=0$ because $u^1d_W^n=d_{X'}^n u^n=0$ and   $u^1$ is a monomorphism.
 Therefore,  $W\in  \mathcal{X}_n '$  and $W=Y'={\mathcal F}_s (Y)$    for some $Y$   in $ \C_{[1,n]}  ({\mathcal A})$.
From this fact and using that  the compression functor is additive we get the following equivalent statements: ${\mathcal F}_s (X)={\mathcal F}_s (Y)\oplus {\mathcal F}_s (L)$ in $\C_{\equiv s}({\mathcal A})$  if and only if ${\mathcal F}_s (X)={\mathcal F}_s (Y)\oplus {\mathcal F}_s (L)$ in $ \mathcal{X}_n '$ and also   ${\mathcal F}_s (X)={\mathcal F}_s (Y)\oplus {\mathcal F}_s (L)$ in $ \mathcal{X}_n '$ if and only if  $X=Y\oplus L$  in $ \C_{[1,n]}  ({\mathcal A}).$
\end{proof}

We observe that, up to shifts, a complex in  $\C^{b}({\mathcal A})$ belongs to a category $\C_{[1,n]} ({\mathcal A})$, for some $n$.

\begin{rem} \label{obs}
\emph{Consider $l(X)$ the size of the complex  $X \in \C^{b}({\mathcal A}).$
Then the following conditions hold.}
\emph{\begin{enumerate}
\item  For each  complex $X \in \C^b  ({\mathcal A})$ there is an integer $t$   such that  $X[t]\in  \C_{[1,n]} ({\mathcal A})$ where $n=l(X)$. Moreover, ${\mathcal F}_m(X[t]) \in  \mathcal{X}_n'$  for every $m \geq n=l(X)$, see Proposition \ref{m>n}.
\item For every $X$ and $Y$ in $\C^b  ({\mathcal A})$ there is an integer  $t$  and $n>1$ such that $X[t]$ and $Y[t]$ in  $\C_{[1,n]} ({\mathcal A})$. Moreover,  the following isomorphisms $$\mbox{Hom}_{ \C_{\equiv m}({\mathcal A})}({\mathcal F}_m (X[t]), {\mathcal F}_m (Y[t]))= \mbox{Hom}_ {\C_{[1,n]}  ({\mathcal A})} (X[t], Y[t])=  \mbox{Hom}_ {\C^b ({\mathcal A})} (X, Y)$$
\end{enumerate}}
\noindent \emph{hold as a consequence of the equivalence given in Proposition \ref{m>n} and the fact that  the shift functor  $[-]: \C^b  ({\mathcal A})\rightarrow \C^b  ({\mathcal A})$ is an equivalence of categories.}
\end{rem}

In general, for bounded complexes we shall show that compression functor preserve indecomposable complexes.
\vspace{.05in}

First, we introduce the following notation.

\begin{notation}\label{X'} \emph{We fix a $m>0$ and a $n>1.$}

\emph{We denote  by  $w$  the positive integer such that $ w=\mbox{min} \{s \geq 1, \, \mbox{such that }   ms \geq n \}.$}

\emph{We note that there is an integer $l\geq 0 $ such that $mw=n+l$.}
\end{notation}

Now, we  collect  some facts about   the compression functor.

 \begin{lem} \label{descompos}   For $m \geq 1$ and $X\in \C_{[1,n]} ({\mathcal A})$  the following conditions hold.
\begin{enumerate}
\item ${\mathcal F}_{mw}(X) =\oplus_ {j\equiv_w 0} X[mj ] =X'  \in \mathcal{X}_{n,mw} '.$
\item The   $m-$periodic complex ${\mathcal F}_m(X) $  decomposes as  a finite sum of $mw-$periodic complexes as follows
$${\mathcal F}_m(X) =  \oplus_ {j\in [0,w-1] } {\mathcal F}_{mw}(X)[mj].$$% =  {\mathcal F}_{mw}( \oplus_ {j\in [0,w-1] }X[mj]).$$
\end{enumerate}
\end{lem}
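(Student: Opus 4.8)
The plan is to prove both statements directly from the description of the compression functor as a direct sum of shifts, namely ${\mathcal F}_m(X)=\Delta(X)=\oplus_{j\in\mathbb{Z}} X[mj]$ recorded in the Preliminaries, so that everything reduces to bookkeeping of shift indices plus a single application of Proposition \ref{m>n}. The only genuine input beyond formal manipulation is the identification in (1), which is exactly the content of Proposition \ref{m>n} once one checks the size hypothesis.

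For part (1), I would start from the defining formula for the compression functor of period $mw$, which gives ${\mathcal F}_{mw}(X)=\oplus_{k\in\mathbb{Z}} X[(mw)k]$. Reindexing by $j=wk$, the condition $j\equiv_w 0$ is precisely $j\in w\mathbb{Z}$, so $\oplus_{k\in\mathbb{Z}} X[mwk]=\oplus_{j\equiv_w 0} X[mj]$, which is the first equality. For the identification with $X'$, I would recall from Notation \ref{X'} that $mw=n+l$ with $l\geq 0$, hence $mw\geq n$. Then Proposition \ref{m>n}, applied with $s=mw$, says exactly that ${\mathcal F}_{mw}(X)=X'$ and that this object lies in $\mathcal{X}_{n,mw}'$, since $X$ is concentrated in degrees $[1,n]$ and therefore $X'$ vanishes in the degrees $n+1,\dots,mw$ modulo $mw$. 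This settles (1).

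For part (2), the key is the shift arithmetic $\big(\oplus_{k\in\mathbb{Z}} X[mwk]\big)[mj]=\oplus_{k\in\mathbb{Z}} X[mwk+mj]=\oplus_{k\in\mathbb{Z}} X[m(wk+j)]$, which uses only that the shift functor is additive and satisfies $X[a][b]=X[a+b]$. Summing over $j\in[0,w-1]$ and invoking part (1) yields
$$\oplus_{j\in[0,w-1]} {\mathcal F}_{mw}(X)[mj]=\oplus_{j\in[0,w-1]}\oplus_{k\in\mathbb{Z}} X[m(wk+j)].$$
The combinatorial heart is then the observation that $(j,k)\mapsto wk+j$ is a bijection from $[0,w-1]\times\mathbb{Z}$ onto $\mathbb{Z}$ (Euclidean division by $w$: $j$ is the remainder and $k$ the quotient), so the double sum reindexes to $\oplus_{s\in\mathbb{Z}} X[ms]$, which is precisely $\Delta(X)={\mathcal F}_m(X)$.

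I expect the only real point of care to be consistency of conventions: one must make sure that shifting the $mw$-periodic complex ${\mathcal F}_{mw}(X)$ by $mj$ and then forming the finite sum produces an $m$-periodic complex agreeing with ${\mathcal F}_m(X)$ degreewise, signs in the shifted differentials included. This is harmless because I work entirely through the $\Delta$-description $\oplus X[\,\cdot\,]$, in which all signs are carried by the shift functor itself; as a sanity check one verifies $m$-periodicity of the right-hand side directly, noting that shifting it by $m$ merely permutes the summands, with the $j=w$ term coinciding with the $j=0$ term by the $mw$-periodicity of ${\mathcal F}_{mw}(X)$. Hence both sides are $m$-periodic complexes with identical terms and differentials, and the claimed decomposition follows.
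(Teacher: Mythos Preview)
Your proof is correct and follows essentially the same approach as the paper: both use $mw\geq n$ together with Proposition \ref{m>n} to establish (1), and then obtain (2) by partitioning the index set $\mathbb{Z}$ in ${\mathcal F}_m(X)=\oplus_{j\in\mathbb{Z}}X[mj]$ according to the residue of $j$ modulo $w$, identifying each residue class with a shift of ${\mathcal F}_{mw}(X)$. Your explicit remark on $m$-periodicity and signs is a helpful addition but not a departure from the paper's argument.
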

\begin{proof}
 If  $s \geq n,$  then by Proposiion \ref{m>n} we know that ${\mathcal F}_s(X) =\oplus_ {j\in {\mathbb Z}} X[sj ] =X' \in   \mathcal{X}_{s} '$. In particular, $mw \geq n$ and we have that

 $${\mathcal F}_{mw}(X) =  \oplus_{j\in {\mathbb Z}} X[mwj]= \oplus_ {j\equiv_w 0} X[mj] =  X' \in  \mathcal{X}'_{n,mw}.$$

For $m\geq 1$, we can express
$$ \begin{array}{lcl}
   {\mathcal F}_m(X) & =& \oplus_ {j\in {\mathbb Z}} X[mj ]\\&=& \oplus_ {j\equiv_w 0} X[mj] \oplus \oplus_ {j\equiv_w 1} X[mj]   \oplus  \dots \oplus_ {j\equiv_w (w-1)} X[mj ] \\
   &=& \oplus_ {j\equiv_w 0} X[mj] \oplus \oplus_ {j\equiv_w 0} X[mj+1]   \oplus  \dots \oplus_ {j\equiv_w 0} X[mj +(w-1)].
   \end{array}
   $$
Therefore, we  get  that

$$ {\mathcal F}_m(X) =  \oplus_ {j\in [0,w-1] } X'[mj] = \oplus_ {j\in [0,w-1] } {\mathcal F}_{mw}(X)[mj],$$

\noindent proving the result.
\end{proof}

Now, we prove that the functor
 ${\mathcal F}_m: \C^{b}(\mathcal{A}) \rightarrow \mathbf{C}_{\equiv m}(\mathcal{A})$ preserves indecomposable complexes.

\begin{prop}\label{ind} Let $\mathcal{A}$  be a Krull-Schmidt category.  For any $m>1$ we get that the compression functor ${\mathcal F}_m: \C^{b}(\mathcal{A}) \rightarrow \mathbf{C}_{\equiv m}(\mathcal{A})$ preserves and reflects indecomposable complexes.
\end{prop}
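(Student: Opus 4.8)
=== PLAN ===

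The plan is to reduce the general statement for $\C^b(\mathcal{A})$ to the fixed-size case already handled in Proposition \ref{m>n}, by combining the shift-reduction observed in Remark \ref{obs} with the decomposition of ${\mathcal F}_m$ into $mw$-periodic pieces given in Lemma \ref{descompos}. Since the shift functor $[-]$ is an auto-equivalence of $\C^b(\mathcal{A})$ that commutes (up to shift) with ${\mathcal F}_m$, and since it preserves and reflects indecomposability, it suffices to treat a complex $X \in \C_{[1,n]}(\mathcal{A})$ for suitable $n$. Thus I would first invoke Remark \ref{obs}(1) to replace an arbitrary bounded complex by one concentrated in $[1,n]$ with $n = l(X)$.

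Next I would establish \emph{preservation}: assume $X \in \C_{[1,n]}(\mathcal{A})$ is indecomposable and show ${\mathcal F}_m(X)$ is indecomposable in $\C_{\equiv m}(\mathcal{A})$. Here I choose $w$ as in Notation \ref{X'}, so that $mw \geq n$, and apply Lemma \ref{descompos}: ${\mathcal F}_{mw}(X) = X' \in \mathcal{X}_{n,mw}'$ is indecomposable by Proposition \ref{m>n} (which gives that ${\mathcal F}_{mw}$ preserves indecomposables via the equivalence $\C_{[1,n]}(\mathcal{A}) \simeq \mathcal{X}_{n,mw}'$). The decomposition ${\mathcal F}_m(X) = \oplus_{j\in[0,w-1]} {\mathcal F}_{mw}(X)[mj]$ then expresses ${\mathcal F}_m(X)$ as a sum of $w$ shifted copies of the indecomposable $mw$-periodic complex $X'$; the point is that these shifts become identified when passing to the coarser $m$-periodicity. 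The key computation is that the endomorphism ring of ${\mathcal F}_m(X)$ in $\C_{\equiv m}(\mathcal{A})$ is local: using that ${\mathcal F}_m$ is a $G$-precovering (Proposition \ref{precov}, with $G = \langle [m]\rangle$), I would identify $\mathrm{End}_{\C_{\equiv m}(\mathcal{A})}({\mathcal F}_m(X)) \cong \oplus_{j\in\mathbb{Z}} \mathrm{Hom}_{\C^b(\mathcal{A})}(X, X[mj])$, and then argue that since $X$ is indecomposable in the Krull-Schmidt category $\C^b(\mathcal{A})$, the identity summand ($j=0$) contributes a local ring $\mathrm{End}(X)$ while the $j\neq 0$ terms land in the radical (they are not isomorphisms because the action of $[m]$ is free and locally bounded, so $X \not\cong X[mj]$ for $j\neq 0$). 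This forces the endomorphism ring of ${\mathcal F}_m(X)$ to be local, hence ${\mathcal F}_m(X)$ indecomposable.

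For \emph{reflection}, I would argue the contrapositive: if ${\mathcal F}_m(X) = U \oplus V$ nontrivially, I want a corresponding nontrivial decomposition of $X$. This is where Corollary \ref{section}(2) is the workhorse — a nontrivial idempotent $e \in \mathrm{End}_{\C_{\equiv m}(\mathcal{A})}({\mathcal F}_m(X))$ pulls back, via the $G$-precovering isomorphism of Proposition \ref{precov}, to an endomorphism family $(e_j)_{j}$ in $\oplus_j \mathrm{Hom}_{\C^b}(X, X[mj])$; since $\C^b(\mathcal{A})$ is Krull-Schmidt, a nontrivial idempotent there would split $X$. The main obstacle I anticipate is bookkeeping the identification of endomorphism rings compatibly with the idempotent structure: I must verify that the $G$-precovering isomorphism is a \emph{ring} isomorphism onto $\mathrm{End}_{\C_{\equiv m}}({\mathcal F}_m(X))$ with the graded/skew-group-ring multiplication, so that idempotents correspond correctly. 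Once this ring-theoretic identification is pinned down, both preservation and reflection follow from locality of $\mathrm{End}(X)$ and the Krull-Schmidt property; the geometric decomposition in Lemma \ref{descompos} mainly serves to pin down the $mw$ base case rather than the final argument.
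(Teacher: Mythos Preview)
Your approach is genuinely different from the paper's. You argue \emph{preservation} by showing that $\mathrm{End}_{\C_{\equiv m}}({\mathcal F}_m(X))\cong \bigoplus_j \mathrm{Hom}_{\C^b}(X,X[mj])$ is a local ring, using the $G$-precovering isomorphism of Proposition~\ref{precov}. The paper instead argues combinatorially: viewing any nonzero summand $Z$ of ${\mathcal F}_m(X)$ inside the coarser category $\C_{\equiv mw}(\mathcal{A})$, the decomposition ${\mathcal F}_m(X)=\bigoplus_{j=0}^{w-1}{\mathcal F}_{mw}(X)[mj]$ of Lemma~\ref{descompos} together with Krull--Schmidt forces $Z\cong \bigoplus_{j\in J}{\mathcal F}_{mw}(X)[mj]$ for some $J\subseteq[0,w-1]$; then the $m$-periodicity $Z=Z[m]$ cycles $J$ through all of $[0,w-1]$, so $Z={\mathcal F}_m(X)$. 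Your route is the standard covering-theory argument and is conceptually cleaner, but the step ``the $j\neq 0$ summands land in the radical'' is exactly the nontrivial content of \cite[Lemma~2.6]{BL}; with only Krull--Schmidt (no Hom-finiteness) it is not as immediate as you suggest, and you should either cite that lemma or supply the argument that the graded ring has no nontrivial idempotents. The paper's hands-on argument sidesteps this entirely.

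There is also a labeling slip: what you call ``reflection'' is in fact the contrapositive of \emph{preservation} again (``${\mathcal F}_m(X)$ splits $\Rightarrow$ $X$ splits''). Genuine reflection (${\mathcal F}_m(X)$ indecomposable $\Rightarrow$ $X$ indecomposable) is the trivial direction: if $X=X_1\oplus X_2$ with both $X_i\neq 0$, then ${\mathcal F}_m(X)={\mathcal F}_m(X_1)\oplus{\mathcal F}_m(X_2)$ by additivity, and each ${\mathcal F}_m(X_i)\neq 0$. The paper dispatches this in its first sentence; you never actually address it, and your ``reflection'' paragraph merely duplicates the idempotent argument from your preservation paragraph.
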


\begin{proof}  If $Z$ is a direct summand of $X$ in $ \C^{b}(\mathcal{A})$,  that is, $X=Z\oplus L$  in $\C^b({\mathcal A})$ for some $L$  then
${\mathcal F}_m (X)={\mathcal F}_m (Z)\oplus {\mathcal F}_m (L)$ in $\C_{\equiv m}({\mathcal A}),$ since the compression functor is additive.

Consider an indecomposable complex $X\in \C^b ({\mathcal A})$. Without loss of generality, we may assume that $X\in \C_{[1,n]} ({\mathcal A})$ with $X^1\neq 0$ and $X^n\neq 0.$ Since $X$ is indecomposable  in $ \C^{b}(\mathcal{A})$, then so is in $ \C_{[1,n]} ({\mathcal A}).$ If $m\geq  n,$  we get the result from Proposition  \ref{m>n}.

Now, we analyze the case that $m<n.$ By Lemma \ref{descompos}, the complex

$${\mathcal F}_m(X) =\oplus_ {j\in [0,w-1] } {\mathcal F}_{mw}(X)[mj]$$

\noindent is  a finite sum of $mw-$periodic complexes with $mw=n+l$ where $0 \leq l \leq m-1.$ In fact, since $w=\mbox{min} \{s \geq 1, \, \mbox{such that}  \, ms \geq n \}$ then $mw \geq n$ and by the minimality of $w$ we have that $n> m(w-1)$. Therefore, $n+l \geq n$ implies that $l \geq 0$. Furthermore,
$n+l-m < n$ then $l < m$ and hence $l \leq m-1$. We observe that in this case $w\geq 2.$

By Proposition  \ref{m>n} we know that ${\mathcal F}_{mw}(X)$ is indecomposable in $ \C_{\equiv mw}({\mathcal A})$   because $X$ is indecomposable  in  $\C_{[1,n]} ({\mathcal A}).$ Then also the complex ${\mathcal F}_{mw}(X)[mj]$  is indecomposable in $\C_{\equiv mw}({\mathcal A})$ for any ${j\in [0,w-1] }.$

 We  prove that  ${\mathcal F}_m(X)$ has only trivial  direct summand    in  $ \C_{\equiv m}({\mathcal A}).$ In fact, if $Z$ is a non-zero direct summand of  ${\mathcal F}_m(X)$   in  $ \C_{\equiv m}({\mathcal A})$ then   there is a section  $u:Z  \rightarrow {\mathcal F}_m(X)$  in  $ \C_{\equiv m}({\mathcal A})$. Furthermore, $u$ is also a section  in  $\C({\mathcal A})$ and also a section in  $ \C_{\equiv mw}({\mathcal A}).$
Then we infer that  there is    ${J \subseteq [0,w-1] }$ such that  $Z=\oplus_ {j\in J} {\mathcal F}_{mw}(X)[mj] $ as $mw-$periodic complex.
Since $Z \neq 0$ then  $J\neq \emptyset$. Consider  $j_0 \in J.$

 Since $Z$ is  a $m-$periodic complex we have that $Z=Z[m(t-j_0)],$  for every  ${t\in [0,w-1] }.$  Then

 $$ \begin{array}{lcl}
   Z & =& \oplus_ {j\in J} {\mathcal F}_{mw}(X)[m(j+t-j_0)]\\
   &=&{\mathcal F}_{mw}(X)[mt] \oplus \oplus_ {j\neq j_0} {\mathcal F}_{mw}(X)[m(j+t-j_0)],
\end{array}
   $$

\noindent and then ${\mathcal F}_{mw}(X)[mj]$ is a direct summand of $Z$  for each ${t\in [0,w-1] }$. Thus $Z={\mathcal F}_m(X)$ proving that   ${\mathcal F}_m(X)$ is an indecomposable $m-$periodic complex.
\end{proof}

\subsection{\bf Irreducible morphisms under the compression functor.}
In the rest of this section,  we focus our attention on the behaviour of the irreducible morphisms under the compression functor ${\mathcal F}_m : \C^b ({\mathcal A}) \rightarrow \C_{\equiv m}({\mathcal A})$.
\vspace{.1in}

First, we   prove that the compression functor reflects irreducible morphisms.

\begin{lem}\label{bycoro} Let $f:X \rightarrow Y$  be a morphism in $\mathbf{C}^{b}(\mathcal{A})$. If ${\mathcal F}_m(f): {\mathcal F}_m(X) \rightarrow {\mathcal F}_m(Y)$ is irreducible in $\mathbf{C}_{\equiv m}(\mathcal{A})$ then $f$ is irreducible in $\mathbf{C}^{b}(\mathcal{A})$.
 \end{lem}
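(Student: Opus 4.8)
The plan is to reduce everything to Corollary \ref{section}(2), which transfers the property of being a section, a retraction, or an isomorphism back and forth across the compression functor, together with the elementary fact that $\mathcal{F}_m$ is a functor and therefore preserves factorizations. The argument then amounts to the standard verification of the two clauses in Definition \ref{irred}.

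First I would check that $f$ is not split. Since $\mathcal{F}_m(f)$ is irreducible in $\mathbf{C}_{\equiv m}(\mathcal{A})$, it is in particular neither a section nor a retraction. If $f$ were a section (respectively, a retraction) in $\mathbf{C}^b(\mathcal{A})$, then by Corollary \ref{section}(2) its image $\mathcal{F}_m(f)$ would be a section (respectively, a retraction) as well, contradicting the irreducibility of $\mathcal{F}_m(f)$. Hence $f$ is not a split morphism. Next I would verify the factorization condition: suppose $f = hg$ is any factorization in $\mathbf{C}^b(\mathcal{A})$, with $g\colon X\rightarrow Z$ and $h\colon Z\rightarrow Y$ for some $Z\in\mathbf{C}^b(\mathcal{A})$. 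Applying the functor $\mathcal{F}_m$ yields the factorization $\mathcal{F}_m(f)=\mathcal{F}_m(h)\,\mathcal{F}_m(g)$ in $\mathbf{C}_{\equiv m}(\mathcal{A})$. Because $\mathcal{F}_m(f)$ is irreducible, either $\mathcal{F}_m(g)$ is a section or $\mathcal{F}_m(h)$ is a retraction. Applying Corollary \ref{section}(2) to $g$ and to $h$ respectively, we conclude that $g$ is a section or $h$ is a retraction. Both clauses of Definition \ref{irred} being satisfied, $f$ is irreducible in $\mathbf{C}^b(\mathcal{A})$.

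The argument is essentially formal once Corollary \ref{section} is available; the genuine content has already been invested in establishing that $\mathcal{F}_m$ is a $G$-precovering (Proposition \ref{precov}) and in the transfer statement of Corollary \ref{section}(2). The only point requiring a moment of care is that Corollary \ref{section}(2) must be applied not to $f$ itself but to the intermediate morphisms $g$ and $h$ produced by an arbitrary factorization; this is legitimate precisely because the corollary is stated for an arbitrary morphism of $\mathbf{C}^b(\mathcal{A})$ and imposes no indecomposability hypothesis on the objects $X,Y,Z$. I therefore do not anticipate any serious obstacle here, the difficulty having been absorbed into the preceding precovering results.
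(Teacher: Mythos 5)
Your proposal is correct and follows essentially the same route as the paper's own proof: both clauses of the definition of irreducibility are verified by transferring the section/retraction properties back from $\mathbf{C}_{\equiv m}(\mathcal{A})$ to $\mathbf{C}^{b}(\mathcal{A})$ via Corollary \ref{section}(2), applied first to $f$ itself and then to the factors of an arbitrary factorization. Your closing remark that the corollary imposes no indecomposability hypothesis on $X$, $Y$, $Z$ is exactly the point the paper itself flags before stating that corollary.
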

\begin{proof}  The morphism $f$ is neither a section nor a retraction, otherwise by Corollary  \ref{section} (2),   ${\mathcal F}_m(f)$ is a section or a retraction, getting a contradiction to the fact that ${\mathcal F}_m(f)$ is irreducible.

On the other hand, if    $f=gh$    in $\C^b ({\mathcal A})$ then  ${\mathcal F}_m(h)$ is a section or ${\mathcal F}_m(g)$  is a retraction. Again by Corollary \ref{section} (2), $h$ is a section or $g$ is a retraction, getting the result.
\end{proof}

Now, we determine conditions under which  the compression functor ${\mathcal F}_m: \C^{b}({\mathcal A}) \rightarrow \mathbf{C}_{\equiv m}(\mathcal{A})$ preserves irreducible morphisms.

\begin{teo} \label{irred} Let $f:X \rightarrow Y$  be a morphism in $\mathbf{C}^{b}(\mathcal{A})$.  Let $ \ell(X\oplus Y)$  be the size of the direct sum of the complexes $X$ and $Y$. For $m \geq   \ell(X\oplus Y)$ we have that ${\mathcal F}_m(f)$ is irreducible in  $\mathbf{C}_{\equiv m}(\mathcal{A})$    if and only if $f$ is irreducible in $\C^{b}(\mathcal{A})$.
\end{teo}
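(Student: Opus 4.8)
Since Lemma \ref{bycoro} already yields the implication ``${\mathcal F}_m(f)$ irreducible $\Rightarrow$ $f$ irreducible'' for every $m$, the plan is to prove only the preservation direction: if $f$ is irreducible in $\C^{b}(\mathcal{A})$ and $m\geq \ell(X\oplus Y)$, then ${\mathcal F}_m(f)$ is irreducible in $\C_{\equiv m}(\mathcal{A})$. First I would normalize the situation. Using Remark \ref{obs} together with the fact that the shift functor is an equivalence of $\C^{b}(\mathcal{A})$ that commutes with ${\mathcal F}_m$ (so that ${\mathcal F}_m(f)$ is irreducible if and only if ${\mathcal F}_m(f[t])$ is), I may assume $X,Y\in \C_{[1,n]}(\mathcal{A})$ with $n=\ell(X\oplus Y)\leq m$. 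Then Proposition \ref{m>n} gives ${\mathcal F}_m(X)=X'$ and ${\mathcal F}_m(Y)=Y'$ in $\mathcal{X}_{n,m}'$, that is, $m$-periodic complexes whose cells vanish in degrees $n+1,\dots ,m$ and whose wrap differentials $d^{mj}$ are zero. That ${\mathcal F}_m(f)$ is not split is immediate from Corollary \ref{section}(2), since $f$ is not split.

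Next I would produce, from an arbitrary factorization ${\mathcal F}_m(f)=\beta\alpha$ through some $W\in \C_{\equiv m}(\mathcal{A})$, a factorization of $f$ inside $\C_{[1,n]}(\mathcal{A})$. The idea is to restrict all morphisms to the fundamental window $[1,n]$: I set $\alpha_0^{t}=\alpha^{t}$ and $\beta_0^{t}=\beta^{t}$ for $t\in[1,n]$ and zero elsewhere, and let $\tau W\in\mathcal{X}_{n,m}'$ be the $m$-periodic complex whose cells are $W^1,\dots ,W^n$ in degrees $1,\dots ,n$, zero in the remaining degrees, with only nonzero differentials $d_W^1,\dots ,d_W^{n-1}$. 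A direct check, using that $X'$ and $Y'$ have zero cells and zero differentials at degrees $n$ and $m$, shows that $\alpha_0\colon X'\to \tau W$ and $\beta_0\colon \tau W\to Y'$ are genuine morphisms of $m$-periodic complexes and that $\beta_0\alpha_0={\mathcal F}_m(f)$. Through the equivalence $\C_{[1,n]}(\mathcal{A})\simeq \mathcal{X}_{n,m}'$ of Proposition \ref{m>n}, this corresponds to a factorization $f=b_0a_0$ in $\C_{[1,n]}(\mathcal{A})$. Because $\C_{[1,n]}(\mathcal{A})$ is a full subcategory of $\C^{b}(\mathcal{A})$, the morphism $f$ is irreducible there, so $a_0$ is a section or $b_0$ is a retraction; equivalently $\alpha_0$ is a section or $\beta_0$ is a retraction in $\C_{\equiv m}(\mathcal{A})$.

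The final and hardest step is to transfer this splitting from the truncated factorization back to the original one through $W$, and this is exactly where the bound $m\geq\ell(X\oplus Y)$ is essential. Given, say, a retraction $\rho_0$ of $\alpha_0$, the natural candidate $\rho\colon W\to X'$ is $\rho^{t}=\rho_0^{t}$ on $[1,n]$ and $0$ on the gap; every commutativity square of $\rho$ is then inherited from $\rho_0$ except the one at the periodic junction, $d_{X'}^{m}\rho^{m}=\rho^{1}d_W^{m}$, which, since $\rho^{m}=0$ and $d_{X'}^{m}=0$, reduces to the single equation $\rho^{1}d_W^{m}=0$. The tools I would use to settle this are the identities forced by $\alpha$ and $\beta$ being $m$-periodic chain maps into and out of complexes of $\mathcal{X}_{n,m}'$: evaluating them at the junction yields $d_W^{m}\alpha^{m}=0$ and $\beta^{1}d_W^{m}=0$, while the vanishing of the cells of $X'$ and $Y'$ in degrees $n+1,\dots ,m$ (guaranteed precisely by $m\geq\ell(X\oplus Y)$) provides the room needed to modify $\rho_0$ at the junction. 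The retraction case for $\beta$ is symmetric, and there the corresponding junction condition is in fact automatic because $Y'$ vanishes in that degree. I expect the main difficulty to be exactly this last verification: the wrap differential $d_W^{m}$ is the one piece of data that no bounded truncation of $W$ can detect, so annihilating $\rho^{1}d_W^{m}$ must be achieved purely by combining the periodicity relations with the gap created by the hypothesis on $m$, and making this work over an arbitrary additive category is the crux of the argument.
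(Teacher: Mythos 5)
Your argument is sound, and parallel to the paper's, up through the dichotomy: the reflection direction via Lemma \ref{bycoro}, the normalization to $X,Y\in\mathbf{C}_{[1,n]}(\mathcal{A})$ with $n=\ell(X\oplus Y)\le m$, the verification that $\alpha_0,\beta_0$ are chain maps with $\beta_0\alpha_0=\mathcal{F}_m(f)$, and the conclusion that $\alpha_0$ is a section or $\beta_0$ is a retraction are all correct. The gap is exactly the transfer step you call the crux, and it is not merely unfinished --- it is false as you have set it up, because your truncation $\tau W$ discards the two differentials ($d_W^{n}$ and the wrap differential $d_W^{m}$) in which the obstruction lives, and no ``modification of $\rho_0$ at the junction'' can recover data that is no longer present. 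Concretely, take $\mathcal{A}=\mathrm{mod}\,k$, $m=n=2$, let $X'$ be the $2$-periodic complex with cells $(k,0)$ and zero differentials, let $W$ be the $2$-periodic complex with $W^1=W^2=k$, $d_W^{1}=0$, $d_W^{2}=\mathrm{id}$ (up to sign, the shifted projective--injective $K_k[1]$ of Proposition \ref{K_P}), and let $\alpha=(\mathrm{id},0)\colon X'\to W$, which is a chain map. Then $\tau W$ has both differentials zero, and $\alpha_0$ admits the retraction $\rho_0=(\mathrm{id},0)$; yet $\alpha$ admits no retraction at all, since any chain map $\rho\colon W\to X'$ satisfies $\rho^{1}d_W^{2}=d_{X'}^{2}\rho^{2}=0$, forcing $\rho^{1}=0$ and hence $\rho\alpha\neq 1_{X'}$. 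Your proposed identities $d_W^{m}\alpha^{m}=0$ and $\beta^{1}d_W^{m}=0$ do hold, but they constrain $\alpha$ and $\beta$, not the arbitrary splitting $\rho_0$, so they cannot help; and since your transfer step invokes nothing about $\beta$ or the irreducibility of $f$, this example refutes it. Your parallel claim that the $\beta$-case is ``automatic'' is also wrong: the zero-extension $\sigma$ of $\sigma_0$ must in addition satisfy $d_W^{n}\sigma_0^{n}=0$ at degree $n$ (the edge you did not check, where $Y^{n}$ need not vanish), and $\tau W$ has forgotten $d_W^{n}$ as well. In short, the dichotomy you obtain (``$\alpha_0$ splits or $\beta_0$ splits'') is strictly weaker than the one you need, because splittings pass down to truncations but not back up.

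The paper's proof differs from yours in precisely this detail, and the difference is the whole point: instead of cutting $W$ down to $[1,n]$, it unrolls $W$ to the bounded complex $\widetilde{W}\colon W^{m}\xrightarrow{d_W^{m}}W^{1}\to\cdots\to W^{m}\xrightarrow{d_W^{m}}W^{1}$ on the window $[0,m+1]$, retaining every differential of $W$, in particular the wrap differential at both ends. The hypothesis $m\ge\ell(X\oplus Y)$ is used only to guarantee $d_{X'}^{m}=0=d_{Y'}^{m}$, which is exactly what makes the zero-extensions $\widetilde{\alpha}=(0,\alpha^{1},\dots,\alpha^{m},0)$ and $\widetilde{\beta}=(0,\beta^{1},\dots,\beta^{m},0)$ into chain maps with $\widetilde{\beta}\widetilde{\alpha}=f$ in $\mathbf{C}^{b}(\mathcal{A})$ (here $X,Y$ are viewed in $\mathbf{C}_{[0,m+1]}(\mathcal{A})$). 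Irreducibility of $f$ then splits this bounded factorization, and the transfer back to the periodic world is automatic for the right reason: a retraction $r$ of $\widetilde{\alpha}$ satisfies $r^{1}d_W^{m}=d_X^{0}r^{0}=0$, because the chain-map square at degree $0$ sees the wrap differential sitting next to the zero cell $X^{0}=0$; dually, a section $s$ of $\widetilde{\beta}$ satisfies $d_W^{m}s^{m}=s^{m+1}d_Y^{m}=0$. These are exactly the junction identities you could not produce; with them, restricting $r$ (resp.\ $s$) to degrees $1,\dots,m$ and extending periodically yields a retraction of $\alpha$ (resp.\ a section of $\beta$). So the missing idea is that the wrap differential of $W$ must be kept inside the bounded window, flanked by degrees where $X$ and $Y$ vanish, rather than discarded.
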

\begin{proof} Consider   $m\geq   \ell(X\oplus Y)$. The implication that if ${\mathcal F}_m(f)$ is irreducible in  $\mathbf{C}_{\equiv m}(\mathcal{A})$  then
$f$ is irreducible in $\C^{b}(\mathcal{A})$ holds by Lemma \ref{bycoro}.

Consider   $f:X \rightarrow Y$  an  irreducible in $\C^{b}(\mathcal{A})$. First, we prove the case that $f:X \rightarrow Y$ is a morphism in $\mathbf{C}_{[1,m]}(\mathcal{A})$  and $m= \ell(X\oplus Y).$

If $f:X \rightarrow Y$ is irreducible in $\C^{b}(\mathcal{A})$ then it is not a section neither a retraction.  Then by Corollary \ref{section} (2), we have that ${\mathcal F}_m(f):{\mathcal F}_m (X) \rightarrow {\mathcal F}_m(Y)$ neither so is.

Observe that that both  $  \ell(X)\leq   \ell(X\oplus Y) \leq m$ and $  \ell(Y)\leq   \ell(X\oplus Y) \leq m.$  By Proposition \ref{m>n} we get
${\mathcal F}_m (X)=X'  \in \mathcal{X}_{m} '$ and ${\mathcal F}_m (Y) =Y'  \in \mathcal{X}_{m}'.$

Assume that ${\mathcal F}_m(f)$ factors through a complex $Z$ in $\mathbf{C}_{\equiv m}(\mathcal{A})$. Then we have the following commutative diagram

\begin{displaymath}
\xymatrix {X'  \ar[d]^{g} &  X ^{1} \ar[r]^{d_{X}^{1}} \ar[d]^{g^{1}} & X ^{2}\ar[r]^{d_{X}^{2}} \ar[d]^{g^{2}}&  \cdots \ar[r] &  X ^{m} \ar[r]^{0}\ar[d]^{g^{m}}& X ^{1}\ar[d]^{g^{1}}  \\
Z \ar[d]^{h}& Z^{1} \ar[r]^{d_{Z}^{1}} \ar[d]^{h^{1}} & Z^2 \ar[r]^{d_{Z}^{2}}\ar[d]^{h^{2}}& \cdots \ar[r] & Z^{m} \ar[r]^{d_{Z}^{m}}\ar[d]^{h^{m}} & Z^{1}\ar[d]^{h^{1}}  \\
Y'& Y^{1} \ar[r]^{d_{Y}^{1}}  & Y^2 \ar[r]^{d_{Y}^{2}}& \cdots \ar[r] & Y^{m} \ar[r] & Y^{1} \\}
 \end{displaymath}

\noindent that induces the following commutative diagram in $\C^{b}(\mathcal{A})$.

\begin{displaymath}
\xymatrix {X  \ar[d]^{\widetilde{g}} & 0 \ar[r] \ar[d]&  X ^{1} \ar[r]^{d_{X}^{1}} \ar[d]^{g^{1}} & X ^{2}\ar[r]^{d_{X}^{2}} \ar[d]^{g^{2}}&  \cdots \ar[r] &  X ^{m} \ar[r]^{0}\ar[d]^{g^{m}}& 0 \ar[d]  \\
Z \ar[d]^{\widetilde{h}}& Z^{m} \ar[r]^{d_{Z}^{m}}\ar[d]^{0}& Z^{1} \ar[r]^{d_{Z}^{1}} \ar[d]^{h^{1}} & Z^2 \ar[r]^{d_{Z}^{2}}\ar[d]^{h^{2}}& \cdots \ar[r] & Z^{m} \ar[r]^{d_{Z}^{m}}\ar[d]^{h^{m}} & Z^{1}\ar[d]^{h^{1}}  \\
Y& 0 \ar[r]& Y^{1} \ar[r]^{d_{Y}^{1}}  & Y^2 \ar[r]^{d_{Y}^{2}}& \cdots \ar[r] & Y^{m} \ar[r]^{0} & 0 \\}
 \end{displaymath}

By hypothesis $f$ is irreducible in
$\C^{b}(\mathcal{A})$. Hence $\widetilde{g}$ is a section or $\widetilde{h}$ is a retraction. In case that $\widetilde{g}$ is a section  then by Corollary \ref{section} (2) we get that $g$ is a section. Similarly, if $\widetilde{h}$ is a retraction then again, by Corollary \ref{section} (2),  $h$ so is. Hence ${\mathcal F}_m(f)$  is irreducible, as we wish to prove.

Now, we analyze the case where $f:X \rightarrow Y$ is a morphism in $\mathbf{C}_{[1,m]}(\mathcal{A})$  and $m>  \ell(X\oplus Y).$ Assume that $\ell(X\oplus Y)=r.$

Knowing that $\C_{[1,r]} ({\mathcal A})$ is a full subcategory of $\C_{[1,m]} ({\mathcal A}) \subset \C^{b}(\mathcal{A}),$ then every morphism $f:X \rightarrow Y$   in $\mathbf{C}_{[1,r]}(\mathcal{A})$
 can  be consider  as a  morphism in $\C_{[1,m]} ({\mathcal A})$. Then, by the above argument we get that  ${\mathcal F}_m(f): {\mathcal F}_m(X) \rightarrow{\mathcal F}_m(Y)$ is irreducible in $\mathbf{C}_{\equiv m}(\mathcal{A})$ if and only if  $f:X \rightarrow Y$ is irreducible in $\C^{b}(\mathcal{A})$.

Finally, we analyze the general case, that is, when   $f:X \rightarrow Y$  is an irreducible morphism in $\mathbf{C}^{b}(\mathcal{A}).$

We know that there is an interval $[a,b]$ such that $X\oplus Y\in \mathbf{C}_{[a,b]}(\mathcal{A}).$
Then  $(X\oplus Y)[a]\in \mathbf{C}_{[1,n]}(\mathcal{A})$  and $f[a]$ is a morphism in $\mathbf{C}_{[1,n]}(\mathcal{A})$ with $ n=b-a=  \ell(X\oplus Y).$

We observe that $f$ is an irreducible morphism in $\mathbf{C}^{b}(\mathcal{A})$ if and only if  $f[a]$ so is. Moreover,  using that $\mathbf{C}_{\equiv m}(\mathcal{A})$ is closed under shifts, we have that  ${\mathcal F}_m(f[a])$ is irreducible in $\mathbf{C}_{\equiv m}(\mathcal{A})$  if and only if  ${\mathcal F}_m(f)$  is also irreducible. Therefore, if $m \geq  \ell(X\oplus Y)$ then  we conclude the result using the arguments done in the previous cases.
\end{proof}

\begin{rem}\label{0n+1} \emph{ Let  $f:X \rightarrow Y$ be a morphism in
  $\mathbf{C}_{[1,n]}(\mathcal{A}).$  We observe that $f$  is irreducible  in $\mathbf{C}^{b}(\mathcal{A})$  if and only if $f$ is irreducible in   $\mathbf{C}_{[0,n+1]}(\mathcal{A}).$  In fact,  if $f$ is irreducible in $\mathbf{C}^{b}(\mathcal{A})$ then it is irreducible  in any  interval.}

\emph{ Now, if $f$ is irreducible in   $\mathbf{C}_{[0,n+1]}(\mathcal{A})$ in order to prove that $f$ is irreducible in $\mathbf{C}^{b}(\mathcal{A})$
  it is enough to focus on the interval $[0,n+1]$, as we can  see from  the following diagram }
\begin{displaymath}
\xymatrix {X  \ar[d]^{\widetilde{g}} & 0 \ar[r] \ar[d]&  X ^{1} \ar[r]^{d_{X}^{1}} \ar[d]^{g^{1}} & X ^{2}\ar[r]^{d_{X}^{2}} \ar[d]^{g^{2}}&  \cdots \ar[r] &  X ^{n} \ar[r]^{0}\ar[d]^{g^{n}}& 0 \ar[d]  \\
Z \ar[d]^{\widetilde{h}}& Z^{0} \ar[r]^{d_{Z}^{0}}\ar[d]^{0}& Z^{1} \ar[r]^{d_{Z}^{1}} \ar[d]^{h^{1}} & Z^2 \ar[r]^{d_{Z}^{2}}\ar[d]^{h^{2}}& \cdots \ar[r] & Z^{n} \ar[r]^{d_{Z}^{n}}\ar[d]^{h^{n}} & Z^{n+1}\ar[d]^{h^{1}}  \\
Y& 0 \ar[r]& Y^{1} \ar[r]^{d_{Y}^{1}}  & Y^2 \ar[r]^{d_{Y}^{2}}& \cdots \ar[r] & Y^{n} \ar[r]^{0} & 0. \\}
 \end{displaymath}
\end{rem}
\vspace{.1in}

Bellow, we adapt the definition given  in  \cite{CGP2} of when a complex can be extended to the left or to the right  to any additive  category $\mathcal{A}$.

\begin{defi} Let $X=(X^i, d^i)$ be a complex in  $\mathbf{C}_{[1,n]}(\mathcal{A})$.  We say that $X$
 can be extended to the left if $d^1$ is not a monomorphism and that $X$ can be extended to the right if there is an object $X'^{n+1} \in \mathcal{A}$ and a non-zero morphism $d'^{n}: X^{n} \rightarrow X'^{n+1}$ such that $d'^{n} d^{n-1}=0$.
\end{defi}

Now, we present this useful result.

\begin{teo} \label{irre en Cn}   Let $f:X \to Y$ be a morphism in  $\mathbf{C}_{[1,n]}(\mathcal{A})$  where $X$ can not be extended to the left  and $Y$  can not be extended to the right.
If $m\geq  n$  then, the following conditions are equivalent.
  \begin{enumerate}
\item $f$  is irreducible in ${\C^{b}(\mathcal{A})}$.
\item $f$ is irreducible in   $\mathbf{C}_{[0,n+1]}(\mathcal{A})$.
\item   ${\mathcal F}_m(f) $ is irreducible in $\mathbf{C}_{\equiv m}(\mathcal{A})$.
\item  $f$  is irreducible  in $\mathbf{C}_{[1,n]}(\mathcal{A})$.
\end{enumerate}
\end{teo}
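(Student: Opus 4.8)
The plan is to prove the four conditions equivalent by combining two equivalences that are already available with one new equivalence whose content is concentrated in a single implication. Indeed, the equivalence (1)$\Leftrightarrow$(2) is exactly Remark \ref{0n+1}, which holds for any morphism of $\mathbf{C}_{[1,n]}(\mathcal{A})$ with no extra hypotheses. For (1)$\Leftrightarrow$(3) I would note that $X\oplus Y\in \mathbf{C}_{[1,n]}(\mathcal{A})$, so $\ell(X\oplus Y)\le n\le m$; hence Theorem \ref{irred} applies verbatim and gives that ${\mathcal F}_m(f)$ is irreducible in $\mathbf{C}_{\equiv m}(\mathcal{A})$ if and only if $f$ is irreducible in $\mathbf{C}^{b}(\mathcal{A})$. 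Thus (1), (2), (3) are already equivalent, and the whole task reduces to establishing (2)$\Leftrightarrow$(4).

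The implication (2)$\Rightarrow$(4) is the cheap half and uses neither the extension hypotheses nor $m$. Since $\mathbf{C}_{[1,n]}(\mathcal{A})$ is a full subcategory of $\mathbf{C}_{[0,n+1]}(\mathcal{A})$ containing both $X$ and $Y$, any factorization $f=hg$ with middle term in $\mathbf{C}_{[1,n]}(\mathcal{A})$ is in particular a factorization in $\mathbf{C}_{[0,n+1]}(\mathcal{A})$; moreover any retraction or section witnessed between objects of $\mathbf{C}_{[1,n]}(\mathcal{A})$ is realized by a morphism that, by fullness, already lies in $\mathbf{C}_{[1,n]}(\mathcal{A})$. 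As $f$ cannot be split in the smaller category without being split in the larger one, irreducibility descends from $\mathbf{C}_{[0,n+1]}(\mathcal{A})$ to $\mathbf{C}_{[1,n]}(\mathcal{A})$.

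The substance lies in (4)$\Rightarrow$(2), and this is where the hypotheses on $X$ and $Y$ are used. Assuming $f$ irreducible in $\mathbf{C}_{[1,n]}(\mathcal{A})$, I would take an arbitrary factorization $f=hg$ through some $Z\in \mathbf{C}_{[0,n+1]}(\mathcal{A})$ and pass to the truncation $\widetilde{Z}\in \mathbf{C}_{[1,n]}(\mathcal{A})$ keeping $Z^{1},\dots,Z^{n}$ with the differentials $d_{Z}^{1},\dots,d_{Z}^{n-1}$ and discarding $Z^{0}$ and $Z^{n+1}$. Because $X^{0}=X^{n+1}=Y^{0}=Y^{n+1}=0$, the components $g^{1},\dots,g^{n}$ and $h^{1},\dots,h^{n}$ assemble into chain maps $\widetilde{g}:X\rightarrow \widetilde{Z}$ and $\widetilde{h}:\widetilde{Z}\rightarrow Y$ with $\widetilde{h}\,\widetilde{g}=f$. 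Irreducibility in $\mathbf{C}_{[1,n]}(\mathcal{A})$ then forces $\widetilde{g}$ to be a section or $\widetilde{h}$ a retraction, and by Corollary \ref{section} the morphism $f$ is also non-split in $\mathbf{C}_{[0,n+1]}(\mathcal{A})$.

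The main obstacle is to lift such a splitting from $\widetilde{Z}$ back to $Z$, since the extended map need not commute with the extra differentials $d_{Z}^{0}$ and $d_{Z}^{n}$. If $\widetilde{s}:\widetilde{Z}\rightarrow X$ satisfies $\widetilde{s}\,\widetilde{g}=1_{X}$, I set $s^{i}=\widetilde{s}^{\,i}$ for $i\in[1,n]$ and $s^{0}=s^{n+1}=0$; the only chain-map identity not automatic is $s^{1}d_{Z}^{0}=0$ in degree $0$. Here the assumption that $X$ cannot be extended to the left, i.e.\ $d_{X}^{1}$ is a monomorphism, is decisive: from the chain-map relation $d_{X}^{1}\widetilde{s}^{\,1}=\widetilde{s}^{\,2}d_{Z}^{1}$ and $d_{Z}^{1}d_{Z}^{0}=0$ one gets $d_{X}^{1}(\widetilde{s}^{\,1}d_{Z}^{0})=\widetilde{s}^{\,2}d_{Z}^{1}d_{Z}^{0}=0$, whence $\widetilde{s}^{\,1}d_{Z}^{0}=0$ by injectivity of $d_{X}^{1}$, so $s$ is a morphism of $\mathbf{C}_{[0,n+1]}(\mathcal{A})$ with $sg=1_{X}$ and $g$ is a section. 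Dually, if $\widetilde{r}:Y\rightarrow \widetilde{Z}$ satisfies $\widetilde{h}\,\widetilde{r}=1_{Y}$, the extension $r$ is a chain map provided $d_{Z}^{n}\widetilde{r}^{\,n}=0$, and this follows from $Y$ not being extendable to the right, i.e.\ $d_{Y}^{n-1}$ an epimorphism: using $\widetilde{r}^{\,n}d_{Y}^{n-1}=d_{Z}^{n-1}\widetilde{r}^{\,n-1}$ and $d_{Z}^{n}d_{Z}^{n-1}=0$ gives $(d_{Z}^{n}\widetilde{r}^{\,n})d_{Y}^{n-1}=0$, hence $d_{Z}^{n}\widetilde{r}^{\,n}=0$, so $hr=1_{Y}$ and $h$ is a retraction. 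In either case $f$ is irreducible in $\mathbf{C}_{[0,n+1]}(\mathcal{A})$, which closes the cycle and proves the theorem.
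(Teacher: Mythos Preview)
Your proof is correct and follows essentially the same route as the paper: use Theorem \ref{irred} and Remark \ref{0n+1} to reduce everything to (4)$\Rightarrow$(2), truncate the middle term $Z$ to $[1,n]$, apply irreducibility in $\mathbf{C}_{[1,n]}(\mathcal{A})$, and then use that $d_X^1$ is a monomorphism (resp.\ $d_Y^{n-1}$ is an epimorphism, which is precisely the paper's ``$Y$ cannot be extended to the right'') to extend the splitting back to $\mathbf{C}_{[0,n+1]}(\mathcal{A})$. One minor quibble: your appeal to Corollary \ref{section} for the non-splitness of $f$ in $\mathbf{C}_{[0,n+1]}(\mathcal{A})$ is off-target, since that corollary concerns ${\mathcal F}_m$; the needed fact follows immediately from fullness of $\mathbf{C}_{[1,n]}(\mathcal{A})$ in $\mathbf{C}_{[0,n+1]}(\mathcal{A})$.
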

\begin{proof} By Theorem \ref{irred} and Remark \ref{0n+1} we get the equivalence between the tree first statements. Moreover (4) follows if  $f$  is irreducible in ${\C^{b}(\mathcal{A})}$. We observe that to prove that Statement(1) is equivalent with Statement(4), the hypothesis of $m\geq  n$ is not necessary.

Now,  if $f$  is irreducible  in $\mathbf{C}_{[1,n]}(\mathcal{A})$ and  factors trough a complex  $Z  \in \mathbf{C}_{[0,n+1]}(\mathcal{A})$ then we have the following situation:

\begin{displaymath}
\xymatrix {X  \ar[d]^{\widetilde{g}} & 0 \ar[r] \ar[d]&  X ^{1} \ar[r]^{d_{X}^{1}} \ar[d]^{g^{1}} & X ^{2}\ar[r]^{d_{X}^{2}} \ar[d]^{g^{2}}&  \cdots \ar[r] &  X ^{n} \ar[r]^{0}\ar[d]^{g^{n}}& 0 \ar[d]  \\
Z \ar[d]^{\widetilde{h}}& Z^{0} \ar[r]^{d_{Z}^{0}}\ar[d]^{0}& Z^{1} \ar[r]^{d_{Z}^{1}} \ar[d]^{h^{1}} & Z^2 \ar[r]^{d_{Z}^{2}}\ar[d]^{h^{2}}& \cdots \ar[r] & Z^{n} \ar[r]^{d_{Z}^{n}}\ar[d]^{h^{n}} & Z^{n+1}\ar[d]^{h^{1}}  \\
Y& 0 \ar[r]& Y^{1} \ar[r]^{d_{Y}^{1}}  & Y^2 \ar[r]^{d_{Y}^{2}}& \cdots \ar[r] & Y^{n} \ar[r]^{0} & 0 \\}
 \end{displaymath}

\noindent where either $\{g^1, g^2, \dots , g^n\}$ is a section or $\{h^1, h^2, \dots , h^n\}$ is a retraction in $\mathbf{C}_{[1,n]}(\mathcal{A}).$

In the former case, since $d_X^1$ is a monomorphism then we get that ${\widetilde{g}}$ is a section in $ \mathbf{C}_{[0,n+1]}(\mathcal{A}).$  In the latter case, we consider ${\overline{h}}^n :Y^n \rightarrow Z^n$ the morphism such that  $h^n{\overline{h}}^n=id.$ Then we have that $d_Z^{n}{\overline{h}}^n=0$. In fact, otherwise $d_Z^{n}{\overline{h}}^nd_Y^{n-1}=0$ and therefore  $Y$ can be extended to the right getting a contradiction with the hypothesis. Thus, we  conclude that ${\widetilde{h}}$ is a retraction in $ \mathbf{C}_{[0,n+1]}(\mathcal{A}).$
\end{proof}

\subsection{Density of the compression functor}
In this section, we show that the density is an invariant under stable categories. If $\mathcal{A}$ is an additive category, by \cite[Corollary 3.4]{Sa} we know that  $\C_{\equiv m}( \A)$  is a Frobenius category.
\vspace{.05in}

Now, we give the projective-injective $m-$periodic objects, first studied in \cite{B} and \cite{Z} for particular cases. The proof of this result follows from  Lemma 3.6  in \cite{Sa}, with a similar proof than the one done by D. Happel in \cite{H}.
\vspace{.05in}

\begin{prop} \label{K_P} Let $\mathcal A$ be an additive category and $m\geq 2$.  The projective-injective objects in the Frobenius category ${\C_{\equiv m}({\mathcal A})}$   with the graded split exact structure are, up to shift, the $m-$periodic objects  $K_M:  \dots \rightarrow M \stackrel{id_M}\rightarrow M  \stackrel{0}\rightarrow 0 \stackrel{m-2}\rightarrow 0 \rightarrow M \stackrel{id_M}\rightarrow M \rightarrow \dots$ given by  each  object $M \in \mathcal{A}.$
\end{prop}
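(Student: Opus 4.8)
The plan is to work directly with the graded split exact structure: a conflation in $\C_{\equiv m}(\mathcal{A})$ is a sequence of $m$-periodic complexes that is split exact in each degree, so projectivity of an object $P$ is exactness of $\mathrm{Hom}(P,-)$ on conflations, and injectivity is exactness of $\mathrm{Hom}(-,P)$. Since $\C_{\equiv m}(\mathcal{A})$ is Frobenius by \cite[Corollary 3.4]{Sa}, the classes of projectives and injectives coincide, so it is enough to control one of them. I would prove the proposition in two halves: first that each $K_M$, and hence each shift $K_M[s]$ (as $[s]$ is an exact autoequivalence), is projective-injective; and second that every projective-injective object is a direct summand of a finite direct sum of shifted $K_M$'s, which identifies the projective-injectives, up to shift and direct sums, with the $K_M$. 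This mirrors Happel's argument in \cite{H}.

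For the first half the key computation is a natural isomorphism obtained by evaluating at the degree in which $K_M$ is concentrated. Writing $K_M$ with $M$ in degrees $\equiv 1,2 \pmod m$ and $d^1=\mathrm{id}_M$, a chain map $f:K_M\to Y$ is determined by $f^1\in \mathrm{Hom}_{\mathcal{A}}(M,Y^1)$: the relation $f^2=d_Y^1 f^1$ is forced, and the only remaining constraint $d_Y^2 f^2 = d_Y^2 d_Y^1 f^1=0$ (and the wrap-around relation at $i=m$) holds automatically because $Y$ is a complex. Hence $\mathrm{Hom}_{\C_{\equiv m}(\mathcal{A})}(K_M,Y)\cong \mathrm{Hom}_{\mathcal{A}}(M,Y^1)$, naturally in $Y$. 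As the evaluation functor $Y\mapsto Y^1$ sends a graded split conflation to a split short exact sequence in $\mathcal{A}$, and $\mathrm{Hom}_{\mathcal{A}}(M,-)$ preserves such sequences, $\mathrm{Hom}(K_M,-)$ is exact on conflations; thus $K_M$ is projective, and injective by the Frobenius property (equivalently by the dual computation $\mathrm{Hom}(Y,K_M)\cong \mathrm{Hom}_{\mathcal{A}}(Y^2,M)$).

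For the second half, given an arbitrary $X=(X^i,d^i)\in \C_{\equiv m}(\mathcal{A})$ I would assemble the canonical object $P=\bigoplus_{i=1}^m K_{X^i}[\sigma_i]$, where $[\sigma_i]$ is the shift placing the disk of $X^i$ in degrees $i,i+1$, together with the map $\pi:P\to X$ whose restriction to the $i$-th summand corresponds under the adjunction above to $\mathrm{id}_{X^i}$. In degree $i$ only the summands indexed $i$ and $i-1$ contribute, and $\pi^i=(\mathrm{id}_{X^i},\,d_X^{i-1}):X^i\oplus X^{i-1}\to X^i$ is a split epimorphism with section $x\mapsto(x,0)$; hence $\pi$ is a deflation. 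If $X$ is projective this deflation splits, so $X$ is a direct summand of the sum of shifted $K$-objects $P$, which (grouping by shift, using $K_M\oplus K_N=K_{M\oplus N}$, and splitting idempotents in $\mathcal{A}$ where needed) is exactly a shifted $K_M$ up to direct sums.

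The main obstacle I anticipate is essentially bookkeeping in the periodic setting: checking that the obstruction to extending a degreewise map into or out of $K_M$ to a genuine chain map vanishes precisely by the complex identity $d^{i+1}d^i=0$, including at the wrap-around index $i=m$, and confirming that $\pi$ is a chain map split in every degree. The one conceptual point in the converse is passing from ``summand of a sum of shifted $K_M$'s'' to the stated form; I would handle this by observing that the projective-injectives form the additive closure of $\{K_M[s]\}$, which is what the phrase ``up to shift the $K_M$'' encodes, invoking idempotent-completeness of $\mathcal{A}$ as needed.
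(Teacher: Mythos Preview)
Your proposal is correct and follows precisely the Happel-style argument that the paper itself invokes (the paper gives no independent proof but refers to \cite[Lemma~3.6]{Sa} and Happel \cite{H}). The adjunction $\mathrm{Hom}_{\C_{\equiv m}(\mathcal{A})}(K_M,Y)\cong\mathrm{Hom}_{\mathcal{A}}(M,Y^1)$ and the canonical degreewise-split deflation $\bigoplus_{i=1}^m K_{X^i}[\sigma_i]\twoheadrightarrow X$ are exactly the ingredients used in those references, so your write-up is a faithful execution of the cited approach.
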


As a consequence we have the following result.

\begin{teo} \label{stdense1} Let $\A$ be an additive category and $m\geq 2.$

The compression functor ${\mathcal F}_m: \C^b (\mathcal{A}) \rightarrow  \C_{\equiv m} (\mathcal{A})$ is dense if and only if  the compression functor
 ${\mathcal F}_m: \underline {\C^b (\mathcal{A}) }\rightarrow  \underline{\C_{\equiv m} (\mathcal{A})}$ so is.
\end{teo}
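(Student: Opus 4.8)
The plan is to treat the two implications separately, noting first that a Frobenius category and its stable category have the \emph{same} objects, so in both statements ``dense'' means essential surjectivity and only the notion of isomorphism changes. For the forward implication, suppose ${\mathcal F}_m\colon\C^b(\A)\to\C_{\equiv m}(\A)$ is dense. Given $W\in\C_{\equiv m}(\A)$, density yields $X\in\C^b(\A)$ with an isomorphism $W\cong{\mathcal F}_m(X)$ in $\C_{\equiv m}(\A)$; since the canonical functor $\C_{\equiv m}(\A)\to\underline{\C_{\equiv m}(\A)}$ sends isomorphisms to isomorphisms, we get $\overline W\cong\overline{{\mathcal F}_m(X)}={\mathcal F}_m(\overline X)$ in the stable category, so $\underline{{\mathcal F}_m}$ is dense. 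This direction is immediate and uses only that a genuine isomorphism survives stabilization.

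The substance is the converse. Assume $\underline{{\mathcal F}_m}$ is dense and fix $W\in\C_{\equiv m}(\A)$. Stable density provides $X\in\C^b(\A)$ and morphisms $\alpha\colon{\mathcal F}_m(X)\to W$, $\beta\colon W\to{\mathcal F}_m(X)$ whose composites are identities in the stable category; in particular $1_W-\alpha\beta$ factors through a projective-injective object $Q$, say $1_W-\alpha\beta=yx$ with $x\colon W\to Q$ and $y\colon Q\to W$. Then $(\alpha\ \ y)\colon{\mathcal F}_m(X)\oplus Q\to W$ and $\binom{\beta}{x}\colon W\to{\mathcal F}_m(X)\oplus Q$ compose to $\alpha\beta+yx=1_W$, so $W$ is a direct summand of ${\mathcal F}_m(X)\oplus Q$. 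By Proposition~\ref{K_P} the object $Q$ is, up to shift, a sum of objects $K_M$; each $K_M$ equals ${\mathcal F}_m$ of the contractible bounded complex $M\xrightarrow{\mathrm{id}}M$ and ${\mathcal F}_m$ commutes with shifts and is additive, whence $Q\cong{\mathcal F}_m(Q_0)$ for some $Q_0\in\C^b(\A)$. Therefore $W$ is a direct summand of ${\mathcal F}_m(X\oplus Q_0)$, that is, of an object in the essential image of ${\mathcal F}_m$.

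It thus remains to prove the key lemma that the essential image of ${\mathcal F}_m$ is closed under direct summands, and this is the step I expect to be the main obstacle. I would establish it by the periodicity technique already used for Proposition~\ref{ind}: after a shift one may assume $X\oplus Q_0\in\C_{[1,n]}(\A)$, and since an $m$-periodic complex is in particular $mw$-periodic, the summand $W$ may be viewed inside $\C_{\equiv mw}(\A)$, where Lemma~\ref{descompos} decomposes ${\mathcal F}_m(X\oplus Q_0)$ as $\bigoplus_{j=0}^{w-1}{\mathcal F}_{mw}(X\oplus Q_0)[mj]$ with each summand lying in the image of the \emph{fully faithful} functor ${\mathcal F}_{mw}$ of Proposition~\ref{m>n}. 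The delicate point, and the reason the general additive (non Krull--Schmidt) setting is harder than in Proposition~\ref{ind}, is to recover $W$ as ${\mathcal F}_m$ of a bounded complex from the mere fact that it is a period-$m$ invariant summand of this $mw$-periodic sum: here the window characterization of $\mathcal{X}_{n,mw}'$ and the support/vanishing-differential argument of Proposition~\ref{m>n} show that such a summand automatically has the correct shape, while its invariance under the shift by $m$ forces it to be a full, rather than partial, ${\mathcal F}_m$-image; once this is in place, $W\cong{\mathcal F}_m(V)$ for some $V\in\C^b(\A)$ and ${\mathcal F}_m$ is dense.
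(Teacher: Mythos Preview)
Your forward implication is fine and coincides with the easy direction. Your strategy for the converse --- exhibit $W$ as a retract of $\mathcal{F}_m(X)\oplus Q$, identify $Q\cong\mathcal{F}_m(Q_0)$ via Proposition~\ref{K_P}, and then invoke closure of the essential image under direct summands --- is reasonable, and the first two steps are carried out correctly. The gap is in your sketch of the ``key lemma'' in the stated generality (arbitrary additive~$\A$). The vanishing-differential argument of Proposition~\ref{m>n} does give closure under summands when the bounded complex lies in $\C_{[1,s]}(\A)$ with $s\le m$, without any Krull--Schmidt hypothesis. But for $m<n$ your reduction writes $\mathcal{F}_m(Y)=\bigoplus_{j=0}^{w-1}\mathcal{F}_{mw}(Y)[mj]$ in $\C_{\equiv mw}(\A)$, and this is \emph{not} $\mathcal{F}_{mw}$ of a complex of length $\le mw$: the union of the supports of the shifted copies has length $n+m(w-1)>mw$, so Proposition~\ref{m>n} does not apply. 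Your further assertion that the $m$-periodicity of $W$ forces it to be a full $\mathcal{F}_m$-image is exactly the step that, in the proof of Proposition~\ref{ind}, used the Krull--Schmidt hypothesis to match $W$ against the summands $\mathcal{F}_{mw}(Y)[mj]$; without Krull--Schmidt a summand of a finite direct sum need not respect the given decomposition, and you give no substitute argument.

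By contrast, the paper's own proof is far terser than yours: it simply records that $\mathcal{F}_m$ carries the contractible two-term complexes onto the objects $K_M$ (and their shifts), so that the projective-injectives of the two Frobenius categories correspond under $\mathcal{F}_m$, and then asserts the conclusion. It does not isolate or address the summand-closure issue you identify; in effect it treats as routine the passage from a stable isomorphism to an honest isomorphism in the presence of matching projective-injectives. So your proposal is not wrong in spirit --- you have located precisely where the work lies --- but the justification you offer for that step does not go through as written in the non-Krull--Schmidt setting.
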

\begin{proof} By Proposition  \ref{K_P} and Proposition \ref{m>n}, we know that  if $M\in \mathcal{A}$  and $X:   M \stackrel{id_M}\rightarrow M  \in \C_{[1,2]} (\mathcal{A})$ then ${\mathcal F}_m(X)=K_M,$ and moreover ${\mathcal F}_m(X[i])=K_M[i]$ for $i=0,\dots, m-1$.
 Then the projective-injective objects in the Frobenius category ${ \C_{\equiv m}(\mathcal{A})}$   with the graded split exact structure correspond to the projective-injective objects in the Frobenius category ${\C^{b}(\mathcal{A})}$   with the graded split exact structure. Then we conclude the result.
 \end{proof}

\section{The Galois $G-$covering ${\mathcal F}_m$ }

Let $\A$ be an additive Hom-finite Krull-Schmidt category. Throughout this section, $\A$ verifies the following condition:

There is a positive integer $n\geq 2$ such that each indecomposable in $\C^{b}(\A)$ is a shift of an indecomposable in $\C_{[1,n]}(\A).$
\vspace{.1in}

Consider the compression functor ${\mathcal F}_m: \C^b (\mathcal{A}) \rightarrow  \C_{\equiv m} (\mathcal{A})$.

\begin{teo} \label{dense}
For every indecomposable $Z\in \C_{\equiv m} (\A)$ there is an indecomposable $\widehat{Z}\in \C_{[1,n]}(\A)$ and  an integer  $t \in \{0, \dots, m-1\}$ such that $Z= {\mathcal F}_{m}(\widehat{Z}[t]).$

In particular, we have ${\mathcal F}_m$ is dense.
\end{teo}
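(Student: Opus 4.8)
The plan is to reduce the statement to the case of a period that is large compared with the width bound $n$, and to settle that case by forcing a zero differential into the complex. Throughout I use that, since $\A$ is Hom-finite and Krull--Schmidt, every category $\C_{\equiv s}(\A)$ is again Hom-finite and hence Krull--Schmidt.

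First I would inflate the period. Fix $w\ge 1$ with $mw\ge 2n+2$ and regard the $m$-periodic complex $Z$ as an $mw$-periodic complex $\widetilde Z$ (its differentials simply repeat with period $m$), so that $\widetilde Z=\bigoplus_k V_k$ with each $V_k$ indecomposable in $\C_{\equiv mw}(\A)$. The compression functors are compatible: by Lemma \ref{descompos} the push-down $\pi\colon \C_{\equiv mw}(\A)\to\C_{\equiv m}(\A)$, $\pi(Y)=\bigoplus_{j=0}^{w-1}Y[mj]$, satisfies ${\mathcal F}_m=\pi\circ{\mathcal F}_{mw}$. Applying $\pi$ to $\widetilde Z$ and using that $\widetilde Z[mj]\cong Z$ as $m$-periodic complexes (a shift by a multiple of $m$ only puts a sign on the differentials, which an isomorphism absorbs) gives $\pi(\widetilde Z)\cong Z^{\oplus w}$.

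The heart of the argument, and the step I expect to be the main obstacle, is the claim that for $s\ge 2n+2$ every indecomposable $V\in\C_{\equiv s}(\A)$ has a zero differential $d_V^{i_0}=0$. I would argue by contradiction: if all differentials are nonzero then every $V^i\neq 0$, so $V$ has full support $\{1,\dots,s\}$. Cutting the wrap-around map $d_V^{s}$ yields a genuine bounded complex $\bar V\colon V^1\to\cdots\to V^s$ in $\C_{[1,s]}(\A)$ of width $s>2n$. By the standing hypothesis each indecomposable summand of $\bar V$ has width $\le n$, so in a decomposition $\bar V=\bigoplus_l\bar V_l$ some indecomposable summand $\bar V_{l_0}$ has support contained in $[2,s-1]$ (this is exactly where $mw\ge 2n+2$ is needed) and therefore meets neither position $1$ nor position $s$. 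Its idempotent $e_{l_0}\in\mathrm{End}_{\C^b(\A)}(\bar V)$ vanishes on $V^1$ and $V^s$, hence commutes with the reinstated wrap $d_V^{s}\colon V^s\to V^1$ as well as with $d_V^1,\dots,d_V^{s-1}$; thus $e_{l_0}$ is a nontrivial idempotent of $\mathrm{End}_{\C_{\equiv s}(\A)}(V)$, contradicting indecomposability. Once a gap is found, a shift moves it to position $s$, one period of $V$ becomes a bounded complex $\widehat V\in\C_{[1,s]}(\A)$ with ${\mathcal F}_{s}(\widehat V)\cong V$ by Proposition \ref{m>n}; since ${\mathcal F}_s$ reflects indecomposables, $\widehat V$ is indecomposable, and the hypothesis bounds its width by $n$, so $V\cong{\mathcal F}_{s}(\widehat V'[\,t'])$ with $\widehat V'\in\C_{[1,n]}(\A)$.

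Applying the claim with $s=mw$, each $V_k={\mathcal F}_{mw}(\widehat V_k[t_k])$ with $\widehat V_k\in\C_{[1,n]}(\A)$ indecomposable, so $\pi(V_k)={\mathcal F}_m(\widehat V_k[t_k])$, which is indecomposable by Proposition \ref{ind}. Comparing the two decompositions $Z^{\oplus w}\cong\pi(\widetilde Z)\cong\bigoplus_k{\mathcal F}_m(\widehat V_k[t_k])$ into indecomposables in the Krull--Schmidt category $\C_{\equiv m}(\A)$, the indecomposable $Z$ must be isomorphic to some summand ${\mathcal F}_m(\widehat V_{k_0}[t_{k_0}])$. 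Reducing $t_{k_0}$ modulo $m$ into $\{0,\dots,m-1\}$ and absorbing the leftover $[m]$-shift into $\widehat Z$ produces $Z={\mathcal F}_m(\widehat Z[t])$ as required; density is then immediate, since every object of $\C_{\equiv m}(\A)$ is a finite direct sum of indecomposables and ${\mathcal F}_m$ is additive. The delicate point throughout is the middle-summand step: one must be sure the width bound genuinely yields a summand of $\bar V$ avoiding both ends and that the resulting idempotent descends to the periodic category, which is why I enlarge the period to $mw\ge 2n+2$; the remainder is bookkeeping with shifts and the compatibility ${\mathcal F}_m=\pi\circ{\mathcal F}_{mw}$.
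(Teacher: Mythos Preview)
Your argument is correct and follows the same overall strategy as the paper: inflate the period so that it is large compared to the width bound $n$, show that in the large-period category every indecomposable must have a vanishing differential by unfolding to a bounded complex and exploiting the width bound, and then descend via Lemma~\ref{descompos} and Krull--Schmidt.

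The differences are purely organizational. The paper splits into the cases $m\ge n$ (handled directly) and $m<n$ (reduced to the first case with period $mw$, using the minimal $w$ with $mw\ge n$), whereas you treat all $m$ at once by choosing any $w$ with $mw\ge 2n+2$. For the ``zero differential'' step, the paper unrolls three consecutive periods of $Z$ into a bounded complex of length $3m$, finds a narrow indecomposable summand $\widehat X$ avoiding both endpoints, and exhibits an explicit section $\mathcal F_m(\widehat X)\hookrightarrow Z$; you instead cut a single period of length $s=mw$, find the same kind of interior summand, and observe that its idempotent in $\C^b(\A)$ commutes with the wrap-around differential (since it vanishes at both endpoints) and therefore already lives in $\C_{\equiv s}(\A)$. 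Your idempotent formulation is cleaner and makes the role of the threshold $s\ge 2n+2$ transparent, while the paper's three-period section argument stays closer to the explicit description of $\mathcal F_m$. The descent step is also phrased slightly differently: the paper writes $\widetilde Z=\mathcal F_{mw}(W)$ and notes that $\widetilde Z$ is a summand of $\mathcal F_m(W)$, whereas you use $\pi(\widetilde Z)\cong Z^{\oplus w}$; both come to the same Krull--Schmidt comparison.
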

\begin{proof} First, consider  $m \geq n$ and $Z$ an indecomposable complex in $\C_{\equiv m} (\mathcal A).$
If  for some integer $i \in \{1, \dots, m\}$ we have that $d_{Z}^{i}=0$ then we consider
 $i$ to be the biggest integer such that $Z^{i}\neq 0$ and $d_Z^i=0.$

Now, consider   the complex
$$\widehat{Z} :  \, Z^{i+1} \rightarrow  \dots \rightarrow Z^{m}\rightarrow Z^{1}\rightarrow \dots \rightarrow Z^{i}  \in \C_{[1,m]}(\A)$$

\noindent  Therefore, by Proposition \ref{m>n},
 $Z=  {\mathcal F}_{m}(\widehat{Z}[t])$  with $t=m-i  \in \{0, \dots, m-1\}.$ From the fact that $Z$ is indecomposable   we get that $\widehat{Z}$ is  indecomposable in $\C^{b}(\mathcal A)$.  Then by the property of the objects in  $\C^{b}(\mathcal A)$, we have that $\widehat{Z} \in \C_{[1,n]}(\mathcal A),$ up to shifts.

Now, we shall see that it is not possible to have an indecomposable complex $Z \in \C_{\equiv m} (\A)$  such that all the differentials $d_{Z}^{i}$  are non-zero.

Assume that there is  an $m-$periodic complex $Z$ as follows:

 $$Z : \dots \longrightarrow Z^{1}\stackrel{d_{Z}^{1}} \longrightarrow Z^{2}\stackrel{d_{Z}^{2}}\longrightarrow \dots \longrightarrow Z^{m}\stackrel{d_{Z}^{m}}\longrightarrow Z^{1}\stackrel{d_{Z}^{1}}\longrightarrow \dots$$

\noindent where all the differentials are non-zero.

Consider the complex in $\widehat{Z}  \in \C^{b}(\A)$ as follows:

 $$\widehat{Z} :0 \rightarrow Z^{1}\stackrel{d_{Z}^{1}} \longrightarrow Z^{2}\stackrel{d_{Z}^{2}}\longrightarrow \dots \longrightarrow Z^{m}\stackrel{d_{Z}^{m}}\longrightarrow Z^{m+1}\stackrel{d_{Z}^{1}} \longrightarrow   \dots \longrightarrow Z^{2m}\longrightarrow   Z^{1}\stackrel{d_{Z}^{2m+1}} \longrightarrow  \dots \longrightarrow Z^{3m}\rightarrow 0$$

\noindent where $Z^{2m+i}=Z^{m+i}=Z^i$ for $i=1, 2, \cdots, m$. Since $\widehat{Z}$ has $3m$ non-zero entries and    $3m \geq 3n \geq 2n + 2>n$ then $\widehat{Z}$ is not an indecomposable complex in $\C^{b}(\mathcal A)$. Furthermore, we must have an indecomposable direct summand $\widehat{X}$ of $\widehat{Z}$ having a zero in  the first entry  and also in the $3m-$entry, as follows:

$$\widehat{X}: 0 \rightarrow 0 \rightarrow \cdots X^i \stackrel{d_{X}^{i}}\longrightarrow X^{i+1}\stackrel{d_{X}^{i+1}}\longrightarrow \dots \longrightarrow X^{i+n}\stackrel{d_{X}^{i+n}}\longrightarrow 0 \cdots $$
\vspace{.1in}

\noindent with $1<i<n+2\leq 2n\leq 2m< 3m-n$ and $X^i \neq 0$. Then there is a section

{\tiny
\begin{displaymath}
\xymatrix { Z^{1} \ar[r]&  \cdots \ar[r] & Z^{i-1}\ar[r]  & Z^{i}\ar[r] & \cdots \ar[r] & Z^{n +i}  \ar[r] & \cdots\ar[r] & Z^{m+i-1}\ar[r] &\cdots \ar[r] & Z^{3m} \\
  0 \ar[r] \ar[u] & \cdots \ar[r] & 0 \ar[u]\ar[r]&  X^{i}\ar[u]\ar[r] & \cdots \ar[r] & X^{n +i}\ar[u]  \ar[r]& \cdots\ar[r]   & 0 \ar[u]\ar[r]&\cdots \ar[r] & 0\ar[u] \\}
 \end{displaymath}
 }

\noindent in $\C^{b}(\A)$, and as we show below, we get that  $\mathcal{F}_m(\widehat{X})$ is a direct summand of ${Z}$ in $\mathbf{C}_{\equiv m} (\A)$.

{\tiny{
\begin{displaymath}
\xymatrix {  Z^{1} \ar[r]&  \cdots \ar[r]& Z^{i+n-j+1} \ar[r] & Z^{i+n-j+2} \ar[r]& \cdots \ar[r]&Z^{i-1}  \ar[r] & Z^{i}  \ar[r]& \cdots \ar[r] & Z^1 \\
             X^j \ar[r] \ar[u]  & \cdots\ar[r] & X^{i+m} \ar[r]\ar[u] &0  \ar[r]\ar[u]& \cdots \ar[r]&0  \ar[r]\ar[u] & X^i \ar[r]\ar[u]& \cdots \ar[r] & X^j\ar[u] \\}
 \end{displaymath} }}
\vspace{.1in}

\noindent getting a contradiction to the fact that ${Z}$ is indecomposable. Therefore, we prove that  for some $i \in \{1, \dots, m \}$ we have  that $d_{Z}^{i}=0$. Therefore, we get that the functor  $\mathcal{F}_m$ is dense, whenever $m \geq n$.

Now, assume that $m< n$ and consider $w$  the positive integer such that $ w=\mbox{min} \{s \geq 1, \, \mbox{such that }   ms \geq n \}.$

From the above arguments  and using that $mw \geq n,$ we have that  ${\mathcal F}_{m w}: \C^b (\mathcal{A}) \rightarrow  \C_{\equiv m w} (\mathcal{A})$ is dense. Consider  $Y\in  \C_{\equiv m} (\mathcal{A})$ an indecomposable $m-$complex. Observe that every $m-$periodic complex is in fact a $mw-$periodic complex.
Therefore, there is $Z \in  \C^b (\mathcal{A}) $ such that $Y={\mathcal F}_{m w}(Z).$
By Lemma \ref{descompos} we have that ${\mathcal F}_m(Z) =  \oplus_ {j\in [0,w-1] } {\mathcal F}_{mw}(Z)[mj] $. Then  $Y$ is a direct summand of   ${\mathcal F}_m(Z)$  in $\C^b (\mathcal{A}).$

Consider $Z=  \oplus_ {j\in L } Z_j$   the indecomposable decomposition  of $Z$ in $\C^b (\mathcal{A}).$  Using that  ${\mathcal F}_m$ is additive, we have that ${\mathcal F}_m(Z) =  \oplus_ {j\in L } {\mathcal F}_m(Z_j).$ Moreover, for each $ {j\in L }$ we know that ${\mathcal F}_m(Z_j)$ is indecomposable in $ \C_{\equiv m} (\mathcal{A})$ because of Proposition \ref{ind}. Then $Y={\mathcal F}_m(Z_j)$ for some $j\in L$  since  $\mathcal{A}$ is a Krull-Schmidt category.
Using that $Y$ is $m-$periodic and the property of the indecomposable complexes in $\C^{b}(\mathcal A)$ we infer that there is a $\widehat{Z_j} \in \C_{[1,n]}(\mathcal A)$ such that $Z_j=\widehat{Z_j} [t]$ for some $t \in \{0, \dots, m-1\}.$
Then we conclude the result.
\end{proof}

Now, we are in position to prove that ${\mathcal F}_m$ is  a Galois $G-$covering for  each $m.$

 \begin{teo}  \label{Gcov}
The compression functor
   ${\mathcal F}_m: \C^b (\mathcal{A}) \rightarrow  \C_{\equiv m} (\mathcal{A})$ is  a Galois $G-$covering in the sense of \cite{BL},   for  each $m.$
\end{teo}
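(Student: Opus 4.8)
The plan is to verify, one by one, the four defining conditions of a Galois $G$-covering from Definition \ref{Gal}, using the group $G$ generated by the shift $[m]$, which was already shown in \ref{pre-cov} to act admissibly (freely and locally boundedly) on $\C^b(\A)$. The bulk of the work has in fact been set up in the preceding results, so the proof will be largely a matter of assembling them in the right order.

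First I would recall that by Proposition \ref{precov} the functor ${\mathcal F}_m$ is a $G$-precovering with $\delta=\mathrm{id}$, which is the ambient requirement for the notion. It then remains to check conditions (a), (b) and (c) of Definition \ref{Gal}. For condition (a), almost density, I would invoke Theorem \ref{dense} directly: it states that every indecomposable $Z\in\C_{\equiv m}(\A)$ is of the form ${\mathcal F}_m(\widehat{Z}[t])$ for an indecomposable $\widehat{Z}\in\C_{[1,n]}(\A)$ and some $t\in\{0,\dots,m-1\}$, so in particular $Z$ lies in the image of ${\mathcal F}_m$, giving almost density (indeed full density). For condition (b), that ${\mathcal F}_m$ sends indecomposables to indecomposables, I would cite Proposition \ref{ind}, which asserts precisely that ${\mathcal F}_m$ preserves (and reflects) indecomposable complexes, using that $\A$ is Krull--Schmidt.

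The one genuinely new point is condition (c): if $X,Y\in\C^b(\A)$ are indecomposable with ${\mathcal F}_m(X)={\mathcal F}_m(Y)$, then $Y=X[mj]$ for some $j$. This is the step I expect to be the main obstacle, since it is the only condition not packaged verbatim in an earlier statement. The natural route is to use the $G$-precovering isomorphism of Proposition \ref{precov}: an isomorphism ${\mathcal F}_m(X)\xrightarrow{\sim}{\mathcal F}_m(Y)$ corresponds, via
$$\mathrm{Hom}_{\C_{\equiv m}(\A)}({\mathcal F}_m(X),{\mathcal F}_m(Y))\;\cong\;\bigoplus_{j\in\mathbb Z}\mathrm{Hom}_{\C^b(\A)}(X,Y[mj]),$$
to a finite family $(u_j)_{j}$ of morphisms $u_j:X\to Y[mj]$ in $\C^b(\A)$. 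I would then argue that, since $X$ and each $Y[mj]$ are indecomposable and the images $\{{\mathcal F}_m(u_j)\delta_{(j,Y)}\}$ sum to an isomorphism, at least one summand $u_{j_0}$ must be such that ${\mathcal F}_m(u_{j_0})$ is not in the radical; by Corollary \ref{section}(2) and Proposition \ref{ind} this forces $u_{j_0}:X\to Y[mj_0]$ to be an isomorphism between indecomposables, whence $X\cong Y[mj_0]$, i.e. $Y\cong X[-mj_0]=([m]^{-j_0}).X$, which is exactly the required $g\in G$ with $Y=g.X$.

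The delicate part of condition (c) is justifying that a single summand is invertible rather than merely that the collection sums to an isomorphism; here I would rely on the local finiteness of the direct sum (only finitely many $j$ contribute, by boundedness of the complexes, as in Proposition \ref{precov}) together with the Krull--Schmidt and locally bounded hypotheses, so that the identity of ${\mathcal F}_m(X)$ decomposes compatibly and an indecomposability argument via Corollary \ref{section} pins down the invertible component. Once (a), (b) and (c) are in hand, all the conditions of Definition \ref{Gal} hold for every $m$, and the theorem follows.
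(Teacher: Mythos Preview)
Your proposal is correct and follows the paper's overall scaffolding exactly: conditions (a) and (b) of Definition~\ref{Gal} are handled by the same citations (Theorem~\ref{dense} and Proposition~\ref{ind}), and the $G$-precovering hypothesis is supplied by Proposition~\ref{precov}.

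The only point of divergence is the verification of condition (c). The paper argues directly from the explicit description ${\mathcal F}_m(X)=\bigoplus_{j\in\mathbb Z}X[mj]$ viewed as an unbounded complex: equality ${\mathcal F}_m(X)={\mathcal F}_m(Y)$ yields two indecomposable decompositions $\bigoplus_j X[mj]=\bigoplus_j Y[mj]$, and a Krull--Schmidt matching produces $j_0,j_1$ with $X[mj_0]=Y[mj_1]$. Your route instead unpacks an isomorphism ${\mathcal F}_m(X)\xrightarrow{\sim}{\mathcal F}_m(Y)$ through the precovering Hom-isomorphism as a finite sum $\sum_j{\mathcal F}_m(u_j)$, and uses that the radical of $\mathrm{Hom}({\mathcal F}_m(X),{\mathcal F}_m(Y))$ is an additive subgroup (both objects being indecomposable by Proposition~\ref{ind}) to extract an invertible component $u_{j_0}$ via Corollary~\ref{section}(2). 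Your argument is the general one behind \cite[Lemma~2.6]{BL} and works verbatim for any $G$-precovering of Krull--Schmidt categories, whereas the paper's shortcut exploits the concrete shape of ${\mathcal F}_m$; both are valid, and yours is arguably cleaner in that it does not require justifying Krull--Schmidt for the infinite (but locally finite) direct sum inside $\C(\A)$.
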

\begin{proof} The functor  ${\mathcal F}_m: \C^b (\mathcal{A}) \rightarrow  \C_{\equiv m} (\mathcal{A})$ is almost dense since by   Theorem \ref{dense} we have that the functor ${\mathcal F}_{m}$ is dense.

On the other hand, if $X \in  \C^{b}(\A)$ is indecomposable, then by Proposition \ref{ind}, we know that  ${\mathcal F}_{m}(X)$ is indecomposable in $\mathbf{C}_{\equiv m} (\A)$.

Consider  $X, Y\in  \A$  indecomposable complexes  in $\C^{b}(\A)$ with ${\mathcal F}_{m}(X) = {\mathcal F}_{m}(Y)$. If $X$ is an  indecomposable complex  in $\C^{b}(\A)$ then $X[mj]$ either so is. Since ${\mathcal F}_{m}(X)  = {\mathcal F}_{m}(Y)$ then we have that

$$ {\mathcal F}_{m}(X) = \oplus_{j \in \mathbb{Z}} X[m j] =  \oplus_{j \in \mathbb{Z}} Y[m j] = {\mathcal F}_{m}(Y).$$

\noindent Then there exist integers $j_{0}$ and $j_1$  such that $X [m j_{0}] = Y[m j_{1}]$. Therefore, $X = Y[m(j_{0}- j_1]$,  proving that ${\mathcal F}_{m}$ is a Galois $G$-covering.
\end{proof}

As an immediate consequence of Theorem \ref{Gcov} we get the following corollary.

\begin{coro}
The functor ${\mathcal F}_m $   gives a bijective map between the sets $\emph{Ind} (\C_{\equiv m}(\A))$  and
$\cup_{k=0}^{m-1} \emph{Ind} (\C_{[1,n]}(\A)[k]).$
\end{coro}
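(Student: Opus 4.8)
The plan is to show that the restriction of ${\mathcal F}_m$ to $\bigcup_{k=0}^{m-1}\mathrm{Ind}(\C_{[1,n]}(\A)[k])$ is a well-defined bijection onto $\mathrm{Ind}(\C_{\equiv m}(\A))$, reading off the three needed properties directly from the three conditions of a Galois $G$-covering verified in Theorem \ref{Gcov}. For well-definedness, note that every object $V[k]$ with $V\in \C_{[1,n]}(\A)$ indecomposable is indecomposable in $\C^{b}(\A)$, so by Proposition \ref{ind} its image ${\mathcal F}_m(V[k])$ is indecomposable in $\C_{\equiv m}(\A)$; since ${\mathcal F}_m$ is a functor it respects isomorphism classes, so the assignment genuinely maps the source set into $\mathrm{Ind}(\C_{\equiv m}(\A))$.

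Surjectivity is immediate from Theorem \ref{dense}: given an indecomposable $Z\in \C_{\equiv m}(\A)$, that theorem produces an indecomposable $\widehat{Z}\in \C_{[1,n]}(\A)$ and an integer $t\in\{0,\dots,m-1\}$ with $Z={\mathcal F}_m(\widehat{Z}[t])$, and $\widehat{Z}[t]$ lies in the source set. For injectivity, suppose ${\mathcal F}_m(V_1[k_1])={\mathcal F}_m(V_2[k_2])$ with $V_1,V_2\in \C_{[1,n]}(\A)$ indecomposable and $k_1,k_2\in\{0,\dots,m-1\}$. By condition (c) of Definition \ref{Gal}, verified in Theorem \ref{Gcov}, there is $g\in G=\langle [m]\rangle$ with $V_2[k_2]=g\cdot(V_1[k_1])=V_1[k_1+mj]$ for some $j\in\mathbb{Z}$, whence $V_2=V_1[c]$ with $c=k_1-k_2+mj$. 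I would then argue that $c=0$, and from $c=0$ together with $|k_1-k_2|\le m-1<m$ one gets $j=0$ and $k_1=k_2$, so $V_1[k_1]=V_2[k_2]$ and the map is injective.

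The main obstacle is exactly this last step $c=0$. Since shifting by $m$ acts as the identity on $\C_{\equiv m}(\A)$, two formally distinct members of the source set can a priori lie in the same $G$-orbit and hence collide under ${\mathcal F}_m$; to rule this out I must know that the $\C_{[1,n]}(\A)$-representative of an indecomposable of $\C^{b}(\A)$ is unique. This is where I would invoke the normalization already used in the proof of Theorem \ref{dense}, namely taking the representative $\widehat{Z}$ with nonzero first and last entries: comparing supports shows that a nonzero internal shift $V_1[c]$ of such a normalized indecomposable cannot remain supported in $[1,n]$ with nonzero endpoints, forcing $c=0$. With this normalization the union over $k\in\{0,\dots,m-1\}$ of the shifted categories $\C_{[1,n]}(\A)[k]$ is a genuine transversal for the $G$-action on $\mathrm{Ind}(\C^{b}(\A))$, and the Galois condition (c) guarantees that no further identifications occur, completing the bijection.

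Assembling the three parts, ${\mathcal F}_m$ carries $\bigcup_{k=0}^{m-1}\mathrm{Ind}(\C_{[1,n]}(\A)[k])$ bijectively onto $\mathrm{Ind}(\C_{\equiv m}(\A))$, which is the asserted statement.
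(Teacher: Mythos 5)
Your overall strategy is exactly the paper's: the paper's entire proof of this corollary is the one-line assertion that ``there is only one indecomposable if we take a shift in the interval $[0,m-1]$,'' i.e.\ that the stated union is a transversal for the action of $G=\langle [m]\rangle$, and you expand this into the three natural checks (well-definedness via Proposition \ref{ind}, surjectivity via Theorem \ref{dense}, injectivity via condition (c) of the Galois covering property from Theorem \ref{Gcov}). You also correctly isolate the crux, namely the step $c=0$.

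However, the way you close that step does not prove the statement as literally written --- and in fact no argument can, because the literal statement fails. The set $\mathrm{Ind}(\C_{[1,n]}(\A))$ contains indecomposables whose support is a proper subinterval of $[1,n]$, and these admit nonzero shifts that stay inside the allowed window, so the union $\bigcup_{k=0}^{m-1}\mathrm{Ind}(\C_{[1,n]}(\A)[k])$ is not a transversal of the $G$-action. Concretely, take $\A=\mathrm{proj}\,k$, $n=m=2$, and let $S_j$ denote the stalk complex $k$ concentrated in degree $j$. Then $S_2\in \mathrm{Ind}(\C_{[1,2]}(\A))$ and $S_0=S_2[2]\in \mathrm{Ind}(\C_{[1,2]}(\A)[1])$ are non-isomorphic members of the union, yet ${\mathcal F}_2(S_0)={\mathcal F}_2(S_2)[2]={\mathcal F}_2(S_2)$, so ${\mathcal F}_2$ is not injective on the union (indeed here the union has five elements while $\C_{\equiv 2}(\A)$ has four indecomposables). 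Your normalization (representatives with nonzero end terms) is the right repair, but it silently replaces the source set: your $c=0$ argument applies only to pairs of normalized representatives, and membership in $\mathrm{Ind}(\C_{[1,n]}(\A)[k])$ does not guarantee normalization. So your closing claim that ``the union over $k$ of the shifted categories $\C_{[1,n]}(\A)[k]$ is a genuine transversal'' is precisely where the gap sits. To be fair, this imprecision is inherited from the paper: its one-line proof makes the same unjustified uniqueness claim, and the corollary is correct only if one reads $\mathrm{Ind}(\C_{[1,n]}(\A))$ as consisting of the indecomposables $V$ with $V^{1}\neq 0$; under that reading your argument is complete, and it is in fact more rigorous than the paper's own proof.
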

\begin{proof} The bijective map follows from the fact that there is only one indecomposable if we take a shift in the interval $ [0,m-1].$
\end{proof}

We recall the definition of strong global dimension given in \cite{Sk}, for a finite dimensional algebra over a field.

\begin{defi} Let $A$ be a finite dimensional algebra over a field. %For a complex $X \in K^{b}(\emph{proj}\, A)$,
%$$\dots \rightarrow 0 \rightarrow X^{r}\rightarrow X^{r+1}\rightarrow \dots \rightarrow  X^{s-1}\rightarrow  X^{s} \rightarrow 0\rightarrow \dots$$
%\noindent with $X^{r}\neq  0$ and  $X^{s}\neq  0$ we say that the length of $X$, denoted by $\ell(X)$,  is equal to $s-r$. 
The strong global dimension of $A$ denoted by $s.gl. \emph{dim}\, A$ is %defined as follows
$\emph{sup}\{\, \ell(X) \mid X \in  K^{b}(\emph{proj}\, A) \text{\, is indecomposable}\}.$
\end{defi}

The previous results,    allow us to state the next result for a finite dimensional $k$-algebra $A$ with finite strong global dimension, since $\A= \mbox{proj}\, A$ satisfies the property stated at the beginning of Section 3.
\vspace{.05in}

In general,  for  a finite dimensional $k$-algebra  with $s. gl. \mbox{dim}\, A = \infty$ the compression functor ${\mathcal F}_m: \C^b (\mbox{proj}\,A)\rightarrow \mathbf{C}_{\equiv m} (\mbox{proj}\,A)$  is not dense, as we can see in \cite[Proposition 5.4 and  Theorem 5.5]{S-18}.

\begin{teo} \label{dense1} If $A$ is a  finite dimensional $k$-algebra over a field $k$ with  $s.gl.\emph{dim} \, A =\nu$  then the following statements hold.
\begin{enumerate}
\item The compression functor  ${\mathcal F}_m: \C^b (\emph{proj}\, A) \rightarrow   \C_{\equiv m}(\emph{proj}\, A)$ is a Galois $G$-covering in the sense of \cite{BL}.
 \item   There is a bijective correspondence between the sets $\, \emph{Ind}( \C_{\equiv m}(\emph{proj} \, A) )\, $ and
 \newline $\bigcup _{k=0}^{m-1} \emph{Ind}( \C_{[1, \nu +1]}(\emph{proj} \, A)[k])$.
\end{enumerate}
\end{teo}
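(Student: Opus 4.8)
The plan is to deduce Theorem \ref{dense1} as a direct specialization of the general results already proved for an abstract category $\A$ satisfying the standing hypothesis of Section 3. The entire content of the argument is to verify that $\A = \mbox{proj}\, A$ satisfies that hypothesis when $A$ has finite strong global dimension, and then to invoke Theorem \ref{Gcov} and its Corollary verbatim.

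First I would recall that $\mbox{proj}\, A$ is a Hom-finite Krull-Schmidt additive $k$-category, so the standing assumptions of Section 3 (Hom-finite, Krull-Schmidt) are automatic. The key step is to check the crucial condition: \emph{there is an integer $n \geq 2$ such that every indecomposable in $\C^b(\mbox{proj}\, A)$ is a shift of an indecomposable in $\C_{[1,n]}(\mbox{proj}\, A)$}. This is exactly where the finiteness of the strong global dimension enters. By definition, $s.gl.\mbox{dim}\, A = \nu$ means $\nu = \sup\{\ell(X) \mid X \in \K^b(\mbox{proj}\, A) \text{ indecomposable}\}$, and since $\underline{\C^b(\mbox{proj}\, A)} \simeq \K^b(\mbox{proj}\, A)$ (identified at the start of Section 1), the indecomposable objects of $\C^b(\mbox{proj}\, A)$ that are not projective-injective correspond to indecomposables of $\K^b(\mbox{proj}\, A)$, each of size at most $\nu$. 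Every indecomposable complex $X \in \C^b(\mbox{proj}\, A)$ therefore has $\ell(X) \leq \nu$, so up to applying a suitable shift $[t]$ we may assume $X \in \C_{[1,\nu]}(\mbox{proj}\, A) \subseteq \C_{[1,\nu+1]}(\mbox{proj}\, A)$. Thus the standing condition of Section 3 holds with $n = \nu + 1$ (taking $n = \nu+1 \geq 2$ to accommodate the case $\nu = 1$ and to match the interval appearing in the statement).

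Having verified the hypothesis, statement (1) follows immediately from Theorem \ref{Gcov}, which asserts that under the standing condition the compression functor ${\mathcal F}_m: \C^b(\A) \rightarrow \C_{\equiv m}(\A)$ is a Galois $G$-covering for every $m$; applying this with $\A = \mbox{proj}\, A$ gives that ${\mathcal F}_m: \C^b(\mbox{proj}\, A) \rightarrow \C_{\equiv m}(\mbox{proj}\, A)$ is a Galois $G$-covering. Statement (2) follows at once from the Corollary to Theorem \ref{Gcov}, which produces the bijection between $\mbox{Ind}(\C_{\equiv m}(\A))$ and $\bigcup_{k=0}^{m-1}\mbox{Ind}(\C_{[1,n]}(\A)[k])$; substituting $\A = \mbox{proj}\, A$ and $n = \nu + 1$ yields the asserted bijective correspondence between $\mbox{Ind}(\C_{\equiv m}(\mbox{proj}\, A))$ and $\bigcup_{k=0}^{m-1}\mbox{Ind}(\C_{[1,\nu+1]}(\mbox{proj}\, A)[k])$.

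I do not expect any serious obstacle here, since the heavy lifting was done in Theorems \ref{dense} and \ref{Gcov}. The one point requiring care is the translation between indecomposables of $\C^b(\mbox{proj}\, A)$ and those of $\K^b(\mbox{proj}\, A) \simeq \underline{\C^b(\mbox{proj}\, A)}$ used to bound the size: one must note that the only indecomposables lost in passing to the stable category are the projective-injective objects $K_M$-type complexes (contractible ones of the form $M \stackrel{id}\rightarrow M$ in size $2$), which already have size bounded by $2 \leq \nu+1$, so the size bound $n = \nu+1$ is uniform over \emph{all} indecomposables of $\C^b(\mbox{proj}\, A)$, not merely the non-projective-injective ones. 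With that observation in place, the theorem is a formal consequence of the established machinery.
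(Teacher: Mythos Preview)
Your proposal is correct and follows exactly the paper's approach: the paper does not give a formal proof of Theorem~\ref{dense1} but simply observes, in the sentence preceding the statement, that $\A=\mbox{proj}\,A$ satisfies the standing hypothesis of Section~3 when $s.gl.\mbox{dim}\,A=\nu<\infty$, so that Theorem~\ref{Gcov} and its Corollary apply directly with $n=\nu+1$. Your write-up adds welcome detail (handling the projective-injective complexes separately when passing between $\C^b$ and $\K^b$), though note that with the paper's convention $\ell(X)$ is the \emph{width} of the interval, so indecomposables have size $\le \nu+1$ rather than $\le \nu$; this is why $n=\nu+1$ is forced and not merely a convenience.
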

\vspace{.05in}

\subsection{Dualizing $k-$categories}

Dualizing $k-$categories were introduced by M. Auslander and I. Reiten in \cite{AR} as a generalization of artin $k$-algebras.

Let $\A$ be a Hom-finite Krull-Schmidt $k$-category. By $\mbox{Mod}\,  \A$ we denote the category of contravariant functors $F : \A \rightarrow \mathrm{Mod} \, k$ and by  $\mbox{mod}\, \A$  the full subcategory of $\mbox{Mod}\,  \A$ whose objects are the finitely presented functors.

The category $\mathcal{A}$ is called dualizing  if the finitely presented objects and the
 finitely copresented objects in $\mathrm{Mod}\,\mathcal{A}$ coincide, see \cite{AR} and \cite{G}.
If $\mathcal{A}$ is  dualizing, $\mathrm{mod} \, \mathcal{A}$ is an abelian category with enough projective objects. Therefore  the category $\mathbf{C}^{b}(\mathrm{mod} \,\mathcal{A})$ is also abelian.

From \cite{BSZ} we know that  if $\mathcal{A}$ is a dualizing category, then the category $\mathbf{C}^{b}(\mathrm{mod}\, \mathcal{A})$ so is and the usual exact structure $\mathcal{F}$ on $\mathbf{C}^{b}(\mathrm{mod}\, \mathcal{A})$ has almost split sequences. Moreover, $\mathbf{C}_{[1,n]}(\mathcal{A})$ has almost split sequences. If $\mathrm{mod}\, \mathcal{A}$  has finite global dimension then $\mathbf{C}^{b}( \mathcal{A})$  has almost split sequences, see for example \cite{HZ}.
\vspace{.1in}

In all that follows,  $\A$ shall be a  Hom-finite Krull-Schmidt $k-$category
 verifying that there is a positive integer $n\geq 2$ such that each indecomposable complex in $\C^{b}(\A)$  is a shift of an indecomposable  complex in $\C_{[1,n]}(\A).$

\begin{teo}
Let $\A$ be a  dualizing category such that $\mathrm{mod}\, \mathcal{A}$ has finite global dimension. Then $ \C_{\equiv m} (\mathcal{A})$ has almost split sequences.
\end{teo}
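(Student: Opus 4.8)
The plan is to obtain the almost split sequences in $\C_{\equiv m}(\A)$ by transporting them along the compression functor from $\C^b(\A)$, using that $\mathcal{F}_m$ is a Galois $G$-covering together with the fact from \cite{BL} that such coverings preserve almost split sequences. Concretely, I would assemble three ingredients already available. First, since $\A$ is dualizing and $\mathrm{mod}\,\A$ has finite global dimension, $\C^b(\A)$ has almost split sequences by \cite{HZ} (via \cite{BSZ}, which gives that the Frobenius exact structure on $\C^b(\mathrm{mod}\,\A)$ has almost split sequences, whence so does $\C^b(\A)$). Second, by Theorem \ref{Gcov} the functor $\mathcal{F}_m:\C^b(\A)\to\C_{\equiv m}(\A)$ is a Galois $G$-covering with $G=\langle[m]\rangle$. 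Third, by \cite{BL} a Galois $G$-covering between Krull-Schmidt categories sends almost split sequences to almost split sequences.

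For the core argument I would take an indecomposable non-projective object $Z\in\C_{\equiv m}(\A)$. By Theorem \ref{dense} there are an indecomposable $\widehat{Z}\in\C_{[1,n]}(\A)$ and $t\in\{0,\dots,m-1\}$ with $Z=\mathcal{F}_m(\widehat{Z}[t])$; write $W=\widehat{Z}[t]$, which is indecomposable in $\C^b(\A)$. I would first verify that $W$ is non-projective in the Frobenius category $\C^b(\A)$: by Proposition \ref{K_P} and the identification in the proof of Theorem \ref{stdense1}, $\mathcal{F}_m$ matches the projective-injective objects of $\C^b(\A)$ with those of $\C_{\equiv m}(\A)$, so $\mathcal{F}_m(W)=Z$ non-projective forces $W$ non-projective. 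Hence there is an almost split sequence $0\to A\to B\to W\to0$ in $\C^b(\A)$, and applying the exact functor $\mathcal{F}_m$ together with the preservation result of \cite{BL} yields an almost split sequence $0\to\mathcal{F}_m(A)\to\mathcal{F}_m(B)\to Z\to0$ in $\C_{\equiv m}(\A)$. Because $\C^b(\A)$ also has almost split sequences starting at every non-injective indecomposable and $\mathcal{F}_m$ preserves these, the same mechanism produces the almost split sequence starting at any non-injective indecomposable of $\C_{\equiv m}(\A)$. Since Theorem \ref{dense} guarantees that every indecomposable of $\C_{\equiv m}(\A)$ is of the form $\mathcal{F}_m(\widehat{Z}[t])$, no indecomposable is missed, and therefore $\C_{\equiv m}(\A)$ has almost split sequences.

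The step I expect to be the main obstacle is the bookkeeping of the exact structures under $\mathcal{F}_m$: one must check that the (graded split) Frobenius exact structure on $\C^b(\A)$ is carried by $\mathcal{F}_m$ onto the corresponding Frobenius structure on $\C_{\equiv m}(\A)$, so that the two notions of \emph{almost split sequence} agree, and that the \cite{BL} preservation statement, phrased for additive Krull-Schmidt categories, applies to these exact categories. The identification of projective-injectives via Proposition \ref{K_P} and Theorem \ref{stdense1} is what makes the non-projectivity transfer precise, and the denseness of Theorem \ref{dense} is what upgrades \emph{preservation of a given almost split sequence} to \emph{existence of almost split sequences for every indecomposable}.
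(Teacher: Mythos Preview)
Your proposal is correct and follows essentially the same approach as the paper: both combine Theorem~\ref{Gcov} (the compression functor is a Galois $G$-covering), \cite[Corollary~2]{HZ} (existence of almost split sequences in $\C^b(\A)$), and \cite[Theorem~3.7]{BL} (preservation of almost split sequences under Galois $G$-coverings). You are simply more explicit than the paper about the density step from Theorem~\ref{dense} and the transfer of non-projectivity via Proposition~\ref{K_P}, which the paper's terse proof leaves to the reader.
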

\begin{proof} By Theorem \ref{Gcov}, we know that  ${\mathcal F}_{m}$ is a Galois $G$-covering. Moreover, by \cite[Corollary 2]{HZ} if $\mathrm{mod}\, \mathcal{A}$  has finite global dimension then $\mathbf{C}^{b}( \mathcal{A})$  has almost split sequences.

By  \cite[Theorem 3.7]{BL} we know that a short sequence $\eta$ in $\C^{b}( \A)$ is an almost split sequence  if and only if ${\mathcal F}_{m}(\eta)$ is an almost split sequence in $\mathbf{C}_{\equiv m}(\mathcal{A})$.
 \end{proof}

\begin{rem}\label{nota} \emph{ In \cite[Corollary 5]{HZ},  the authors proved that if $\A$ is   dualizing
  and }$\mathrm{mod}\, \mathcal{A}$  has finite global dimension then $\mathbf{C}^b_{\equiv m}(\mathcal{A})$ has almost split sequences. Here $\mathbf{C}^b_{\equiv m}(\mathcal{A})$ denotes the  smallest full subcategory of $\mathbf{C}_{\equiv m}(\mathcal{A})$ containing all the stalk $m-$periodic complexes closed under finite extensions.

\emph{The mentioned authors proved that when $\mbox{mod}\, \mathcal{A}$ has global dimension less than  or equal to one then both categories $\mathbf{C}_{\equiv m}(\mathcal{A})$ and $\mathbf{C}^b_{\equiv m}(\mathcal{A})$  coincide.}
\end{rem}
\vspace{.05in}

By \cite{BSZ}, we know that there are  almost split sequences in $\mathbf{C}_{[1, n]}(\A),$ whenever $\A$ is  dualizing. The next result shows that these sequences provide the ones in $\mathbf{C}_{\equiv m}(\A).$

\begin{teo} \label{nu}  Let $\A$ be a dualizing $k-$category such that $\mbox{mod}\, \mathcal{A}$ has finite global dimension. Then each almost split sequence in $\mathbf{C}_{\equiv m}( \A)$ is the image under $\mathcal{F}_m$ of the shift of an almost split sequence in $\mathbf{C}_{[1, n]}(\A)$.
\end{teo}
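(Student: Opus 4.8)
The plan is to begin with an arbitrary almost split sequence $\xi:0\to V\to E\to W\to 0$ in $\C_{\equiv m}(\A)$ and to produce an almost split sequence in $\C_{[1,n]}(\A)$ whose shift maps onto $\xi$ under ${\mathcal F}_m$. Since $W$ is indecomposable, Theorem \ref{dense} yields an indecomposable $\widehat{W}\in\C_{[1,n]}(\A)$ and an integer $t\in\{0,\dots,m-1\}$ with $W\cong{\mathcal F}_m(\widehat{W}[t])$; and since $\A$ is dualizing, \cite{BSZ} provides an almost split sequence $\zeta:0\to V_0\to U_0\to\widehat{W}\to 0$ in $\C_{[1,n]}(\A)$. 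It then suffices to prove that ${\mathcal F}_m(\zeta[t])\cong\xi$.

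The core step is to show that $\zeta$, regarded through the full and faithful embedding $\C_{[1,n]}(\A)\hookrightarrow\C^b(\A)$, is an almost split sequence in $\C^b(\A)$. It is a non-split short exact sequence with indecomposable end terms, since $\mbox{End}_{\C^b(\A)}(\widehat{W})=\mbox{End}_{\C_{[1,n]}(\A)}(\widehat{W})$ is local, and likewise for $V_0$. What must be checked is that $U_0\to\widehat{W}$ is right almost split in $\C^b(\A)$: if $M\in\C^b(\A)$ is indecomposable and $g:M\to\widehat{W}$ is not a retraction, write $M\cong M'[s]$ with $M'\in\C_{[1,n]}(\A)$; when $s=0$ the map $g$ factors through $U_0\to\widehat{W}$ because $\zeta$ is almost split in $\C_{[1,n]}(\A)$. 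The main obstacle is the case $s\neq 0$, where $M$ is a genuine shift and its support meets the interval $[1,n]$ only partially. Here I would normalize $\widehat{W}$ inside its shift class so that it can be extended neither to the left nor to the right, in the sense of the definition preceding Theorem \ref{irre en Cn}, and then invoke the equivalence established there between irreducibility in $\C^b(\A)$ and in $\C_{[1,n]}(\A)$ to argue that no irreducible morphism into $\widehat{W}$ can originate from a properly shifted indecomposable. Controlling these shifted contributions, and thereby showing that the middle term of the almost split sequence ending at $\widehat{W}$ is the same whether computed in $\C_{[1,n]}(\A)$ or in $\C^b(\A)$, is the delicate heart of the argument.

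Granting that $\zeta$ is almost split in $\C^b(\A)$, the remainder is formal. By \cite[Theorem 3.7]{BL} the functor ${\mathcal F}_m$ takes almost split sequences of $\C^b(\A)$ to almost split sequences of $\C_{\equiv m}(\A)$, so ${\mathcal F}_m(\zeta)$ is almost split; and because ${\mathcal F}_m$ commutes with the shift functor and $\C_{\equiv m}(\A)$ is closed under shifts, ${\mathcal F}_m(\zeta[t])\cong{\mathcal F}_m(\zeta)[t]$ is an almost split sequence ending at ${\mathcal F}_m(\widehat{W}[t])\cong W$. Finally, the uniqueness of almost split sequences ending at a fixed indecomposable object forces ${\mathcal F}_m(\zeta[t])\cong\xi$, which is precisely the assertion that $\xi$ is the image under ${\mathcal F}_m$ of the shift of an almost split sequence in $\C_{[1,n]}(\A)$.
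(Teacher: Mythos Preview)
Your outline shares the paper's overall shape --- lift an end term to $\C_{[1,n]}(\A)$, locate an almost split sequence, and push it down via ${\mathcal F}_m$ --- but at the core step you run the argument in the opposite direction from the paper, and that reversal leaves a real gap.

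The paper does not take an almost split sequence in $\C_{[1,n]}(\A)$ and try to promote it to $\C^b(\A)$. Instead it lifts the \emph{left} end term to $X\in\C_{[1,n]}(\A)$, invokes the finite global dimension hypothesis so that $\C^b(\A)$ itself has almost split sequences, and uses the Galois $G$-covering (Theorem~\ref{Gcov} with \cite[Theorem 3.7]{BL}) to obtain the almost split sequence $\eta:X\to Y\to Z$ in $\C^b(\A)$ satisfying ${\mathcal F}_m(\eta[t])=\alpha$. Only then does it show $\eta$ lies in $\C_{[1,n]}(\A)$, by a concrete support argument: if an indecomposable summand $Y_t$ of the middle term had $Y_t^{n+1}\neq 0$, then the mapping cone of the irreducible $f_t:X\to Y_t$ would be an indecomposable complex of length exceeding $n$, contradicting the standing hypothesis on $\A$; a short iteration then pins down the interval.

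Your route takes $\zeta$ in $\C_{[1,n]}(\A)$ and attempts to show it is almost split in $\C^b(\A)$. This is the harder implication, and your sketch does not establish it. First, ``normalizing $\widehat{W}$ inside its shift class'' cannot produce the non-extendability conditions: those conditions are intrinsic to the complex and are unaffected by shifting. Second, Theorem~\ref{irre en Cn} requires non-extendability of \emph{both} the domain (to the left) and the codomain (to the right) of a fixed morphism, so it does not control irreducible morphisms into $\widehat{W}$ from arbitrary shifted sources. Third, and most seriously, even a complete description of the irreducible morphisms into $\widehat{W}$ would not by itself prove that every non-retraction $g:M\to\widehat{W}$ with $M$ properly shifted factors through $U_0\to\widehat{W}$; the right-almost-split property is stronger than ``all irreducibles come from inside the interval''. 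In general the almost split sequence ending at a given object in $\C_{[1,n]}(\A)$ and in $\C^b(\A)$ can differ (the interval boundary introduces extra projectives and injectives), so this step genuinely needs the mapping-cone length argument the paper provides --- and that argument is naturally run starting from the almost split sequence already known to exist in $\C^b(\A)$, not from the one in $\C_{[1,n]}(\A)$.
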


\begin{proof}  Let $\alpha:  A' \stackrel{f}{\rightarrow} B' \stackrel{g}{\rightarrow} C'$ be an almost split sequence in $\mathbf{C}_{\equiv m}(\A)$. By
Theorem \ref{dense} there is a complex  $X \in \C_{[1,n]}(\A)$ such that $\mathcal{F}_m(X[t])=A'$ for some integer $t.$

By Proposition \ref{K_P}, since $A'$ is not projective-injective  then $X$ neither so is.  By Theorem \ref{Gcov} we know that the functor $\mathcal{F}_m: \C^{b}(\A) \rightarrow \mathbf{C}_{\equiv m}(\A)$ is a  Galois $G$-covering thus we may consider $\eta: \ X \stackrel{h}{\rightarrow} Y \stackrel{l}{\rightarrow} Z$ an almost split sequence in $\C^{b}(\A)$ starting in $X$ such that ${\mathcal F}_{m}(\eta [t])=\alpha$.  Observe that we can consider the shift  such that $X^1\neq 0.$ Then $X^{n+1}=0$ because $X$ is indecomposable. Hence $X \in \C_{[1,n]}(\A).$

Now, we prove that $\eta $ is in fact an almost split sequence in $\mathbf{C}_{[1, n]}( \A).$

First, we affirm that $Y^{n+1}=0.$ In fact, consider $f:X \rightarrow  Y=\oplus_{i=1}^{r} Y_{i}$. We observe that if $Y^{n+1}\neq 0$ then there is an indecomposable direct summand of $Y,$ namely $Y_t$ such that $Y_t^{n+1}\neq 0$ and $f_t:X\rightarrow Y_t$ is an irreducible morphism between indecomposable complexes. We point out that $Y_t$ is not projective-injective,  otherwise $Y_t=J_n(P)$ and then $f=0$ because $id_Pf^n=0.$
 Then, the mapping cone $C_{f_t}$ is an indecomposable complex. Therefore, we get a contradiction because $C_{f_t}^{1}=X[1]^{1}\oplus Y_t^{1}=X^2\neq 0$ and $C_{f_t}^{n+1}=X[1]^{n+1}\oplus Y_t^{n+1}\neq 0.$ Then we conclude that $Y^{n+1}=0.$

From the fact that  $Y^{n+1}=X^{n+1} \oplus Z^{n+1}=Z^{n+1}$ we get that $Z^{n+1}= 0$.

Therefore we only need to show that $Y$ has length at most $n$ so  we can conclude that $X,Y,Z \in \mathbf{C}_{[1, n]}( \A).$
Assume that $Y^0\neq 0$, then there exists a indecomposable direct summand of $Y$, $Y_r$,  with $Y^0_r\neq 0$ and there also be  an irreducible morphism $f_r:X \to Y_r$.
Again, since an indecomposable complex has at most $n$ non-zero entries and $Y^0_r\neq 0$ we have that $Y_r^n=0$. Now, since $f_r$ is an irreducible morphism and $X^0=0$, then by \cite[Corollary 2, Proposition 3]{GM} the irreducible morphism $f_r$ is a split monomorphism and thus $X^n=0$.

Now, if this is the case we have that $Y^{0} =Z^0 \neq 0$ implies $Z^n=0$ and thus $Y^n=0$ because $Z$ is an indecomposable object.  Hence if $Y^{-1}=0$ we can choose another interval $[1, n]$ by shifting the complexes one entry to the left and we are done. If $Y^{-1}\neq 0$, we repeat the above argument to get that $X^{n-1}, Y^{n-1}$ and $Z^{n-1}$ are zero and we proceed to analyze if $Y^{-2}=0$ or not. We repeat this argument,  until there exists $i<n$ such that $Y^{-i}=0$. Observe that since $X^{1}\neq 0$ this process stops.
Selecting the proper interval we get that $X,Y,Z \in \mathbf{C}_{[1, n]}(\A)$.
\end{proof}

\section{The Auslander-Reiten quiver of $\C_{\equiv m} (\mbox{proj}\, A)$ where the strong global dimension of $A$ is finite.}

Throughout this section,  we consider  $A$  a finite dimensional $k$-algebra over a field $k$ and with   $s.gl.\mbox{dim}\, A = \nu < \infty$.

%We recall the definition of strong global dimension given in \cite{Sk}, for a finite dimensional algebra over a field. For a complex $X \in K^{b}(\mbox{proj}\, A)$,
%$$\dots \rightarrow 0 \rightarrow X^{r}\rightarrow X^{r+1}\rightarrow \dots \rightarrow  X^{s-1}\rightarrow  X^{s} \rightarrow 0\rightarrow \dots$$
%\noindent with $X^{r}\neq  0$ and  $X^{s}\neq  0$, we say that the length of $X$ is $\ell(X) = s-r$. The strong global dimension of an algebra $A$ is defined as follows
%$$s.gl. \mbox{dim}\, A = \mbox{sup}\{\, \ell(P) \mid P \in  K^{b}(\mbox{proj}\, A) \text{\, is indecomposable}\}.$$

By \cite[Proposition 2.5]{AR}, we know that $\rm{proj}\,A$ is dualizing.

The aim of this section is to show how to build the  Auslander-Reiten quiver of $\C_{\equiv m} (\rm{proj}\,A)$. By the result given in \cite[Theorem 4.1]{CGP3}, we  shall give two ways of constructing this quiver. First applying the compression functor $\mathcal{F}_m$  and secondly applying the same knitting technique used to obtain the Auslander-Reiten quiver of $\mathbf{C}_{[1,n]}(\rm{proj}\,A)$.
\vspace{.05in}

By Theorem \ref{dense1} we know that if  $A$ is  a finite dimensional $k$-algebra such that $s. gl. \mbox{dim}\, A <\infty$ then ${\mathcal F}_{m}: \C^{b}(\mbox{proj} \,A)\rightarrow \mathbf{C}_{\equiv m} (\mbox{proj} \,A)$ with $m \geq  2$ is a Galois $G$-covering where $G$  is the cyclic group generated by $[m]$.
\vspace{.05in}

By \cite[Proposition 3.3]{BL}, if $u : X\rightarrow Y$ is a morphism in $\C^{b}(\mbox{proj} \,A)$
with $X$ or $Y$ indecomposable, then $u$ is irreducible in $\C^{b}(\mbox{proj} \,A)$ if and only if ${\mathcal F}_{m}(u)$ is irreducible
in $\mathbf{C}_{\equiv m}(\mbox{proj} \,A)$.
%\vspace{.05in}

Furthermore, by \cite[Proposition 3.4]{BL} if $u : X\rightarrow Y$ is a morphism in $\C^{b}(\mbox{proj} \,A)$,
then $u$ is a minimal left  almost split morphism  or a minimal right  almost split morphism if and only if ${\mathcal F}_{m}(u)$ is a minimal left  almost split morphism
or a minimal right  almost split morphism  in $\mathbf{C}_{\equiv m}(\mbox{proj} \,A)$, respectively.}
\vspace{.1in}

As a consequence of  \cite[Theorem 3.7]{BL}, we get the following result  for $\A=\mbox{proj} \,A$ and  $m \geq  2$.

\begin{teo} \label{ass-conv}  Let $A$ be a finite dimensional $k$-algebra such that $s. gl. \emph{dim}\, A = \nu$.
The sequence $\eta$ in $\C^{b}(\emph{proj} \,A)$ is almost split  if and only if ${\mathcal F}_{m}(\eta)$ is an almost split sequence in $\mathbf{C}_{\equiv m}({\emph{proj}}\,A)$  for  any $m\geq 2$.

Moreover, the additive function can be applied to build the Auslander-Reiten quiver of $\C_{\equiv m} (\rm{proj}\,A)$.
\end{teo}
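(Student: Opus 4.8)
The plan is to deduce both assertions from the fact, already established in Theorem \ref{dense1}(1), that for a finite dimensional $k$-algebra $A$ with $s.gl.\mbox{dim}\, A = \nu < \infty$ the compression functor ${\mathcal F}_m : \C^b(\mbox{proj}\,A) \rightarrow \C_{\equiv m}(\mbox{proj}\,A)$ is a Galois $G$-covering with $G = \langle [m]\rangle$, for every $m \geq 2$. Both $\C^b(\mbox{proj}\,A)$ and $\C_{\equiv m}(\mbox{proj}\,A)$ are Krull-Schmidt $k$-categories, so the transfer theorem for almost split sequences under Galois coverings, namely \cite[Theorem 3.7]{BL}, applies directly. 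This immediately yields the equivalence: a short sequence $\eta$ in $\C^b(\mbox{proj}\,A)$ is almost split if and only if ${\mathcal F}_m(\eta)$ is almost split in $\C_{\equiv m}(\mbox{proj}\,A)$. Thus the first statement requires no computation beyond citing Theorem \ref{dense1} and \cite[Theorem 3.7]{BL}, exactly as was done in the dualizing case earlier in the excerpt.

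For the ``moreover'' part I would explain how this transfer, together with the behaviour of irreducible morphisms recorded just before the statement, allows one to knit the Auslander-Reiten quiver. By \cite[Proposition 3.3]{BL} the functor ${\mathcal F}_m$ preserves and reflects irreducible morphisms whenever the source or the target is indecomposable, and by \cite[Proposition 3.4]{BL} it preserves and reflects minimal left and minimal right almost split morphisms. Combined with the first part and the bijection of Theorem \ref{dense1}(2) between $\mbox{Ind}(\C_{\equiv m}(\mbox{proj}\,A))$ and $\bigcup_{k=0}^{m-1}\mbox{Ind}(\C_{[1,\nu+1]}(\mbox{proj}\,A)[k])$, this shows that ${\mathcal F}_m$ carries each mesh of the Auslander-Reiten quiver of $\C^b(\mbox{proj}\,A)$ onto a mesh of the Auslander-Reiten quiver of $\C_{\equiv m}(\mbox{proj}\,A)$. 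Hence one obtains the latter quiver as the image under ${\mathcal F}_m$ of the former, which by the group action amounts to taking the quotient by $G$.

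Concretely this gives the two announced procedures. On the one hand, by Theorem \ref{nu} every almost split sequence of $\C_{\equiv m}(\mbox{proj}\,A)$ is the image under ${\mathcal F}_m$ of a shift of an almost split sequence of $\C_{[1,\nu+1]}(\mbox{proj}\,A)$, so one may compute the almost split sequences in the category of complexes of fixed size and push them forward. On the other hand, since ${\mathcal F}_m$ respects irreducible morphisms and meshes, one may knit directly in $\C_{\equiv m}(\mbox{proj}\,A)$ using the same additive function and mesh relations as for $\C_{[1,\nu+1]}(\mbox{proj}\,A)$; the two descriptions coincide by \cite[Theorem 4.1]{CGP3}. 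The only point needing attention is the bookkeeping of shifts and of the $G$-action when identifying the fibres of ${\mathcal F}_m$, but no genuine obstacle arises: the substantive work is already contained in Theorem \ref{dense1}, and the present statement is essentially its specialization to the Auslander-Reiten-theoretic setting.
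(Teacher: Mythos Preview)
Your argument for the first assertion is correct and identical to the paper's: both simply invoke Theorem \ref{dense1}(1) to get that ${\mathcal F}_m$ is a Galois $G$-covering and then apply \cite[Theorem 3.7]{BL}.

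For the ``moreover'' clause, however, you interpret and prove something different from what the paper does. The paper reads ``the additive function can be applied'' literally: it checks that the almost split sequences in $\C_{\equiv m}(\mbox{proj}\,A)$ are degreewise split, i.e.\ that if $\eta: X\to Y\to Z$ is almost split in $\C^b(\mbox{proj}\,A)$ then ${\mathcal F}_m(Y)^i = {\mathcal F}_m(X)^i \oplus {\mathcal F}_m(Z)^i$ for every $i$. This is a short direct computation using that $Y^i = X^i\oplus Z^i$ (from the graded-split exact structure on $\C^b(\mbox{proj}\,A)$) together with the definition ${\mathcal F}_m(-)^i = \oplus_{t\in\mathbb Z}(-)^{i+tm}$. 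That is precisely what guarantees the knitting procedure works \emph{intrinsically} in $\C_{\equiv m}(\mbox{proj}\,A)$: the assignment $X\mapsto (X^1,\dots,X^m)$ is additive on meshes.

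Your route instead transports the AR-quiver through the covering: you use \cite[Propositions 3.3, 3.4]{BL}, Theorem \ref{dense1}(2) and Theorem \ref{nu} to argue that every mesh downstairs is the image of a mesh upstairs, and hence the quiver is obtained by applying ${\mathcal F}_m$ (equivalently, quotienting by $G$). This is a perfectly valid way to \emph{construct} the quiver, and indeed it is the first of the two procedures the paper spells out right after the theorem. But it does not, by itself, verify the degreewise additivity that the theorem asserts; you would still need the one-line check the paper performs to justify that one may knit directly in $\C_{\equiv m}(\mbox{proj}\,A)$ via the dimension-vector function rather than by pushing forward from $\C^b(\mbox{proj}\,A)$.
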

\begin{proof} By  \cite[Theorem 3.7]{BL} we have that $\eta$ in $\C^{b}(\mbox{proj} \,A)$ is almost split sequence  if and only if ${\mathcal F}_{m}(\eta)$ is an almost split sequence in $\mathbf{C}_{\equiv m}({\mbox{proj}}\,A)$  for  any $m\geq 2$.

On the other hand, $\mathcal{F}_m: \mathbf{C}^{b}(\mbox{proj} \,A) \rightarrow \mathbf{C}_{\equiv m}({\mbox{proj}}\,A)$   is a Galois $G$-covering, where $G$ is the cyclic group generated by $[m]$.

In the category of complexes   $\mathbf{C}^{b}({\rm proj} \,A)$ the  exact sequences are the ones of the form $\eta: X \stackrel{f}{\rightarrow} Y \stackrel{g}{\rightarrow} Z$ such that for each $i \in \mathbb{Z}$ the sequence $X^i \stackrel{f^i}{\rightarrow} Y^i \stackrel{g^i}{\rightarrow} Z^i$ is exact and splits. Then $Y^{i}= X^{i} \oplus Z^{i}$ for each $i \in \mathbb{Z}$.

Consider the almost split sequence $\mathcal{F}_m(\eta): \mathcal{F}_m( X) \rightarrow \mathcal{F}_m( Y) \rightarrow \mathcal{F}_m(Z)$ in $\mathbf{C}_{\equiv m}({\mbox{proj}}\,A)$. We have to prove that $\mathcal{F}_m(Y)^i = \mathcal{F}_m(X)^i \oplus \mathcal{F}_m(Z)^i$ for each $i$.

Hence

$$ \begin{array}{lcl}
  \mathcal{F}_m(X)^i \oplus \mathcal{F}_m(Z)^i & =&(\oplus_{t \in \mathbb{Z}}  X ^{i+tm}) \oplus (\oplus_{t \in \mathbb{Z}}  Z ^{i+tm})\\
   &=&\oplus_{t \in \mathbb{Z}}  (X ^{i+tm} \oplus Z ^{i+tm})\\
   &=& \oplus_{t \in \mathbb{Z}}  Y ^{i+tm} \\
   &=&  \mathcal{F}_m(Y)^i
\end{array}
$$

\noindent proving the result.
\end{proof}

Consider $\A= \mbox{proj}\, A$ with $A$ a finite dimensional $k$-algebra with   $s.gl.\mbox{dim}\, A = \nu$. Then $\A$ is a  Hom-finite Krull-Schmidt $k-$category verifying that each indecomposable complex in $\C^{b}(\mbox{proj}\, A)$  is a shift of an indecomposable  complex in $\C_{[1,\nu +1]}(\mbox{proj}\, A).$
\vspace{.05in}

By Theorem \ref{nu} we get the following result, which is fundamental to build the Auslander-Reiten quiver of $\C_{\equiv m} (\rm{proj}\,A)$ for $m \geq 2$.

\begin{teo} \label{nu2} Let $A$ be a finite dimensional $k$-algebra  such that $s. gl. \emph{dim}\, A = \nu$. Let  $m \geq 2$. Consider $\mathcal{F}_m: \C^{b}(\emph{proj} \, A) \rightarrow \mathbf{C}_{\equiv m}(\emph{proj} \, A)$ the compression el functor.  Then each almost split sequence in $\mathbf{C}_{\equiv m}(\emph{proj} \, A)$ is the image under $\mathcal{F}_m$ of the shift of an almost split sequence in $\mathbf{C}_{[1, \nu+1]}(\emph{proj} \, A)$.
\end{teo}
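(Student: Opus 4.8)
The plan is to obtain this statement as the specialization of Theorem \ref{nu} to the additive category $\A=\mbox{proj}\, A$. Since the preceding discussion has already set up $\mbox{proj}\, A$ as the running category of this section, the substance of the argument is simply to verify that $\mbox{proj}\, A$ satisfies the two hypotheses of Theorem \ref{nu} together with the standing assumption of the section, with interval parameter $n=\nu+1$, and then to invoke that theorem.

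First I would record that $\mbox{proj}\, A$ is a dualizing $k$-category; this is \cite[Proposition 2.5]{AR}, already recalled at the start of the section. Next I would check that $\mbox{mod}\,(\mbox{proj}\, A)$ has finite global dimension. Using the identification $\mbox{mod}\,(\mbox{proj}\, A)\simeq \mbox{mod}\, A$ (a finitely presented functor on $\mbox{proj}\, A$ is the cokernel of a map of representables $\mbox{Hom}(-,P_1)\rightarrow \mbox{Hom}(-,P_0)$, which corresponds to a finitely presented $A$-module), this reduces to $\mbox{gl.dim}\, A<\infty$. The latter follows from $s.gl.\mbox{dim}\, A=\nu<\infty$: if an indecomposable module $M$ had projective dimension $d$, its minimal projective resolution would be an indecomposable object of $K^b(\mbox{proj}\, A)$ of size $d+1$, so an infinite global dimension would produce indecomposables of $K^b(\mbox{proj}\, A)$ of unbounded size, contradicting the finiteness of $s.gl.\mbox{dim}\, A$. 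Hence $\mbox{gl.dim}\, A\le \nu<\infty$.

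Then I would justify the standing hypothesis of the section for $\A=\mbox{proj}\, A$, namely that every indecomposable complex in $\C^b(\mbox{proj}\, A)$ is, up to shift, an object of $\C_{[1,\nu+1]}(\mbox{proj}\, A)$. Every bounded complex of projectives splits as a minimal complex (all differentials with entries in the radical) plus a contractible one, so an indecomposable is either minimal or of the form $P\stackrel{\,id\,}{\rightarrow}P$. The minimal indecomposables are precisely the minimal complexes representing the indecomposable objects of $K^b(\mbox{proj}\, A)$, whose sizes are bounded by $s.gl.\mbox{dim}\, A=\nu$, while the contractible indecomposables have size $2\le \nu+1$. Thus $\nu+1$ is a uniform bound for the size of an indecomposable in $\C^b(\mbox{proj}\, A)$, which is exactly the required property with $n=\nu+1$.

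With these verifications in place, the conclusion is immediate from Theorem \ref{nu} applied to $\A=\mbox{proj}\, A$ and $n=\nu+1$: each almost split sequence in $\C_{\equiv m}(\mbox{proj}\, A)$ is the image under $\mathcal{F}_m$ of the shift of an almost split sequence in $\C_{[1,\nu+1]}(\mbox{proj}\, A)$. I expect the only delicate point to be the size bound of the previous paragraph, rather than the final application: one must keep track of the contractible projective-injective summands, which are annihilated in $K^b(\mbox{proj}\, A)$ where the strong global dimension is measured but survive in $\C^b(\mbox{proj}\, A)$, and confirm that the ``$+1$'' in $\nu+1$ genuinely absorbs them so that no indecomposable escapes the interval.
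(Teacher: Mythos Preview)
Your proposal is correct and follows exactly the paper's approach: the paper derives Theorem \ref{nu2} simply by stating that $\mbox{proj}\, A$ is a Hom-finite Krull--Schmidt dualizing category in which every indecomposable bounded complex is, up to shift, in $\C_{[1,\nu+1]}(\mbox{proj}\, A)$, and then invoking Theorem \ref{nu}. Your write-up is in fact more detailed than the paper's---you spell out why $\mbox{mod}(\mbox{proj}\, A)\simeq \mbox{mod}\, A$ has finite global dimension and why the contractible indecomposables $P\stackrel{id}{\to}P$ are absorbed by the ``$+1$'' in $\nu+1$---whereas the paper leaves these verifications implicit in the sentence preceding the theorem.
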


Now, we show how to build the Auslander-Reiten quiver of $\mathbf{C}_{\equiv m} (\mbox{proj} \,A)$, provided we know how to construct the Auslander-Reiten quiver of $\mathbf{C}_{[1,\nu +1]}({\rm proj} \,A)$. For more details of how to construct the Auslander-Reiten quiver of $\mathbf{C}_{[1,n]}({\rm proj} \,A)$ we refer the reader to \cite[Section 5]{CPS1}. We observe that if $\mathbf{C}_{[1,n]}({\rm proj} \,A)$ is of finite type then the technique given in \cite[Section 5]{CPS1} allow us to construct the whole quiver.

We have two ways of constructing the Auslander-Reiten quiver of $\mathbf{C}_{\equiv m} (\mbox{proj} \,A)$.

By  \cite[Theorem 4.1]{CGP2} since $A$ is a finite dimensional  piecewise hereditary algebra then we know that all the almost split sequences in $\C^{b}(\mbox{proj} \,A)$ are the ones included in the subquiver  $\Gamma(\Sigma)$ between the  section $\Sigma$ and $\Sigma[1]$  and containing both sections. By $\Sigma[1]$ we mean a shift of $\Sigma$.  More precisely, we can  find that situation in the quivers of the categories $\mathbf{C}_{[1,n]}({\rm proj} \,A)$. In particular, in a category $\mathbf{C}_{[1, \nu+1]}({\rm proj} \,A)$, where $s.gl.\mbox{dim}\, A  =\nu$.
\vspace{.1in}

For the first way of constructing the Auslander-Reiten quiver of $\mathbf{C}_{\equiv m} (\mbox{proj} \,A)$ we can follow the next steps.
\begin{itemize}
  \item[(1)] We construct the   Auslander-Reiten quiver of  $\mathbf{C}_{[1,\nu +1]}({\rm proj} \,A)$.

  \item[(2)] We consider a section $\Sigma$ and the subquiver $\Gamma(\Sigma)$ in $\mathbf{C}_{[1, \nu+1]}({\rm proj} \,A)$.

 \item[(3)] Since  $\mathcal{F}_m(X)=\mathcal{F}_m(X[m])$ then it is enough to consider the almost split sequences in $\Gamma(\Sigma)$ in $\mathbf{C}_{[1, \nu+1]}({\rm proj} \,A)$ and the shifts of this almost split sequences in $\Gamma(\Sigma)[i]$ for $i=1, \dots, m-1$ in $\C^{b}(\mbox{proj} \,A)$.

 \item[(4)]  Applying the functor  $\mathcal{F}_m$ to the almost split sequences stated in step (3) we obtain the Auslander-Reiten quiver of $\mathbf{C}_{\equiv m} (\mbox{proj} \,A)$.
\end{itemize}
\vspace{.1in}

A second way of constructing the Auslander-Reiten quiver of $\mathbf{C}_{\equiv m} (\mbox{proj} \,A)$ is as follows:

We may apply the knitting technique because of Theorem \ref{ass-conv}.  This technique has been used to build the Auslander-Reiten quiver of a module category and also for the categories of fixed size $\mathbf{C}_{[1,n]}({\rm proj} \,A)$, see \cite [Section 5]{CGP2}.

First, we start this process with the same steps  (1) and (2) stated above and we continue with steps (3)-(5) as we explain below.
\begin{itemize}
  \item[(3)] We may apply the functor $\mathcal{F}_m$  to the almost split sequences in $\Gamma(\Sigma)$.

 \item[(4)] We compute the radical of the projective-injective complexes in $\mathbf{C}_{\equiv m} (\mbox{proj} \,A)$. We know that if $X$ is the starting of an almost split sequence in $\Gamma(\Sigma)$ with a direct summand projective-injective $Y$  in its middle term then  $\mathcal{F}_m(X)$ is the starting of the almost split sequence in $\mathbf{C}_{\equiv m} (\mbox{proj} \,A)$ that has a projective-injective complex,  $\mathcal{F}_m(Y)$, in its middle term. We call $\mathcal{F}_m(X)$ the radical of $\mathcal{F}_m(Y)$ and we denote it by ${\rm rad}\,\mathcal{F}_m(Y)$. Moreover,  $\mathcal{F}_m(X[i])$ is the starting of the almost split sequence in $\mathbf{C}_{\equiv m} (\mbox{proj} \,A)$ that has the projective-injective complex   $\mathcal{F}_m(Y[i])$ in its middle term, for $i=1, \dots, m-1$. In other words, ${\rm rad}\,\mathcal{F}_m(Y[i])=\mathcal{F}_m(X[i])$.

\item[(5)] We continue building the quiver  applying the knitting technique.
\end{itemize}
\vspace{.1in}

Bellow we present an example to show how we can apply both methods of constructing the Auslander-Reiten quiver.

\begin{ej}\label{ej1}
\emph{Consider the path algebra given by the quiver $Q_A$}

\begin{displaymath}
\xymatrix {1 \ar[r]^{\alpha}  & 2 \ar[r]^{\beta} & 3 }
\end{displaymath}

\noindent \emph{with the relation $\beta \alpha  =0$. We observe that $s. gl. \mbox{dim}\, A  =2$.}

\emph{The Auslander-Reiten quiver of  $\mathbf{C}_{[1,3]}({\rm proj} \,A)$ is as follows:}
\vspace{.1in}

{\tiny
\begin{displaymath}
\def\objectstyle{\scriptstyle}
\def\labelstyle{\scriptstyle}
   \xymatrix @!0 @R=1.1cm  @C=1cm{ & & & & 3,3,0 \ar[rd]& &0,1,1\ar[rd] & && &   \\
    &{\bf  0 ,0,2}\ar@{.}[rr] \ar[rd] &  & { 0,3 ,0} \ar[rd]\ar[ru]\ar@{.}[rr]
   & & 3,2,1 \ar[rd] \ar[ru]\ar@{.}[rr]& & {\bf 0,1,0 }\ar[rd]\ar@{.}[rr]& & 2,0,0 \ar[rd] &  \\
  0 , 0,3 \ar[rd] \ar[ru] \ar@{.}[rr]& & \mathbf{0 ,3,2}\ar[rd] \ar[ru] \ar[rdd]\ar@{.}[rr] &&0,2,1 \ar[rd] \ar[ru]\ar@{.}[rr]& & \mathbf{3,2,0 }\ar[rd] \ar[ru] \ar@{.}[rr] \ar[rdd]
 &  &2,1,0 \ar[rdd]\ar[ru] \ar@{.}[rr] & & 1,0,0 \\
  &0,3,3 \ar[ru] & & \mathbf{0,0,1} \ar[ru] \ar@{.}[rr] & & \mathbf{0,2,0} \ar[ru]\ar@{.}[rr] & &3,0,0\ar[ru] & &  & \\
  & &    &  0,2,2 \ar[ruu] & & & &2,2,0  \ar[ruu]   & & 1,1,0 \ar[ruu]& }
\end{displaymath}}

\noindent \emph{where we denote the arrows in the complexes  with comas  and the projective $P_i$  by $i$.}

\emph{By \cite[Theorem 4.1]{CGP2} we know that all the almost split sequences in $\C^{b}(\mbox{proj} \,A)$ are the ones between a section $\Sigma$ and $\Sigma[1]$.   Furthermore, we may consider that  the complexes  $ 0\rightarrow 0\rightarrow P_2$, $0 \rightarrow P_3\rightarrow P_2$ and  $ 0\rightarrow 0\rightarrow P_1$ determine the section $\Sigma$.}

\emph{Applying $\mathcal{F}_4$ to the above almost split sequences in $\C^{b}(\mbox{proj} \,A)$ we obtain a subquiver of the Auslander-Reiten quiver of
$\mathbf{C}_{\equiv 4} (\mbox{proj} \,A)$ a follows:}

\tiny
\begin{displaymath}
\def\objectstyle{\scriptstyle}
\def\labelstyle{\scriptstyle}
   \xymatrix @!0 @R=1.1cm  @C=1.1cm{  & & &(3,3,0 ,0,3)\ar[rd] & &(0,1,1,0,0)\ar[rd] & \\
   (0,0,2,0,0)\ar@{.}[rr] \ar[rd]  & &( 0,3 ,0,0,0)\ar@{.}[rr] \ar[rd] \ar[ru]& &(3,2,1,0,3)\ar@{.}[rr] \ar[rd] \ar[ru] & &(0,1,0,0,0)  \\
    &(0,3,2,0,0)\ar@{.}[rr]\ar[rd] \ar[rdd] \ar[ru] & &(0 ,2,1,0,0)\ar@{.}[rr] \ar[rd] \ar[ru] & &(3,2,0,0,3)\ar[ru] & \\
 & &(0,0,1,0,0)\ar@{.}[rr]  \ar[ru] & & (0,2,0,0,0)\ar[ru]& &\\
&&(0,2,2,0,0)\ar[ruu]&&&& }
\end{displaymath}
\normalsize

\noindent \emph{where we denote each $4-$periodic complex in brackets, comas instead of arrows and $i$ instead of the projective $P_i$.}

\emph{Consider the almost split sequences $\Gamma(\Sigma)[i]$ for $i=0, \dots, 3$ in $\C^{b}(\mbox{proj} \,A)$.  Applying the functor  $\mathcal{F}_4$ to this sequences we obtain the Auslander-Reiten quiver of $\mathbf{C}_{\equiv 4} (\mbox{proj} \,A)$.}

\emph{ Another way to construct the Auslander-Reiten quiver of $\mathbf{C}_{\equiv 4} (\mbox{proj} \,A)$ is to apply the knitting technique. We start building the Auslander-Reiten quiver of $\mathbf{C}_{\equiv 4} (\mbox{proj} \,A)$. For this process, we need to know the radical of the projective-injective complexes in $\mathbf{C}_{\equiv 4} (\mbox{proj} \,A)$, which are the shifts of the radical of the projective-injective complexes stated in the above quiver.}

\emph{ Now, we apply the  knitting technique to build the Auslander-Reiten quiver. We obtain all the quiver by identifying the complexes that appear in the rectangles.}

\tiny
\begin{displaymath}
\def\objectstyle{\scriptstyle}
\def\labelstyle{\scriptstyle}
   \xymatrix @!0 @R=1.1cm  @C=1.4cm{  & & &(3,3,0 ,0,3)\ar[rd] & &(0,1,1,0,0)\ar[rd] & & & &   \\
   *+[F]{(0,0,2,0,0)}\ar@{.}[rr] \ar[rd]  & & 0,3 ,0,0,0)\ar@{.}[rr] \ar[rd] \ar[ru]& &(3,2,1,0,3)\ar@{.}[rr] \ar[rd] \ar[ru] & &(0,1,0,0,0) \ar@{.}[rr]\ar[rd] & &(2,0,0,0,2)\ar@{.}[r]\ar[rd] &    \\
    &*+[F]{(0,3,2,0,0)}\ar@{.}[rr]\ar[rd] \ar[rdd] \ar[ru] & &(0 ,2,1,0,0)\ar@{.}[rr] \ar[rd] \ar[ru] & &(3,2,0,0,3)\ar@{.}[rr]\ar[rd] \ar[rdd] \ar[ru] & & (2,1,0,0,2)\ar@{.}[rr] \ar[rd] \ar[ru] & &*+[F]{(2,0,0,3,2)} \\
 & &*+[F]{(0,0,1,0,0)}\ar@{.}[rr]  \ar[ru] & & (0,2,0,0,0)\ar[ru]& & (3,0,0,0,3)\ar@{.}[rr]\ar[rd]\ar[ru] & & (2,1,0,3,2)\ar@{.}[r]\ar[rd]\ar[ru]& \\
&&*+[F]{(0,2,2,0,0)}\ar[ruu]&    &    && (2,2,0,0,2)\ar[ruu]&(3,0,0,3,3)\ar[ru]&&*+[F]{(1,1,0,0,1)}}
\end{displaymath}
\normalsize
\vspace{.1in}

\tiny
\begin{displaymath}
\def\objectstyle{\scriptstyle}
\def\labelstyle{\scriptstyle}
   \xymatrix @!0 @R=1.1cm  @C=1.4cm{   & &(0,0,3 ,3,0)\ar[rd] & &(1,0,0,1,1)\ar[rd] & & & & &  \\
    &( 0,0 ,0,3,0)\ar@{.}[rr] \ar[rd] \ar[ru]\ar@{.}[l]& &(1,0,3,2,1)\ar@{.}[rr] \ar[rd] \ar[ru] & &(0,0,0,1,0) \ar@{.}[rr]\ar[rd] & &*+[F]{(0,0,2,0,0)}\ar@{.}[r]\ar[rd] &   & \\
    *+[F]{(2,0,0,3,2)}\ar@{.}[rr]\ar[rd] \ar[rdd] \ar[ru] & &(1 ,0,0,2,1)\ar@{.}[rr] \ar[rd] \ar[ru] & &(0,0,3,2,0)\ar@{.}[rr]\ar[rd] \ar[rdd] \ar[ru] & & (0,0,2,1,0) \ar@{.}[rr] \ar[rd] \ar[ru] & &*+[F]{(0,3,2,0,0)}\ar@{.}[r]\ar[rd] \ar[rdd] & \\
 &(1,0,0,0,1)\ar@{.}[l]\ar@{.}[rr]  \ar[ru] & & (0,0,0,2,0)\ar[ru]& & (0,0,3,0,0)\ar@{.}[rr]\ar[rd]\ar[ru] & & (0,3,2,1,0)\ar@{.}[rr]\ar[rd]\ar[ru]& &
 *+[F]{ (0,0,1,0,0)}\\
*+[F]{(1,1,0,0,1)} \ar[ru]&(2,0,0,2,2)\ar[ruu]&    &    && (0,0,2,2,0)\ar[ruu]&(0,3,3,0,0)\ar[ru]&&( 0,0,1,1,0)\ar[ru]& *+[F]{(0,2,2,0,0)} }
\end{displaymath}
\normalsize
\end{ej}
\vspace{.15in}

In the next example we show how to  construct the Auslander-Reiten quiver of $\mathbf{C}_{\equiv 2} (\mbox{proj} \,A)$ when the complexes belong to $\mathbf{C}_{[1,4]} (\mbox{proj} \,A)$.

\begin{ej}\label{ej2}

\emph{Consider the path algebra given by the quiver $Q_A$}

\begin{displaymath}
\xymatrix {1 \ar[r]^{\alpha}  & 2 \ar[r]^{\beta} & 3 \ar[r]^{\gamma}&4 }
\end{displaymath}

\noindent \emph{with the relations $\beta \alpha  =  \gamma \beta = 0$. We observe that $s. gl. \mbox{dim}\, A  =3$.}

\emph{We construct the Auslander-Reiten quiver of  $\mathbf{C}_{[1,4]}({\rm proj} \,A)$ until we find a section and the shift of the section as follows:}
\vspace{.1in}

{\tiny
\begin{displaymath}
\def\objectstyle{\scriptstyle}
\def\labelstyle{\scriptstyle}
   \xymatrix @!0 @R=1.1cm  @C=1cm{ & & & & & &{\bf 0,0,0,1}\ar[rdd]\ar@{.}[rr] & & {\bf 0,0,2,0}\ar[rdd]\ar@{.}[rr] & &0,3,0,0 \ar[rdd]\ar@{.}[rr]& &   \\
    & &  &  & {  0 ,4,4,0} \ar[rd]&&0,0,2,2 \ar[rd] & & &  &  & & \\
&0,0,0,3 \ar[rd]  \ar@{.}[rr] &  &0,0,4,0\ar[rd] \ar[ru]\ar@{.}[rr] && {\bf 0,4,3,2} \ar[rd]\ar[ru]\ar[ruu]\ar@{.}[r] & &
0,0,2,1 \ar[rd]\ar[ruu]\ar@{.}[rr] &  & {\bf 0,3,2,0}\ar[rd]\ar[ruu]\ar@{.}[rr] &  & 4,3,0,0   &\\
0,0,0,4 \ar[rd] \ar[ru] \ar@{.}[rr] & & 0,0,4,3\ar[rd] \ar[ru]\ar[rdd] \ar@{.}[rr] &  &{\bf 0,0,3,2}\ar[rd] \ar[ru] \ar@{.}[rr] &  & 0,4,3,0\ar[rd] \ar[rdd]\ar[ru] \ar@{.}[rr] & &0,3,2,1 \ar[rd] \ar[ru] \ar@{.}[rr] &  &{\bf 4,3,2,0} \ar[rd] \ar[rdd]\ar[ru]\ar@{.}[rr] & &\\
&0,0,4,4 \ar[ru]& &{\bf 0,0,0,2}\ar[ru]\ar@{.}[rr] & & 0,0,3,0\ar[ru]\ar@{.}[rr]&  &0,4,0,0\ar[rd] \ar[ru]\ar@{.}[rr]& & 4,3,2,1\ar[rd] \ar[ru]\ar@{.}[rr] & &{\bf 0,0,1,0} &\\
  & &    & 0, 0,3,3 \ar[ruu] & & & &0,3,3,0  \ar[ruu]   & 4,4,0,0 \ar[ru] & &0,0,1,1 \ar[ru] & 0,2,2,0&}
\end{displaymath}}

 \emph{The complexes  $0\rightarrow 0\rightarrow 0\rightarrow P_2$, $0 \rightarrow 0 \rightarrow P_3\rightarrow P_2$,
$ 0\rightarrow P_4\rightarrow P_3 \rightarrow P_2$ and $0\rightarrow 0\rightarrow 0\rightarrow P_1$  determine the section $\Sigma$ that we need to construct the quiver of $\mathbf{C}_{\equiv 2}(\mbox{proj} \,A)$.}

\emph{ Applying the functor  $\mathcal{F}_2$ to the almost split sequences in $\C^{b}(\mbox{proj} \,A)$ that we obtained in the Auslander-Reiten quiver of $\mathbf{C}_{[1,4]} (\mbox{proj} \,A)$ between  $\Sigma$ and $\Sigma[1]$ and furthermore applying the knitting technique we obtain the Auslander-Reiten quiver of $\mathbf{C}_{\equiv 2}(\mbox{proj} \,A)$, which is the following:}

\tiny
\begin{displaymath}
\def\objectstyle{\scriptstyle}
\def\labelstyle{\scriptstyle}
   \xymatrix @!0 @R=1.1cm  @C=1.1cm{ & & &(2,2,2)\ar[rdd] & & & &&    &(3,3,3)\ar[rdd] & (4,4,4)\ar[rd]&& (1,1,1)\ar[rd]&\\
    & & &*+[F]{ (0,1,0)}\ar@{.}[rr] \ar[rd] & & (2,0,2)\ar@{.}[rr] \ar[rd] && (0,3,0)\ar@{.}[rr] \ar[rd] &&(4,0,4) \ar[rd]\ar[ru] &&(3\oplus1,4\oplus2,3\oplus1)\ar[rd] \ar[ru]& &*+[F]{(0,1,0)} \\
    & &*+[F]{(3,4\oplus2,3)}\ar@{.}[rr]\ar[rd] \ar[ruu] \ar[ru] & &(2,1,2)\ar@{.}[rr] \ar[rd] \ar[ru] & &(2,3,2)\ar@{.}[rr] \ar[rd] \ar[ru]&& (4,3,4) \ar@{.}[rr] \ar[rd] \ar[ru] \ar[ruu]&&(3\oplus1,2,3\oplus1)\ar[rd] \ar[ru]& &*+[F]{(3,4\oplus2,3)}\ar[ru] & & &\\
  &*+[F]{(3,2,3)}\ar@{.}[rr]  \ar[ru]\ar[rd] & & (3,4,3)\ar[ru]\ar[rd]\ar[rdd]\ar@{.}[rr]& &(2,3\oplus1,2)\ar@{.}[rr] \ar[rd] \ar[ru]&&(4\oplus2,3,4\oplus2)
 \ar[ru]\ar[rd]\ar[rdd]\ar@{.}[rr]&&(1,2,1)\ar@{.}[rr]  \ar[ru]\ar[rd] & &*+[F]{(3,2,3}\ar[ru]& &\\
*+[F]{(0,2,0)}\ar[ru]\ar@{.}[rr]&& (3,0,3)\ar[ru]\ar@{.}[rr] && (0,4,0)\ar[rd]\ar[ru]\ar@{.}[rr]  &&(4\oplus2,3\oplus1,4\oplus2)\ar[ru]\ar[rd]\ar@{.}[rr]&& (1,0,1) \ar[ru]\ar@{.}[rr]&&*+[F]{(0,2,0)}\ar[ru]& & &\\
& & & & (3,3,3)\ar[ruu] & (4,4,4)\ar[ru] & & (1,1,1)\ar[ru]& (2,2,2)\ar[ruu]& & & & &}
\end{displaymath}
\normalsize
\vspace{.1in}

\emph{We denote by  $(i,i,i)$ two different $2-$periodic complexes. One is the complex  $P_i \stackrel{0}{\rightarrow} P_i \stackrel{id}{\rightarrow} P_i$ and the other is the complex  $P_i \stackrel{id}{\rightarrow} P_i \stackrel{0}{\rightarrow} P_i$ for  $i=1, 2, 3, 4$. Precisely, the first $2$-periodic complex  that appear in the quiver contains the projective $P_2$ and is the complex $P_2 \stackrel{id}{\rightarrow} P_2 \stackrel{0}{\rightarrow} P_2$.}
\end{ej}

\section{On sectional paths in $\mathbf{C}_{\equiv m}(\mbox{proj} \, A)$}

Throughout this section,  we consider  $A$  a finite dimensional $k$-algebra over a field $k$.

The composition of irreducible morphisms on a sectional path in  ${\rm mod} \, A$ is non-zero, for any artin algebra $A$. This result was first proved by R. Bautista and S. Smal{\o} in 1983, see \cite{BSm}. Moreover, in \cite{IT} H. Igusa and G. Todorov also studied sectional paths  in ${\rm mod}\, A$ and  proved that the composition of $m$ irreducible morphisms on a  sectional path  does not belong  to the power $m+1$ of the radical of the category $\mbox{mod} \, A$ and as a consequence does not vanish.

On the other hand, in \cite{CPS1}, the authors analyzed this problem for a category of complexes of fixed size. They  proved  that the composition of irreducible morphisms on a sectional path in  $\mathbf{C}_{[1, n]}({\rm proj}\,A)$ can be zero and they studied conditions under which such compositions do not vanish. Precisely, the authors proved that if  $H$  is a finite dimensional hereditary algebra and $X_0 \rightarrow X_1  \rightarrow  \cdots \rightarrow X_m$ is a sectional path in $\mathbf{C}_{[1, n]}(\mbox{proj} \, H)$ then the composition of the irreducible morphisms in a sectional path does not vanish.

In this section, we shall prove that we have a similar result than the one mentioned above for the composition of irreducible morphisms on a sectional path in $\mathbf{C}_{\equiv m}(\mbox{proj} \, A)$,  whenever $A$ is a finite dimensional  $k$-algebra.
\vspace{.1in}

We recall the notion of sectional path.

\begin{defi}
A path $X_0\longrightarrow X_1  \longrightarrow  \cdots \longrightarrow X_m$  of irreducible morphisms between
indecomposable complexes in  $\mathbf{C}_{\equiv m}(\emph{proj} \, A)$ is sectional if
$\tau_{ \mathbf{C}_{\equiv m}}^{-1}X_i\not \simeq X_{i+2}$ for every $i=0, \dots, m-2$.
\end{defi}

S. Liu studied the behaviour of sectional paths  in a left or  a right Auslander-Reiten category, $\mathcal{A}$. Precisely, the mentioned author  proved that any sectional path  $X_0\stackrel{f_1}\rightarrow X_1  \stackrel{f_2}\rightarrow  \cdots \stackrel{f_m}\rightarrow X_m$ in these categories verify  that $f_m \cdots f_1  \notin \Re^{m+1}_{\mathcal{A}}(X_0,X_m)$, see   \cite[Lemma 2.7]{L}.
\vspace{.1in}

In \cite{CPS1}, the authors proved  that  $\mathbf{C}_{[1, n]}({\rm proj}\, H)$ is a left  Auslander-Reiten category if and only if $H$ is hereditary. Therefore, as an immediate consequence they got that the composition of irreducible morphisms in a sectional path in $\mathbf{C}_{[1, n]}({\rm proj}\, H)$ is always non-zero.
\vspace{.1in}

Next, we shall see that $\mathbf{C}_{\equiv m}(\mbox{proj} \, H)$ is a left  Auslander-Reiten category, whenever $H$ is a finite dimensional hereditary algebra.
\vspace{.1in}

We recall the following definitions from  \cite{L}.

\begin{defi} An object $X$ in a Krull-Schmidt category $\mathcal{A}$
is called pseudo-projective if there exists a right minimal almost split (sink)
monomorphism $M \rightarrow X$, and dually, it is called  pseudo-injective
if there exists a left minimal almost split (source) epimorphism $X \rightarrow N$.
\end{defi}

\begin{defi} A Krull-Schmidt category $\mathcal{A}$ is called
a left Auslander Reiten category if each indecomposable
object in $\mathcal{A}$ is either pseudo-projective or the
end-term of an almost split sequence.  $\mathcal{A}$ is called
a right
Auslander-Reiten category  if each indecomposable object in
$\mathcal{A}$ is either pseudo-injective or the starting term of
an almost split sequence. $\mathcal{A}$ is called
 an Auslander-Reiten
category if it is a left and a right Auslander-Reiten category.
\end{defi}

\begin{prop}\label{hereditary1} If $H$ is a finite dimensional hereditary algebra over a field $k$, then  $\mathbf{C}_{\equiv m}(\emph{proj} \, H)$ is a left  Auslander-Reiten category.

Conversely, if $\mathbf{C}_{\equiv m}(\emph{proj} \, H)$ is a left  Auslander-Reiten category for $m>2$, then $H$ is a hereditary algebra.
\end{prop}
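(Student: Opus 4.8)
The plan is to reduce the biconditional to a single statement about the sink maps (minimal right almost split morphisms) into the indecomposable projective\nobreakdash-injective objects of $\C_{\equiv m}(\mbox{proj}\, H)$, and then to move this statement back and forth across the Galois $G$\nobreakdash-covering $\mathcal{F}_m$ using the known description of $\C_{[1,n]}(\mbox{proj}\, H)$ from \cite{CPS1}. First I would record the following reduction, valid for every $m\geq 2$ whenever the ambient category has almost split sequences. Since $\mbox{proj}\, H$ is dualizing and, for $H$ hereditary, $\mbox{mod}(\mbox{proj}\, H)$ has finite global dimension, $\C_{\equiv m}(\mbox{proj}\, H)$ is a Frobenius category with almost split sequences and $\mathcal{F}_m$ is a Galois $G$\nobreakdash-covering (Theorem~\ref{dense1}). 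Consequently every indecomposable that is \emph{not} projective\nobreakdash-injective is automatically the end\nobreakdash-term of an almost split sequence, while no indecomposable projective\nobreakdash-injective can be such an end\nobreakdash-term; hence $\C_{\equiv m}(\mbox{proj}\, H)$ is left Auslander\nobreakdash-Reiten \emph{if and only if} each indecomposable projective\nobreakdash-injective is pseudo\nobreakdash-projective, i.e.\ its sink map is a monomorphism. By Proposition~\ref{K_P} these objects are, up to shift, the complexes $K_P$ with $P$ indecomposable projective; since the category is closed under shifts, it suffices to treat $K_P$.

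For the forward implication I would exploit that $H$ hereditary forces $s.gl.\mbox{dim}\, H=1$, so $\nu=1$ and the transport of almost split sequences lands in $\C_{[1,2]}(\mbox{proj}\, H)$. Recall from the proof of Theorem~\ref{stdense1} that $K_P=\mathcal{F}_m(X_P)$ for $X_P\colon P\xrightarrow{\mbox{id}} P$ in $\C_{[1,2]}(\mbox{proj}\, H)$, the indecomposable projective\nobreakdash-injective there. In a Frobenius category the sink map $\mbox{rad}\, K_P\to K_P$ is the $K_P$\nobreakdash-component of the starting morphism of the almost split sequence whose middle term contains $K_P$; by Theorem~\ref{nu2} this almost split sequence is the image under $\mathcal{F}_m$ of the shift of an almost split sequence in $\C_{[1,2]}(\mbox{proj}\, H)$, and since $\mathcal{F}_m$ preserves and reflects indecomposable summands (Proposition~\ref{ind}, Theorem~\ref{Gcov}) the latter sequence has $X_P$ in its middle. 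Thus the sink map of $K_P$ equals, up to shift, $\mathcal{F}_m$ applied to the sink map $\sigma\colon R\to X_P$ of $X_P$ in $\C_{[1,2]}(\mbox{proj}\, H)$. Now \cite{CPS1} gives that $\C_{[1,2]}(\mbox{proj}\, H)$ is left Auslander\nobreakdash-Reiten (because $H$ is hereditary), so $\sigma$ is a monomorphism; and because $m\geq 2=\nu+1$, Proposition~\ref{m>n} realizes $\mathcal{F}_m$ on $\C_{[1,2]}(\mbox{proj}\, H)$ with image $\mathcal{X}_{2,m}'$, where the periodic components of $\mathcal{F}_m(\sigma)$ are literally copies of the components of $\sigma$ (the rest being zero), so $\mathcal{F}_m(\sigma)$ is again a monomorphism. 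Hence every $K_P$ is pseudo\nobreakdash-projective and $\C_{\equiv m}(\mbox{proj}\, H)$ is left Auslander\nobreakdash-Reiten.

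For the converse I would argue contrapositively, using the hypothesis $m>2$. If $H$ is not hereditary, then by \cite{CPS1} the category $\C_{[1,3]}(\mbox{proj}\, H)$ is not left Auslander\nobreakdash-Reiten; by the reduction above the only possible obstruction is an indecomposable projective\nobreakdash-injective $X_P=(P\xrightarrow{\mbox{id}} P)$ whose sink map in $\C_{[1,3]}(\mbox{proj}\, H)$ fails to be a monomorphism, and the source of that sink map has size at most three. Since $m>2$ gives $m\geq 3$, Proposition~\ref{m>n} furnishes the fully faithful embedding $\C_{[1,3]}(\mbox{proj}\, H)\hookrightarrow\C_{\equiv m}(\mbox{proj}\, H)$ onto $\mathcal{X}_{3,m}'$, and I would transport the sink map of $X_P$ to identify it with the sink map of $K_P$ in $\C_{\equiv m}(\mbox{proj}\, H)$; as $\mathcal{F}_m$ reflects monomorphisms on $\C_{[1,3]}(\mbox{proj}\, H)$ (again Proposition~\ref{m>n}, since components are copies), the sink map of $K_P$ is not a monomorphism. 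Thus $K_P$ is neither pseudo\nobreakdash-projective nor an end\nobreakdash-term of an almost split sequence, so $\C_{\equiv m}(\mbox{proj}\, H)$ is not left Auslander\nobreakdash-Reiten. The case $s.gl.\mbox{dim}\, H=\infty$, where $\mathcal{F}_m$ is no longer dense, I would treat separately by directly exhibiting a periodic indecomposable that is neither pseudo\nobreakdash-projective nor an end\nobreakdash-term.

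I expect the main obstacle to be the faithful transport of sink maps through $\mathcal{F}_m$: precisely, the verification that the sink map of $K_P$ in $\C_{\equiv m}(\mbox{proj}\, H)$ coincides with the $\mathcal{F}_m$\nobreakdash-image of the sink map of $X_P$ computed in the appropriate fixed\nobreakdash-size category, together with the preservation (for the direct implication) and reflection (for the converse) of the monomorphism property. The delicate point is that being right almost split must be tested against \emph{all} objects of $\C_{\equiv m}(\mbox{proj}\, H)$, including those lying outside the image $\mathcal{X}_{n,m}'$; here one combines the density of Theorem~\ref{dense} with the precovering isomorphism of Proposition~\ref{precov} to reduce every factorization to one in a fixed\nobreakdash-size category. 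The asymmetry between the hypotheses $m\geq 2$ (direct implication, where Theorem~\ref{nu2} pulls everything back to $\C_{[1,2]}$) and $m>2$ (converse, where one needs room for length\nobreakdash-three sources) also requires care, since for $m=2$ the additional periodic identifications can render a sink map accidentally monomorphic.
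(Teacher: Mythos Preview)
Your overall strategy---reduce both directions to the question of whether the sink map into each indecomposable projective-injective $K_P$ is a monomorphism---is exactly the reduction the paper uses. The execution, however, differs in both halves.

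For the forward implication the paper is considerably more direct: it does not route through Theorem~\ref{nu2} or \cite{CPS1}. Instead it invokes \cite[Proposition~2.6]{CD} to dispose of the non-projective indecomposables, then observes that for $H$ hereditary the sink map $\mbox{rad}\,Z\to Z$ in $\C^b(\mbox{proj}\,H)$ is already a monomorphism (because $\mbox{pd}\,P/\mbox{rad}\,P\leq 1$), and finally pushes this forward with $\mathcal{F}_m$ using the Galois-covering machinery from \cite{BL}. Your detour through the almost split sequence containing $K_P$ in its middle, and then through $\C_{[1,2]}$, works but is unnecessary.

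For the converse the difference is more substantial. You want to import from \cite{CPS1} the failure of left Auslander--Reiten in $\C_{[1,3]}(\mbox{proj}\,H)$ and transport it via the embedding of Proposition~\ref{m>n}; this leaves you with the genuine problem (which you correctly flag) of identifying the transported map with the actual sink map in $\C_{\equiv m}$, plus the separate treatment of $s.gl.\mbox{dim}\,H=\infty$ that you do not carry out. The paper bypasses all of this by an explicit construction: it writes down the candidate sink map $\rho'\colon(R^{-s}\to\cdots\to R^{-1}\to P)\to(0\to\cdots\to P\to P)$ coming from a minimal projective resolution of $P/\mbox{rad}\,P$, and exhibits a concrete nonzero morphism $\alpha$ from a stalk complex concentrated in $R^{-2}$ with $\rho'\alpha=0$. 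Applying $\mathcal{F}_m$ then shows $\mathcal{F}_m(\rho')$ is not a monomorphism, and since this \emph{is} the sink map of $K_P$, the object $K_P$ is not pseudo-projective. The explicit kernel element replaces your appeal to \cite{CPS1}, and the argument does not visibly depend on density or finite strong global dimension. Your approach would ultimately need to reproduce essentially this computation to close the gaps you identify; the paper simply does it from the start.
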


\begin{proof} Let $H$ be a hereditary algebra. Then  s.gl.$\mbox{dim} \,H < \infty$. Consider $X$ an indecomposable complex  in $\mathbf{C}_{\equiv m}(\mbox{proj} \, H)$.

If $X$  is not projective then by \cite[Proposition 2.6]{CD} $X$ is the end-term of an almost split sequence in $\mathbf{C}_{\equiv m}(\mbox{proj} \, H)$. Otherwise, if $X$ is projective since the compression functor $\mathcal{F}_m: \C^{b}(\mbox{proj} \, H) \rightarrow \mathbf{C}_{\equiv m}(\mbox{proj} \, H)$ is dense because of  Theorem  \ref{dense}, then there is a complex $Z \in \C^{b}(\mbox{proj} \, H)$ such that $\mathcal{F}_m(Z)=X$. Furthermore, $Z$ is projective, because by \cite{PX} the functor $\mathcal{F}_m$ sends projective complexes in $\C^{b}(\mbox{proj} \, H)$ into projective complexes in $\mathbf{C}_{\equiv m}(\mbox{proj} \, H)$. Since $\rho: \mbox{rad}(Z)\rightarrow Z$ is a left almost split (sink) morphism ending in $Z$ then $\mathcal{F}_m(\rho):\mathcal{F}_m(\mbox{rad}(Z))\rightarrow \mathcal{F}_m(Z)$  is a left almost split (sink) morphism ending in $X$. Since $H$ is hereditary then the above sink morphisms are monomorphisms, because for every $P\in {\rm proj}\, H$ we have that the projective dimension of  $P/{\rm rad}\,P$ is less than or equal to one.

Conversely. Assume that $\mathbf{C}_{\equiv m}(\mbox{proj} \, H)$  is a left  Auslander-Reiten category  and $H$ is not hereditary. Then, there  exists an indecomposable projective $H$-module $P$  such that the projective dimension of $P/ {\rm rad} P$ is greater than one. Then we may assume that $$ R^{-s} \rightarrow  \dots \rightarrow R^{-2} \rightarrow R^{-1} \rightarrow P/ {\rm rad} P$$
\noindent is a minimal projective resolution for  $P/ {\rm rad} P$, with $R^{-1} \neq 0$ and $R^{-2}\neq 0$.

Consider the morphism $$\rho': (R^{-s} \rightarrow \dots \rightarrow R^{-2} \rightarrow R^{-1} \rightarrow P )\longrightarrow (0\rightarrow \dots \rightarrow 0 \rightarrow P \rightarrow P)$$
\noindent in $\C^b(\mbox{proj} \, H)$. Then we have that $\mathcal{F}_m(\rho')=\rho:\mbox{rad}(Z)\rightarrow Z $ if $Z= 0\rightarrow \dots \rightarrow 0 \rightarrow P \rightarrow P$.

 On the other hand, there exists a non-zero morphism $\alpha: (0\rightarrow \dots \rightarrow 0 \rightarrow R^{-2} \rightarrow 0 ) \longrightarrow (R^{-s}\rightarrow \dots \rightarrow R^{-2} \rightarrow R^{-1} \rightarrow P)$ in $\C^b(\mbox{proj} \, H)$ such that $\rho'\alpha =0$.
We illustrate the situation with the following diagram:

\begin{displaymath}
\def\objectstyle{\scriptstyle}
\def\labelstyle{\scriptstyle}
 \xymatrix {0 \ar[r] \ar[d] & \cdots \ar[r] & 0 \ar[r] \ar[d]    &    R^{-2}\ar[r]\ar[d]&  0 \ar[d] \\
 R^{-s}\ar[r]\ar[d]        & \cdots \ar[r] & R^{-2} \ar[r]\ar[d]& R^{-1}\ar[r]\ar[d]   & P \ar[d]\\
 0 \ar[r]                     &\cdots \ar[r]  & 0 \ar[r]           & P \ar[r]             & P  }
\end{displaymath}

\noindent  Then  $\mathcal{F}_m(\rho')\mathcal{F}_m(\alpha)=0$ with $\mathcal{F}_m(\alpha)$ non-zero. Therefore $\rho=\mathcal{F}_m(\rho')$ is not a sink monomorphism.

Moreover, $Z$ is not the ending of an almost split sequence, a contradiction to the assumption  that $\mathbf{C}_{\equiv m}(\mbox{proj} \, H)$ is a left Auslander-Reiten category. Therefore $H$ is hereditary.
\end{proof}

\begin{coro}\label{hereditary2} Let $H$  be a finite dimensional hereditary algebra and $X_0 \rightarrow X_1  \rightarrow  \cdots \rightarrow X_r$ be a sectional path in $\mathbf{C}_{\equiv m}(\emph{proj} \, H)$. If $f_i:X_{i-1} \rightarrow X_i$  are irreducible morphisms for $i=1, \cdots, r$
then $f_r \cdots f_1  \notin \Re^{r+1}_{\mathbf{C}_{\equiv m}(\emph{proj} \, H)}(X_0,X_r)$. In particular, the composition $f_r \cdots f_1 $ does not vanish.
\end{coro}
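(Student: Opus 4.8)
The plan is to derive the statement directly from Proposition \ref{hereditary1} together with S. Liu's analysis of sectional paths in \cite[Lemma 2.7]{L}. First I would record that $\mathbf{C}_{\equiv m}(\mbox{proj}\, H)$ is a Hom-finite Krull-Schmidt category, a property it inherits from $\mbox{proj}\, H$; this is exactly what is needed in order to speak of the radical $\Re$ of the category and of its powers. Since $H$ is hereditary, Proposition \ref{hereditary1} then tells us that $\mathbf{C}_{\equiv m}(\mbox{proj}\, H)$ is a left Auslander-Reiten category. This is the one substantive input, and it has already been established.

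With this in hand, the given path $X_0 \stackrel{f_1}\rightarrow X_1 \stackrel{f_2}\rightarrow \cdots \stackrel{f_r}\rightarrow X_r$ is, by hypothesis, a sectional path of irreducible morphisms between indecomposable objects of a left Auslander-Reiten category. I would then invoke \cite[Lemma 2.7]{L}, which applies verbatim to any left (or right) Auslander-Reiten category and asserts that the composition along a sectional path never lands in the $(r+1)$-st power of the radical. Applied here this yields
$$f_r \cdots f_1 \notin \Re^{r+1}_{\mathbf{C}_{\equiv m}(\mbox{proj}\, H)}(X_0, X_r),$$
which is precisely the asserted non-membership. For the final clause I would simply observe that the zero morphism lies in every power of the radical, so in particular $0 \in \Re^{r+1}_{\mathbf{C}_{\equiv m}(\mbox{proj}\, H)}(X_0, X_r)$; hence the displayed non-membership forces $f_r \cdots f_1 \neq 0$.

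Because the hereditarity hypothesis has already been fully exploited in Proposition \ref{hereditary1}, I do not expect a genuine obstacle here: the corollary is essentially a translation of Liu's general lemma into the present category. The only point requiring care is to check that the hypotheses of \cite[Lemma 2.7]{L} are met, namely that the ambient category is Krull-Schmidt and is a left Auslander-Reiten category; both are supplied by the preceding results, so no argument beyond citing Proposition \ref{hereditary1} and \cite{L} is needed.
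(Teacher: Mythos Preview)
Your proposal is correct and follows exactly the approach intended by the paper: the corollary is stated without proof precisely because the surrounding text already records Liu's result \cite[Lemma 2.7]{L} and Proposition \ref{hereditary1} supplies the left Auslander-Reiten property, so the statement is an immediate consequence. Your additional remarks about the Krull-Schmidt hypothesis and the observation that $0$ lies in every power of the radical only make explicit what the paper leaves implicit.
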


Now, we show an example where we can see that there are sectional paths in $\mathbf{C}_{\equiv m}(\mbox{proj} \, A)$ where the composition of the irreducible morphisms in it vanishes.

\begin{ej}
\emph{ Consider the Example \ref{ej1}. The path $$ (3,0,0,3 ,3) \stackrel{f}\rightarrow  (2,1,0,3,2) \stackrel{g}\rightarrow (1,1,0,0,1)$$ is clearly a sectional path since the end-points are projective-injective complexes and the  composition of the irreducible morphisms $f$ and $g$ vanishes.}
\end{ej}

The next lemma is useful to determined the shape of non-zero sectional paths in $\mathbf{C}_{\equiv m}(\mbox{proj} \, A)$, where s.gl.$\mbox{dim} \,A < \infty$.

\begin{lem} \label{J} Let $A$  be a finite dimensional $k$-algebra, with  s.gl.$\emph{dim} \,A < \infty$. If $X_0 \rightarrow X_1 \rightarrow \dots \rightarrow X_r$ is a sectional path in $\mathbf{C}_{\equiv m}(\emph{proj} \, A)$ with $X_i$ an indecomposable projective-injective complex for some $i \in \{0, \cdots, r\}$  then $i=0$ or $i=r$.
\end{lem}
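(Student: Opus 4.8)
The plan is to argue by contradiction: I would assume that $X_i$ is an indecomposable projective-injective complex with $0<i<r$ and derive a violation of the defining inequality $\tau^{-1}_{\mathbf{C}_{\equiv m}}X_{i-1}\not\simeq X_{i+1}$ of a sectional path (this is the instance of the definition at the interior vertex $X_{i-1}$, available precisely because $1\le i\le r-1$). Write $K=X_i$. By Proposition \ref{K_P} the complex $K$ is, up to shift, one of the $K_M$, and as an indecomposable projective-injective object it admits a minimal right almost split morphism $\mbox{rad}\,K\to K$ and a minimal left almost split morphism $K\to K/\mbox{soc}\,K$. Since the irreducible morphism $X_{i-1}\to K$ has indecomposable source and target, $X_{i-1}$ is a direct summand of $\mbox{rad}\,K$; dually, $X_{i+1}$ is a direct summand of $K/\mbox{soc}\,K$.

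The key step is to show that $\mbox{rad}\,K$ is indecomposable. First I would pass through the Galois $G$-covering ${\mathcal F}_m\colon \C^b(\mbox{proj}\,A)\to\mathbf{C}_{\equiv m}(\mbox{proj}\,A)$ of Theorem \ref{Gcov}, writing $K={\mathcal F}_m(J)$ where $J=(M\stackrel{id}{\to} M)$ is the corresponding indecomposable projective-injective complex of $\C^b(\mbox{proj}\,A)$, so that $M$ is an indecomposable projective $A$-module. Because ${\mathcal F}_m$ preserves minimal right almost split morphisms by \cite[Proposition 3.4]{BL}, it suffices to check that $\mbox{rad}\,J$ is indecomposable. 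As already used in the proof of Proposition \ref{hereditary1}, this radical is the truncation $\cdots\to R^{-2}\to R^{-1}\to M$ of the minimal projective resolution of the simple module $\mbox{top}(M)=M/\mbox{rad}\,M$, and it is a genuine object of $\C^b(\mbox{proj}\,A)$ since $s.gl.\mbox{dim}\,A<\infty$. This complex is indecomposable: its only nonzero cohomology is the simple module $\mbox{top}(M)$, so any direct sum decomposition would split $\mbox{top}(M)$, while minimality of the resolution forbids a nonzero contractible summand. By Proposition \ref{ind}, ${\mathcal F}_m$ preserves indecomposables, so $\mbox{rad}\,K={\mathcal F}_m(\mbox{rad}\,J)$ is indecomposable and hence $X_{i-1}\simeq\mbox{rad}\,K$.

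Finally I would invoke the almost split theory transported by the covering. Since $\mbox{rad}\,J$ has nonzero cohomology it is not contractible, so $\mbox{rad}\,K$ is not injective, and there is an almost split sequence $\,\mbox{rad}\,K\to K\oplus L\to K/\mbox{soc}\,K\,$ in $\mathbf{C}_{\equiv m}(\mbox{proj}\,A)$ with $K$ a summand of the middle term; this is exactly the image under ${\mathcal F}_m$ of the almost split sequence attached to the projective-injective $J$ (Theorem \ref{ass-conv}). It yields $\tau^{-1}_{\mathbf{C}_{\equiv m}}(\mbox{rad}\,K)=K/\mbox{soc}\,K$, which is therefore indecomposable; hence $X_{i+1}\simeq K/\mbox{soc}\,K=\tau^{-1}_{\mathbf{C}_{\equiv m}}(\mbox{rad}\,K)=\tau^{-1}_{\mathbf{C}_{\equiv m}}X_{i-1}$, contradicting the sectional hypothesis. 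Thus no interior vertex can be projective-injective, and $i=0$ or $i=r$.

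The main obstacle is the indecomposability of $\mbox{rad}\,K$: once this is secured, the remainder is formal Auslander-Reiten theory pushed through the Galois covering. The subtle point is that $\mbox{rad}\,K$ remains indecomposable even when $\mbox{rad}\,M$ is a decomposable module, because the differential of the resolution glues the summands through the indecomposable term $M$; making this precise (equivalently, checking that the sink map into $K$ has indecomposable source) is where care is needed, and the computation $\mbox{rad}\,K_{P_1}=(2,1,0,3,2)$ underlying Example \ref{ej1} is a concrete instance of this phenomenon.
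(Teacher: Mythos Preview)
Your proof is correct and follows essentially the same strategy as the paper's: both lift the projective-injective $K=X_i$ through the Galois covering ${\mathcal F}_m$, establish that its radical is indecomposable (the paper cites \cite[Proposition 8.7]{BSZ} for this, while you argue directly via the simple cohomology and minimality of the resolution), and then use the resulting almost split sequence to force $X_{i-1}\simeq\mbox{rad}\,K$ and $X_{i+1}\simeq\tau^{-1}_{\mathbf{C}_{\equiv m}}X_{i-1}$, contradicting sectionality at an interior vertex. Your presentation is in fact somewhat more streamlined, since the paper additionally invokes \cite[Proposition 4.7]{CPS1} to obtain that the irreducible morphisms into and out of $K$ have dimension one, a step made redundant once indecomposability of $\mbox{rad}\,K$ is known.
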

\begin{proof} Consider $X_0 \rightarrow X_1 \rightarrow \dots \rightarrow X_r$ a sectional path in $\mathbf{C}_{\equiv m}(\mbox{proj} \, A)$ with $X_i$ an indecomposable projective-injective complex for some $i \in \{0, \cdots, r\}$. Let $X_j=J_{i_{0}}(P)$ be the projective-injective complex, where $P$ is an indecomposable projective $A$-module. Since  s.gl.$\mbox{dim} \,A < \infty$ then  the compression functor ${\mathcal F}_m: \C^{b}(\mbox{proj} \,A) \rightarrow \mathbf{C}_{\equiv m}(\mbox{proj} \,A)$ is dense. Moreover, ${\mathcal F}_m$ preserves indecomposable complexes,  projective  complexes and irreducible morphisms. Then we have that $Z_0 \rightarrow Z_1 \rightarrow \dots \rightarrow Z_r$ is a path in $\C^{b}(\mbox{proj} \,A)$, where ${\mathcal F}_m(Z_i)=X_i$ for $i=0, \dots, r$. Moreover, $Z_j$ is a projective-injective complex in $\C^{b}(\mbox{proj} \, A)$. By Theorem \ref{nu} the above path is sectional in $\C^{b}(\mbox{proj} \,A)$.

Now, by \cite[Proposition 8.7]{BSZ}, we know that  $\mbox{rad}(Z_j)=R_{i_{0}}(P)$ is an indecomposable complex in $\C_{[1,n]} (\mbox{proj} \,A)$, for $n$ arbitrarily large.  That is, we consider  $n$ such that the paths of irreducible morphisms $\mbox{rad}(Z_j) \rightarrow Z_j \rightarrow \tau_{\C^{b}(\mbox{proj} \,A)}^{-1} Z_{j-1}$ belongs to  $\C_{[1,n]}(\mbox{proj} \, A)$,  where $Z_{j}$ is the projective-injective complex and $Z_{j-1}=\mbox{rad}(Z_j)$.   Therefore, $\mbox{rad}(Z_j)$ is also indecomposable in $\C^{b}((\mbox{proj} \,A))$  and by Proposition \ref{ind},  ${\mathcal F}_m (\mbox{rad}(Z_j))$ is an indecomposable complex in $\mathbf{C}_{\equiv m}(\mbox{proj} \,A)$.

On the other hand, $\mbox{rad}(Z_j) \rightarrow  Z_j$  is irreducible in $\C^{b} (\mbox{proj} \,A)$.
Therefore, applying the compression functor we have that ${\mathcal F}_m (\mbox{rad}(Z_j)) \rightarrow  {\mathcal F}_m (Z_j)$ is irreducible in $\mathbf{C}_{\equiv m}(\mbox{proj} \, A)$.

Moreover, by \cite[Proposition 4.7]{CPS1}, the irreducible morphism  $\mbox{rad}(Z_j) \rightarrow  Z_j$ in $\C_{[1,n]} (\mbox{proj} \,A)$ has dimension one.
Dually, we can see that $Z_j \rightarrow  \tau_{\C^{b}}^{-1} Z_{j-1}$ in $\C_{[1,n]} (\mbox{proj} \,A)$ where $ Z_{j-1}=\mbox{rad}(Z_j)$, has dimension one. Hence,  the morphism $\mbox{rad}(Z_j) \rightarrow  Z_j$  has the same property in $\C^{b}(\mbox{proj} \,A)$. Applying the compression functor we have that ${\mathcal F}_m (\mbox{rad}(Z_j)) \rightarrow  {\mathcal F}_m (Z_j)$ has dimension one in $\mathbf{C}_{\equiv m}(\mbox{proj} \, A)$.  Dually, we can see that $ {\mathcal F}_m (Z_j) \rightarrow   {\mathcal F}_m ( \tau_{\C^{b}}^{-1} Z_{j-1})$ in $\C_{\equiv m} (\mbox{proj} \,A)$ has dimension one.

Now, assume that $j\neq r$.  If  $X_{r-1}$ is  projective-injective  then the path $X_{r-2} \rightarrow X_{r-1} \rightarrow X_r$ is not sectional, since $X_{r-2}=\mbox{rad}(X_{r-1})$, and $X_r = \tau_{\C{\equiv m}}^{-1}X_{r-2}$, an absurdly. Thus, $i \neq  r-1$.

A similar proof can be done for any $i \in \{1, \dots, r-2\}$. Since by hypothesis there is an $X_i$  projective-injective then we have that $i=0$, proving the result.
\end{proof}

\begin{prop}\label{nonu}
Let $A$ be a finite dimensional  $k$-algebra with s.gl.$\emph{dim} \,A < \infty$. Let $X_0 \rightarrow X_1 \rightarrow \dots \rightarrow X_r$  be a non-zero sectional path in $\mathbf{C}_{\equiv m}(\emph{proj} \, A)$ with $m > 1$. Then, one of the following statements hold.
\begin{itemize}
\item [(a)] The complexes $X_1, X_2, \cdots, X_{r}$ are not projective-injective.
\item [(b)] The complexes $X_0, X_1, \cdots, X_{r-1}$ are not projective-injective.
\item [(c)] $X_0$ and $X_r$  are projective-injective complexes and $X_1, \cdots, X_{r-1}$ are not projective-injective.
\end{itemize}
\end{prop}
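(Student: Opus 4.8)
The plan is to obtain the trichotomy as an organized case analysis built on Lemma \ref{J}, which already carries the essential structural content. Lemma \ref{J} tells us that in any sectional path an indecomposable projective-injective complex can occur only at the two extreme positions $0$ and $r$; consequently the real work is not in the interior of the path but only in deciding the status of the two endpoints $X_0$ and $X_r$.

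First I would apply Lemma \ref{J} to each interior index $i \in \{1, \dots, r-1\}$: were some such $X_i$ projective-injective, Lemma \ref{J} would force $i \in \{0, r\}$, a contradiction. Hence every interior term $X_1, \dots, X_{r-1}$ is non-projective-injective, and this single remark reduces the statement to the status of the two endpoints $X_0$ and $X_r$. I would then distinguish the following mutually exhaustive cases.
\begin{itemize}
\item If $X_0$ is not projective-injective, then $X_0, X_1, \dots, X_{r-1}$ are all non-projective-injective (the endpoint by hypothesis, the interior terms by the previous step), which is exactly statement (b).
\item If $X_0$ is projective-injective but $X_r$ is not, then $X_1, \dots, X_r$ are all non-projective-injective (the interior terms together with $X_r$), which is statement (a).
\item If both $X_0$ and $X_r$ are projective-injective, then combined with the interior terms being non-projective-injective this is precisely statement (c).
\end{itemize}
Since these three possibilities are exhaustive --- and when neither endpoint is projective-injective both (a) and (b) in fact hold --- the proposition follows.

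The point I would be most careful about is simply to confirm that the hypotheses of Lemma \ref{J} remain in force: that the $X_i$ are indecomposable, which is part of the definition of a sectional path, and that $\mathbf{C}_{\equiv m}(\mbox{proj}\, A)$ carries the almost split structure used there, which is guaranteed by $s.gl.\mbox{dim}\, A < \infty$ through Theorem \ref{dense1} and Theorem \ref{nu2}. I do not expect a genuine obstacle here, since the heavy lifting --- ruling out interior projective-injectives via sectionality --- was already carried out in Lemma \ref{J}. Finally, I would observe that the non-zero hypothesis on the path is not what drives the trichotomy, because Lemma \ref{J} excludes interior projective-injectives for arbitrary sectional paths; rather, it is the assumption that ties this classification to the non-vanishing analysis developed in the surrounding results, and it is retained for that purpose.
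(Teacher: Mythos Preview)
Your proposal is correct and follows essentially the same approach as the paper: both arguments use Lemma~\ref{J} to conclude that no interior term of the sectional path can be projective-injective, and then perform a routine case analysis on the two endpoints $X_0$ and $X_r$. Your organization is in fact slightly cleaner than the paper's (which frames the case split via the least index $i$ with $X_i$ projective-injective), and your observation that the ``non-zero'' hypothesis plays no role in deriving the trichotomy itself is accurate.
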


\begin{proof} Let $X_0 \rightarrow X_1 \rightarrow \dots \rightarrow X_r$ be a non-zero sectional path in $\mathbf{C}_{\equiv m}(\mbox{proj} \, A)$ with $m > 1$. If there are not projective-injective complexes in such a path then condition (a)  and (b) hold.

Assume that there is an  $i \in \{0, \dots, r\}$ such that $X_i$ is  projective-injective. Furthermore, let   $i \in \{0, \dots, m\}$
be the least integer such that $X_i$ is projective-injective. By Lemma \ref{J} we have that $i=0$ or $i=r$.

If $i =0$ then $X_0$ is projective-injective and for $j>0$ we have the following two situations because of Lemma \ref{J}: the path is of the form $X_0 \rightarrow \dots \rightarrow X_r$, where $X_j$ are not projective-injective for $j=1, \dots, r-1$ and $X_r$ is projective-injective; or
of the form  $X_0 \rightarrow \dots \rightarrow X_r$ with $X_1, \dots, X_r$ not projective-injective. Then conditions (c) and (a) hold, respectively. In case that $i=r$ then $X_0, \cdots, X_{r-1}$ are not projective-injective and (b) holds.
\end{proof}

\end{document}